\def\bsg{{\boldsymbol{g}}}
\def\bsu{{\boldsymbol{u}}}
\def\bsv{{\boldsymbol{v}}}
\def\bsx{{\boldsymbol{x}}}
\def\bsH{{\boldsymbol{H}}}
\def\bsK{{\boldsymbol{K}}}
\def\bsL{{\boldsymbol{L}}}
\def\bsM{{\boldsymbol{M}}}
\def\bsQ{{\boldsymbol{Q}}}
\def\calA{{\mathcal{A}}}
\def\calF{{\mathcal{F}}}
\def\calG{{\mathcal{G}}}
\definecolor{gray}{RGB}{128,128,128}
\newcolumntype{M}[1]{>{\centering\arraybackslash}m{#1}}
\newcolumntype{N}{@{}m{0pt}@{}}
\newtheorem{theorem}{Theorem}
\newtheorem{assumption}{Assumption}
\newtheorem{lemma}{Lemma}
\newtheorem{corollary}{Corollary}
\newtheorem{remark}{Remark}
\DeclareMathOperator{\nullrank}{null}
\DeclareMathOperator{\col}{col}
\DeclareMathOperator{\diag}{diag}
\newenvironment{proof}[1][Proof]%
  {\smallskip\par\noindent\textbf{#1\,:\ }}%
  {\hspace*{\fill} \rule{6pt}{6pt}\smallskip}
\newenvironment{proof*}[1][Proof]%
  {\smallskip\par\noindent\textbf{#1\,:\ }}%
\newlength{\fwidth}\setlength{\fwidth}{0.485\textwidth}%
\begin{document}
\IEEEoverridecommandlockouts
\title{A Primal--Dual SGD Algorithm  for\\ Distributed Nonconvex Optimization}
\author{Xinlei Yi, Shengjun Zhang, Tao Yang, Tianyou Chai, and Karl H. Johansson
\thanks{This work was supported by the
Knut and Alice Wallenberg Foundation, the  Swedish Foundation for Strategic Research, the Swedish Research Council, and the National Natural Science Foundation of China under grants 61991403, 61991404, and 61991400.}
\thanks{X. Yi and K. H. Johansson are with the Division of Decision and Control Systems, School of Electrical Engineering and Computer Science, KTH Royal Institute of Technology, 100 44, Stockholm, Sweden. {\tt\small \{xinleiy, kallej\}@kth.se}.}%
\thanks{S. Zhang is with the Department of Electrical Engineering, University of North Texas, Denton, TX 76203 USA. {\tt\small  ShengjunZhang@my.unt.edu}.}
\thanks{T. Yang and T. Chai are with the State Key Laboratory of Synthetical Automation for Process Industries, Northeastern University, 110819, Shenyang, China. {\tt\small \{yangtao,tychai\}@mail.neu.edu.cn}.}
}

\maketitle

\begin{abstract}                
The distributed nonconvex optimization problem of minimizing a global cost function formed by a sum of $n$ local cost functions by using local information exchange is considered.
This problem is an important component of many machine learning techniques with data parallelism, such as deep learning and federated learning.
We propose a distributed primal--dual stochastic gradient descent (SGD) algorithm,  suitable for arbitrarily connected communication networks and any smooth (possibly nonconvex) cost functions.
We show that the proposed algorithm achieves the linear speedup convergence rate $\mathcal{O}(1/\sqrt{nT})$ for general nonconvex cost functions and the linear speedup convergence rate $\mathcal{O}(1/(nT))$ when the global cost function satisfies the Polyak--{\L}ojasiewicz (P--{\L}) condition, where $T$ is the total number of iterations.
We also show that the output of the proposed algorithm with constant parameters linearly converges to a neighborhood of a global optimum.
We demonstrate through numerical experiments the efficiency of our algorithm in comparison with the baseline centralized SGD and recently proposed distributed SGD algorithms.

\emph{Index Terms}---Distributed nonconvex optimization, linear speedup, Polyak--{\L}ojasiewicz condition, primal--dual algorithm, stochastic gradient descent
\end{abstract}


\section{Introduction}
Consider a network of $n$ agents, each of which has a local smooth (possibly nonconvex) cost function $f_i: \mathbb{R}^{p}\rightarrow \mathbb{R}$.
All agents collaboratively solve the following optimization problem
\begin{align}\label{sgd:eqn:xopt}
 \min_{x\in \mathbb{R}^p} f(x)=\frac{1}{n}\sum_{i=1}^nf_i(x).
\end{align}
Each agent $i$ only has information about its local cost function $f_i$ and can communicate with its neighbors through the underlying communication network. The communication network is modeled by an undirected graph $\mathcal G=(\mathcal V,\mathcal E)$, where $\mathcal V =\{1,\dots,n\}$ is the agent set, $\mathcal E
\subseteq \mathcal V \times \mathcal V$ is the edge set, and $(i,j)\in \mathcal E$ if agents $i$ and $j$ can communicate with each other. The set $\mathcal{N}_i=\{j\in \mathcal V :~ (i,j)\in \mathcal E\}$ is the neighboring set of agent $i$.
The optimization problem \eqref{sgd:eqn:xopt} incorporates many popular machine learning approaches with data parallelism, such as deep learning \cite{dean2012large} and federated learning \cite{McMahan2017communication}.
A star graph is a special undirected graph, in which there is one and only one agent  (hub agent) that connects to all of the other agents (leaf agents) and each leaf agent only connects to the hub agent. Such a graph corresponds to the master/worker architecture adopted by many parallel learning algorithms.

In this paper, we consider the case where each agent is able to collect stochastic gradients of its local cost function and propose a distributed stochastic gradient descent (SGD) algorithm to solve  \eqref{sgd:eqn:xopt}.
In general, SGD algorithms are suitable for scenarios where explicit expressions of the gradients are unavailable or difficult to obtain.
For example, in some big data applications, such as empirical risk minimization, the actual gradient is to be calculated from the entire data set, which results in a heavy computational burden.
A stochastic gradient can be calculated from a randomly selected subset of the data and is often an efficient way to replace the actual gradient.
Other examples when SGD algorithms are suitable include scenarios where data are arriving sequentially such as in online learning \cite{langford2009sparse}.


\subsection{Literature Review}
When the communication network is a star graph, various parallel SGD algorithms have been proposed to solve \eqref{sgd:eqn:xopt}.
A potential performance bottleneck of such algorithms lies on the communication burden of the master.
To overcome this issue, a promising strand of research is combining parallel SGD algorithms with communication reduction approaches, e.g., asynchronous parallel SGD algorithms \cite{recht2011hogwild,de2015taming,lian2015asynchronous,lian2016Comprehensive,Zhou2018distributedas},
gradient compression based parallel SGD algorithms \cite{de2015taming,pmlr-v80-bernstein18a,
jiang2018linear,reisizadeh2019fedpaq,basu2019qsparse},
periodic averaging based parallel SGD algorithms \cite{jiang2018linear,reisizadeh2019fedpaq,wang2018adaptive,yu2019parallel,
haddadpour2019trading,Yu2019on,haddadpour2019local}, and parallel SGD algorithm with dynamic batch sizes \cite{Yu2019Computation}.
Convergence properties of these algorithms have been analyzed in detail.
In particular, in \cite{jiang2018linear,yu2019parallel,Yu2019on,Yu2019Computation}, an $\mathcal{O}(1/\sqrt{nT})$ convergence rate has been established for general nonconvex cost functions, where $T$ is the total number of iterations.
This rate is $n$ times faster than the well known $\mathcal{O}(1/\sqrt{T})$ convergence rate established by SGD over a single agent, and thus a linear speedup in the number of agents is achieved.
In \cite{haddadpour2019local,Yu2019Computation}, the  convergence rate has been improved to $\mathcal{O}(1/(nT))$ when the global cost function satisfies the P--{\L} condition, which also achieves a linear speedup.
In addition to the star architecture restriction, aforementioned parallel SGD algorithms require certain restrictions on the cost functions, such as bounded gradients of the local cost functions or bounded difference between the gradients of the local and global cost functions.

Distributed algorithms executed over arbitrarily connected communication networks have been suggested to overcome communication bottlenecks for parallel SGD algorithms.
Various distributed SGD algorithms have been proposed to solve \eqref{sgd:eqn:xopt}, e.g., synchronous distributed SGD algorithms \cite{Yu2019on,jiang2017collaborative,lian2017can,george2019distributed},
asynchronous distributed SGD algorithms \cite{pmlr-v80-lian18a,Assran2019Stochastic},
compression based distributed SGD algorithms \cite{tang2018communication,reisizadeh2019robust,taheri2020quantized,Singh2020Communication},
and periodic averaging based distributed SGD algorithm \cite{wang2018cooperative}.
Convergence properties of these algorithms have been analyzed and the linear speedup convergence rate $\mathcal{O}(1/\sqrt{nT})$ has been established for general nonconvex cost functions \cite{Yu2019on,lian2017can,Assran2019Stochastic,tang2018communication,taheri2020quantized,
Singh2020Communication,wang2018cooperative}.
However, similar to aforementioned parallel SGD algorithms, these distributed algorithms require  restrictive assumptions on the cost functions.
In order to remove these restrictions, the authors of \cite{Tang2018Decentralized} proposed a variant of the distributed SGD algorithm proposed in \cite{lian2017can}, named $\mathrm{D}^2$, in which each agent stores the stochastic gradient and its local model in last iteration and linearly combines them with the current stochastic gradient and local model. For this algorithm the authors established the linear speedup convergence rate $\mathcal{O}(1/\sqrt{nT})$, but they required that the eigenvalues of the mixing matrix associated with the communication network  are strictly greater than $-1/3$.
The authors of \cite{lu2019gnsd,zhang2019decentralized} proposed  distributed stochastic gradient tracking algorithms suitable for arbitrarily connected communication networks.
However, these algorithms only achieve $\mathcal{O}(1/\sqrt{T})$ convergence rate, which is not a speedup.
Moreover, gradient tracking algorithms have the common potential drawback that in order to track the global gradient,
at each iteration each agent needs to communicate one additional $p$-dimensional variable with its neighbors.
This results in heavy communication  burden when $p$ is large.
Note that all aforementioned distributed SGD algorithms converge to stationary points, which may be local or global optima, or saddle points.
None of existing studies on distributed SGD algorithms consider finding the global optimum when the global cost function satisfies some additional property, such as the P--{\L} condition studied for the parallel algorithms in  \cite{haddadpour2019local,Yu2019Computation}. Noting above, two core theoretical questions arise.

(Q1) Are there any distributed SGD algorithms such that they not only are suitable for arbitrarily connected communication networks and any smooth cost functions but also find stations points with the linear speedup convergence rate $\mathcal{O}(1/\sqrt{nT})$?

(Q2) If the P--{\L} condition holds in addition, can the above SGD algorithms find the global optimum with the linear speedup convergence rate $\mathcal{O}(1/(nT))$?

\subsection{Main Contributions}
This paper provides positive answers for the above two questions. More specifically, the contributions of this paper are summarized as follows.

(i) We propose a distributed primal--dual SGD algorithm to solve the optimization problem~\eqref{sgd:eqn:xopt}.
In the proposed algorithm, each agent maintains the primal and dual variable sequences and only communicates the primal variable with its neighbors.
This algorithm is suitable for arbitrarily connected communication networks and any smooth (possibly nonconvex) cost functions.

(ii) We show in Corollary~\ref{sguT:coro-sg-smT} that our algorithm finds a stationary point with the linear speedup convergence rate $\mathcal{O}(1/\sqrt{nT})$ for general nonconvex cost functions, thus answers (Q1). Compared with \cite{jiang2018linear,yu2019parallel,Yu2019on,Yu2019Computation,lian2017can,
Assran2019Stochastic,tang2018communication,taheri2020quantized,Singh2020Communication,
wang2018cooperative,Tang2018Decentralized}, we achieve the same convergence rate but under weaker assumptions related to network architectures and/or cost functions, and compared with \cite{lu2019gnsd,zhang2019decentralized}, we not only establish linear speedup but also just use half communication in each iteration.

(iii) We show in Theorem~\ref{sgu:thm-sg-diminishingt} that our algorithm finds a global optimum with the linear speedup convergence rate $\mathcal{O}(1/(nT))$   when the global cost function satisfies the P--{\L} condition,  thus answers (Q2).  Compared with \cite{haddadpour2019local,Yu2019Computation,stich2018local,Koloskova2019decentralized,
Singh2020Communication,olshevsky2019non} , we achieve the same convergence rate but under weaker assumptions related to network architectures and/or cost functions, and compared with \cite{reisizadeh2019fedpaq,jiang2017collaborative,rabbat2015multi,lan2018communication,
yuan2018optimal,jakovetic2018convergence,fallah2019robust}, we not only establish linear speedup but also relax the strong convexity by the P--{\L} condition.

(iv) We show in Theorems~\ref{sg:thm-sg-fixed-a5} and \ref{sg:thm-sg-fixed} that the output of our algorithm with constant parameters linearly converges to a neighborhood of a global optimum when the global cost function satisfies the P--{\L} condition.
Compared with \cite{pu2018swarming,jiang2017collaborative,fallah2019robust,pu2018distributed,xin2019distributed}, which used the strong convexity assumption, we achieve the similar convergence result under weaker assumptions on the cost function.

The comparison of this paper with other related studies in the literature is summarized in Table~\ref{sgd:table}.
\begin{table*}[htbp]
\caption{Comparison of this paper to some related works. }
\label{sgd:table}
\vskip -1in
\begin{center}
\begin{tiny}
\scalebox{1}{
\begin{tabular}{M{0.9cm}|M{1.0cm}|M{1.9cm}|M{1.9cm}|M{2.2cm}|M{1.8cm}|M{4.8cm}N}
\hline
Reference&Problem type&Extra assumption&Communication network&Communicated variable&Communication rounds&Convergence rate&\\[7pt]

\hline
\cite{jiang2018linear}&Nonconvex&Bounded $\|\nabla f_i-\nabla f\|$&Star graph&One quantized variable &$\mathcal{O}(n^{5/4}T^{3/4})$&$\mathcal{O}(1/\sqrt{nT})$&\\[9pt]

\hline
\multirow{2}{*}[-4.5pt]{\cite{reisizadeh2019fedpaq}}& Nonconvex& \multirow{2}{*}[-4.5pt]{Identical $\nabla f_i$}
& \multirow{2}{*}[-4.5pt]{Star graph}& \multirow{2}{*}[-4.5pt]{\parbox{2.2cm}{\centering One quantized variable}}& \multirow{2}{*}[-4.5pt]{$\mathcal{O}(T)$}&$\mathcal{O}(1/\sqrt{T})$&\\[9pt]
\cline{2-2}\cline{7-7}
&Strongly convex& & & & &$\mathcal{O}(1/T)$&\\[9pt]

\hline
\cite{yu2019parallel}&Nonconvex&Bounded $\|\nabla f_i\|$&Star graph&One full-information variable &$\mathcal{O}(n^{3/4}T^{3/4})$&$\mathcal{O}(1/\sqrt{nT})$&\\[9pt]

\hline
\multirow{2}{*}[-4.5pt]{\cite{Yu2019on}}&  \multirow{2}{*}[-4.5pt]{Nonconvex}& \multirow{2}{*}[-4.5pt]{\parbox{1.9cm}{\centering Bounded $\|\nabla f_i-\nabla f\|$}}
&Star graph& \multirow{2}{*}[-4.5pt]{\parbox{2.2cm}{\centering Two full-information variables}}& $\mathcal{O}(n^{3/4}T^{3/4})$&\multirow{2}{*}[-4.5pt]{$\mathcal{O}(1/\sqrt{nT})$}&\\[9pt]
\cline{4-4}\cline{6-6}
& & &Connected graph & & $\mathcal{O}(T)$&&\\[9pt]

\hline
\cite{haddadpour2019local}&P--{\L} condition&Identical $\nabla f_i$&Star graph&One full-information variable &$\mathcal{O}((nT)^{1/3})$&$\mathcal{O}(1/(nT))$&\\[9pt]

\hline
\multirow{2}{*}[-4.5pt]{\cite{Yu2019Computation}}&Nonconvex&\multirow{2}{*}[1.5pt]{\parbox{1.9cm}{\centering Identical $\nabla f_i$, exponentially increasing batch size}}&\multirow{2}{*}[-4.5pt]{Star graph}&\multirow{2}{*}[-4.5pt]{\parbox{2.2cm}{\centering One full-information variable}} &$\mathcal{O}(\sqrt{nT}\log(\frac{T}{n}))$&$\mathcal{O}(1/\sqrt{nT})$&\\[9pt]
\cline{2-2}\cline{6-7}
&P--{\L} condition& & & &$\mathcal{O}(\log(T))$&$\mathcal{O}(1/(nT))$&\\[9pt]

\hline
\multirow{2}{*}[-4.5pt]{\cite{jiang2017collaborative}}&Nonconvex&\multirow{2}{*}[-4.5pt]{Bounded $\|\nabla f_i\|$}&\multirow{2}{*}[-4.5pt]{Connected graph}&\multirow{2}{*}[-4.5pt]{\parbox{2.2cm}{\centering One full-information variable}} &\multirow{2}{*}[-4.5pt]{$\mathcal{O}(T)$}&$\mathcal{O}(1/T^\theta),~\forall \theta\in(0,0.5)$&\\[9pt]
\cline{2-2}\cline{7-7}
&Strongly convex& & & &&$\mathcal{O}(1/T)$; linearly to a neighbor of the global optimum (constant stepsize)&\\[9pt]

\hline
\cite{lian2017can}&Nonconvex&Bounded $\|\nabla f_i-\nabla f\|$&Connected graph&One full-information variable &$\mathcal{O}(T)$&$\mathcal{O}(1/\sqrt{nT})$&\\[9pt]

\hline
\cite{Assran2019Stochastic}&Nonconvex&Bounded $\|\nabla f_i-\nabla f\|$&Uniformly jointly
strongly connected digraph&One full-information variable &$\mathcal{O}(T)$&$\mathcal{O}(1/\sqrt{nT})$&\\[9pt]

\hline
\cite{tang2018communication}&Nonconvex&Bounded $\|\nabla f_i-\nabla f\|$&Connected graph&One compressed variable &$\mathcal{O}(T)$&$\mathcal{O}(1/\sqrt{nT})$&\\[9pt]

\hline
\cite{taheri2020quantized}&Nonconvex&Bounded $\|\nabla f_i\|$&Strongly connected digraph&One quantized variable &$\mathcal{O}(T)$&$\mathcal{O}(1/\sqrt{nT})$&\\[9pt]

\hline
\multirow{2}{*}[-4.5pt]{\cite{Singh2020Communication}}&Nonconvex&\multirow{2}{*}[-4.5pt]{Bounded $\|\nabla f_i\|$}&\multirow{2}{*}[-4.5pt]{Connected graph}&\multirow{2}{*}[-4.5pt]{\parbox{2.2cm}{\centering One compressed variable}} &\multirow{2}{*}[-4.5pt]{Event-triggered}&$\mathcal{O}(1/\sqrt{nT})$&\\[9pt]
\cline{2-2}\cline{7-7}
&Strongly convex& & & &&$\mathcal{O}(1/(nT))$&\\[9pt]

\hline
\cite{wang2018cooperative}&Nonconvex&Identical $\nabla f_i$&Connected graph&One full-information variable &$\mathcal{O}(n^{3/2}\sqrt{T})$&$\mathcal{O}(1/\sqrt{nT})$&\\[9pt]

\hline
\cite{Tang2018Decentralized}&Nonconvex&The eigenvalues of the mixing matrix are strictly greater than $-1/3$&Connected graph&One full-information variable &$\mathcal{O}(T)$&$\mathcal{O}(1/\sqrt{nT})$&\\[9pt]

\hline
\cite{lu2019gnsd,zhang2019decentralized}&Nonconvex&No&Connected graph&Two full-information variables &$\mathcal{O}(T)$&$\mathcal{O}(1/\sqrt{T})$&\\[9pt]

\hline
\cite{stich2018local}&Strongly convex&Bounded $\|\nabla f_i\|$&Star graph&One full-information variable &$\mathcal{O}(\sqrt{T/n})$&$\mathcal{O}(1/(nT))$&\\[9pt]

\hline
\cite{Koloskova2019decentralized}&Strongly convex&Bounded $\|\nabla f_i\|$&Connected graph&One compressed variable &$\mathcal{O}(T)$&$\mathcal{O}(1/(nT))$&\\[9pt]

\hline
\cite{olshevsky2019non}&Strongly convex&No&Connected graph&Two full-information variables &$\mathcal{O}(T)$&$\mathcal{O}(1/(nT))$&\\[9pt]

\hline
\cite{rabbat2015multi}&Strongly convex&Identical $\nabla f_i$&Connected graph&One full-information variable &$\mathcal{O}(T)$&$\mathcal{O}(1/T)$&\\[9pt]

\hline
\cite{lan2018communication}&Strongly convex&No&Connected graph&One full-information variable &$\mathcal{O}(\sqrt{T})$&$\mathcal{O}(1/T)$&\\[9pt]

\hline
\cite{yuan2018optimal}&Strongly convex&Bounded $\|\nabla f_i\|$&Uniformly jointly
strongly connected digraph&One full-information variable &$\mathcal{O}(T)$&$\mathcal{O}(1/T)$&\\[9pt]

\hline
\cite{jakovetic2018convergence}&Strongly convex&No&Connected graph in expectation&One full-information variable &$\mathcal{O}(T)$&$\mathcal{O}(1/T)$&\\[9pt]

\hline
\cite{fallah2019robust}&Strongly convex&No&Connected graph&One full-information variable &$\mathcal{O}(T)$&$\mathcal{O}(1/T)$; linearly to a neighbor of the global optimum (constant stepsize)&\\[9pt]

\hline
\cite{pu2018swarming}&Strongly convex&No&Connected graph&One full-information variable &$\mathcal{O}(T)$&Linearly to a neighbor of the global optimum (constant stepsize)&\\[9pt]

\hline
\cite{pu2018distributed}&Strongly convex&No&Connected graph&Two full-information variables &$\mathcal{O}(T)$&Linearly to a neighbor of the global optimum (constant stepsize)&\\[9pt]

\hline
\cite{xin2019distributed}&Strongly convex&No&Strongly connected digraph&Two full-information variables &$\mathcal{O}(T)$&Linearly to a neighbor of the global optimum (constant stepsize)&\\[9pt]

\hline
\multirow{3}{*}[-12pt]{\parbox{0.9cm}{\centering This paper}}&Nonconvex&\multirow{2}{*}[-14pt]{No}&\multirow{3}{*}[-14pt]{Connected graph}&\multirow{3}{*}[-14pt]{\parbox{2.2cm}{\centering One full-information variable}} &\multirow{3}{*}[-14pt]{$\mathcal{O}(T)$}&$\mathcal{O}(1/\sqrt{nT})$&\\[9pt]
\cline{2-2}\cline{7-7}
&\multirow{2}{*}[-9pt]{\parbox{1cm}{\centering P--{\L} condition}}&& & &&$~~~~~~~~~$$\mathcal{O}(1/(T^\theta)),~\forall \theta\in(0,1)$;$~~~~~~~~~$ linearly to a neighbor of the global optimum (constant stepsize)&\\[9pt]
\cline{3-3}\cline{7-7}
&&Bounded $f^*_i$ & & &&$\mathcal{O}(1/(nT))$&\\[9pt]
\hline
\end{tabular}
}
\end{tiny}
\end{center}
\vskip -0.1in
\end{table*}

\subsection{Outline}
The rest of this paper is organized as follows. Section~\ref{sguT:sec-main} presents the new distributed primal--dual SGD algorithm. Section~\ref{sguT:sec-analysis} analyzes its convergence rate. Numerical experiments are given in Section~\ref{sgd:sec-simulation}. Finally, concluding remarks are offered in Section~\ref{sgd:sec-conclusion}.
To improve the readability, all the proofs are given in the appendix.

\noindent {\bf Notations}: $\mathbb{N}_0$ and $\mathbb{N}_+$ denote the set of nonnegative and positive integers, respectively. $[n]$ denotes the set $\{1,\dots,n\}$ for any $n\in\mathbb{N}_+$. ${\bm 1}_n$ (${\bm 0}_n$) denotes the column one (zero) vector of dimension $n$. $\col(z_1,\dots,z_k)$ is the concatenated column vector of vectors $z_i\in\mathbb{R}^{p_i},~i\in[k]$.
$\|\cdot\|$ represents the Euclidean norm for vectors or the induced 2-norm for matrices. Given a differentiable function $f$, $\nabla f$ denotes the gradient of $f$.

\section{Distributed Primal--Dual SGD Algorithm}\label{sguT:sec-main}

In this section, we propose a new distributed SGD algorithm based on the primal--dual method.

Denote $\bsx=\col(x_1,\dots,x_n)$, $\tilde{f}(\bsx)=\sum_{i=1}^{n}f_i(x_i)$,  and $\bsL=L\otimes {\bm I}_p$, where $L=(L_{ij})$ is the weighted Laplacian matrix associated with the undirected communication graph $\calG$.
Recall that the Laplacian matrix $L$ is positive semi-definite and $\nullrank(L)=\{{\bm 1}_n\}$ when $\calG$ is connected \cite{mesbahi2010graph}. The optimization problem \eqref{sgd:eqn:xopt} is equivalent to the following constrained optimization problem:
\begin{mini}
{\bsx\in \mathbb{R}^{np}}{\tilde{f}(\bsx)}{\label{sgd:eqn:xoptcon}}{}
\addConstraint{\bsL^{1/2}\bsx=}{~{\bm 0}_{np}.}{}
\end{mini}

Let $\bsu\in\mathbb{R}^{np}$ denote the dual variable. Then the augmented Lagrangian function associated with \eqref{sgd:eqn:xoptcon} is
\begin{align}\label{nonconvex:lagran}
\calA(\bsx,\bsu)=\tilde{f}(\bsx)+\frac{\alpha}{2}\bsx^\top\bsL\bsx+\beta\bsu^\top\bsL^{1/2}\bsx,
\end{align}
where $\alpha>0$ and $\beta>0$ are parameters to be designed later.

Based on the primal--dual gradient method, a distributed SGD algorithm to solve \eqref{sgd:eqn:xoptcon} is
\begin{subequations}\label{sgu:kiau-algo-compact}
\begin{align}
\bm{x}_{k+1}&=\bm{x}_k-\eta_k(\alpha_k\bsL\bm{x}_k+\beta_k\bsL^{1/2}\bm{u}_k+\bsg_k^u),\\
\bm{u}_{k+1}&=\bm{u}_{k}+\eta_k\beta_k\bsL^{1/2}\bm{x}_k,~\forall \bsx_0,~\bsu_0\in\mathbb{R}^{np},
\end{align}
\end{subequations}
where $\eta_k>0$ is the stepsize at iteration $k$, $\alpha_k>0$ and $\beta_k>0$ are the values of the parameters $\alpha$ and $\beta$ at iteration $k$, respectively, and $\bsg_k^u=\col(g^u_{1,k},\dots,g^u_{n,k})$ with $g^u_{i,k}=g_{i}(x_{i,k},\xi_{i,k})$ being the stochastic gradient of $f_i$ at $x_{i,k}$ and $\xi_{i,k}$ being a random variable.
Denote $\bsv_k=\col(v_{1,k},\dots,v_{n,k})=\bsL^{1/2}\bm{u}_k$. Then the recursion \eqref{sgu:kiau-algo-compact} can be rewritten as
\begin{subequations}\label{sgu:kia-algo-dc-compact}
\begin{align}
\bm{x}_{k+1}&=\bm{x}_k-\eta_k(\alpha_k\bsL\bm{x}_k+\beta_k\bm{v}_k+\bsg_k^u),
\label{sgu:kia-algo-dc-compact-x}\\
\bm{v}_{k+1}&=\bm{v}_k+\eta_k\beta_k\bsL\bm{x}_k,~\forall \bsx_0\in\mathbb{R}^{np},~\sum_{j=1}^nv_{j,0}={\bm 0}_p.\label{sgu:kia-algo-dc-compact-v}
\end{align}
\end{subequations}
The initialization condition $\sum_{j=1}^nv_{j,0}={\bm 0}_p$ is derived from $\bsv_0=\bsL^{1/2}\bm{u}_0$, and it is easy to be satisfied, for example, $v_{i,0}={\bm 0}_p,~\forall i\in[n]$, or $v_{i,0}=\sum_{j=1}^{n}L_{ij}x_{j,0},~\forall i\in[n]$.
Note that \eqref{sgu:kia-algo-dc-compact} can  be written agent-wise as
\begin{subequations}\label{sguT:alg:stochastic}
\begin{align}
x_{i,k+1} &= x_{i,k}-\eta_k\Big(\alpha_k\sum_{j=1}^nL_{ij}x_{j,k}+\beta_k v_{i,k}+g^u_{i,k}\Big), \label{sguT:alg:stochastic-x}\\
v_{i,k+1} &=v_{i,k}+ \eta_k\beta_k\sum_{j=1}^n L_{ij}x_{j,k},~\forall x_{i,0}\in\mathbb{R}^p, ~v_{i,0}={\bm 0}_p,~
\forall i\in[n].  \label{sguT:alg:stochastic-q}
\end{align}
\end{subequations}
This corresponds to our proposed distributed primal--dual SGD algorithm, which is presented in pseudo-code as Algorithm~\ref{sguT:algorithm-sg}.
\begin{algorithm}[tb]
\caption{Distributed Primal--Dual SGD Algorithm}
\label{sguT:algorithm-sg}
\begin{algorithmic}[1]
\STATE \textbf{Input}: parameters $\{\alpha_k\}$, $\{\beta_k\}$, $\{\eta_k\}\subseteq(0,+\infty)$.
\STATE \textbf{Initialize}: $ x_{i,0}\in\mathbb{R}^p$ and $v_{i,0}={\bm 0}_p,~
\forall i\in[n]$.
\FOR{$k=0,1,\dots$}
\FOR{$i=1,\dots,n$  in parallel}
\STATE  Broadcast $x_{i,k}$ to $\mathcal{N}_i$ and receive $x_{j,k}$ from $j\in\mathcal{N}_i$;
\STATE  Sample stochastic gradient $g_{i}(x_{i,k},\xi_{i,k})$;
\STATE  Update $x_{i,k+1}$ by \eqref{sguT:alg:stochastic-x};
\STATE  Update $v_{i,k+1}$ by \eqref{sguT:alg:stochastic-q}.
\ENDFOR
\ENDFOR
\STATE  \textbf{Output}: $\{\bsx_k\}$.
\end{algorithmic}
\end{algorithm}

It should be pointed out that $\{\alpha_k\}$, $\{\beta_k\}$, $\{\eta_k\}$, $\bsx_0$, $\bsv_0$, and $\bsv_1$  in Algorithm~\ref{sguT:algorithm-sg} are deterministic, while $\{\bsx_k\}_{k\ge1}$ and $\{\bsv_k\}_{k\ge2}$ are random variables generated by Algorithm~\ref{sguT:algorithm-sg}. Let $\mathfrak{F}_k$ denote the $\sigma$-algebra generated by the random variables $\xi_{1,k},\dots,\xi_{n,k}$ and let $\mathcal{F}_k=\bigcup_{s=1}^{k}\mathfrak{F}_s$. It is straightforward to see that $\bsx_k$ and $\bsv_{k+1}$ depend on $\mathcal{F}_{k-1}$ and are independent of $\mathfrak{F}_s$ for all $s\ge k$.

\section{Convergence Rate Analysis}\label{sguT:sec-analysis}
In this section, we analyze the convergence rate of Algorithm~\ref{sguT:algorithm-sg}.
The following assumptions are made.

\begin{assumption}\label{sgd:assgraph}
The undirected communication graph $\mathcal G$ is connected.
\end{assumption}

\begin{assumption}\label{sgd:assoptset}
The minimum function value of the optimization problem \eqref{sgd:eqn:xopt} is finite.
\end{assumption}

\begin{assumption}\label{sgd:assfiu}
Each  local cost function $f_i$ is smooth with constant $L_{f}>0$, i.e.,
\begin{align}\label{nonconvex:smooth}
\|\nabla f_i(x)-\nabla f_i(y)\|\le L_{f}\|x-y\|,~\forall x,y\in \mathbb{R}^p.
\end{align}
\end{assumption}

\begin{assumption}\label{sgd:ass:stochastic-grad:xi}
The random variables $\{\xi_{i,k},~i\in[n],~k\in\mathbb{N}_0\}$ are independent of each other.
\end{assumption}

\begin{assumption}\label{sgd:ass:stochastic-grad:mean}
The stochastic estimate $g_{i}(x,\xi_{i,k})$ is unbiased, i.e., for all $i\in[n]$, $k\in\mathbb{N}_0$, and $x\in\mathbb{R}^p$,
\begin{align}\label{sgd:ass:stochastic-grad:mean-equ}
\mathbf{E}_{\xi_{i,k}}[g_{i}(x,\xi_{i,k})]=\nabla f_{i}(x).
\end{align}
\end{assumption}

\begin{assumption}\label{sgd:ass:stochastic-grad:variance}
The stochastic estimate $g_{i}(x,\xi_{i,k})$ has bounded variance, i.e., there exists a constant $\sigma$ such that for all $i\in[n]$, $k\in\mathbb{N}_0$, and $x\in\mathbb{R}^p$,
\begin{align}\label{sgd:ass:stochastic-grad:variance-equ}
\mathbf{E}_{\xi_{i,k}}[\|g_{i}(x,\xi_{i,k})-\nabla f_{i}(x)\|^2]\le\sigma^2.
\end{align}
\end{assumption}

\begin{remark}
The bounded variance assumption (Assumption~\ref{sgd:ass:stochastic-grad:variance}) is weaker than the bounded second moment (or bounded gradient) assumption made in \cite{recht2011hogwild,de2015taming,lian2015asynchronous,Zhou2018distributedas,basu2019qsparse,
yu2019parallel,jiang2017collaborative,george2019distributed,taheri2020quantized,yuan2018optimal,
rakhlin2011making,stich2018local,Koloskova2019decentralized,Singh2020Communication}. Moreover, note that we make no assumption on the boundedness of the deviation between the gradients of local cost functions. In other words, we do not assume that $\frac{1}{n}\sum_{i=1}^{n}\|\nabla f_i(x)-\nabla f(x)\|^2$ is uniformly bounded, which is commonly done in studies of deep learning, e.g., \cite{jiang2018linear,yu2019parallel,Yu2019on,lian2017can,pmlr-v80-lian18a,Assran2019Stochastic,tang2018communication}.
Also, we do not assume that the mean of each local stochastic gradient is the gradient of the global cost function, i.e.,  $\mathbf{E}_{\xi}[g_{i}(x,\xi)]=\nabla f(x),~\forall x\in\mathbb{R}^p,~\forall i\in[n]$,  which is commonly assumed in studies of empirical risk minimization and stochastic optimization, e.g., \cite{lian2016Comprehensive,pmlr-v80-bernstein18a,reisizadeh2019fedpaq,wang2018adaptive,haddadpour2019trading,
haddadpour2019local,Yu2019Computation,reisizadeh2019robust,wang2018cooperative,rabbat2015multi}.
\end{remark}

\subsection{Find Stationary Points}

Let us consider the case when Algorithm~\ref{sguT:algorithm-sg} is able to find stationary points.
We have the following convergence results.

\begin{theorem}\label{sguT:thm-sg-smT}
Suppose Assumptions~\ref{sgd:assgraph}--\ref{sgd:ass:stochastic-grad:variance} hold. Let $\{\bsx_k\}$ be the sequence generated by Algorithm~\ref{sguT:algorithm-sg} with
\begin{align}\label{sguT:step:eta2-sm}
\alpha_k=\kappa_1\beta_k,~\beta_k=\beta,~ \eta_k=\frac{\kappa_2}{\beta_k},~\forall k\in\mathbb{N}_0,
\end{align}
where $\kappa_1>c_1$, $\kappa_2\in(0,c_2(\kappa_1))$, and $\beta\ge c_0(\kappa_1,\kappa_2)$ with $c_0(\kappa_1,\kappa_2),~c_1,~c_2(\kappa_1)>0$  defined in Appendix~\ref{sguT:proof-thm-sg-smT}.
Then, for any $T\in\mathbb{N}_+$,
\begin{subequations}
\begin{align}
&\frac{1}{T}\sum_{k=0}^{T-1}\mathbf{E}\Big[\frac{1}{n}\sum_{i=1}^{n}\|x_{i,k}-\bar{x}_k\|^2\Big]
=\mathcal{O}(\frac{1}{T})
+\mathcal{O}(\frac{1}{\beta^2}),\label{sguT:thm-sg-sm-equ3.1}\\
&\frac{1}{T}\sum_{k=0}^{T-1}\mathbf{E}[\|\nabla f(\bar{x}_k)\|^2]
=\mathcal{O}(\frac{\kappa_2\beta}{T})
+\mathcal{O}(\frac{4\kappa_2}{n\beta})
+\mathcal{O}(\frac{1}{T})+\mathcal{O}(\frac{1}{\beta^2}),\label{sguT:thm-sg-sm-equ3}\\
&\mathbf{E}[f(\bar{x}_{T})]-f^*=\mathcal{O}(1)
+\mathcal{O}(\frac{T}{n\beta^2})+\mathcal{O}(\frac{T}{\beta^3}),\label{sguT:thm-sg-sm-equ4}
\end{align}
\end{subequations}
where $f^*$ is the minimum function value of the optimization problem \eqref{sgd:eqn:xopt} and $\bar{x}_k=\frac{1}{n}\sum_{i=1}^{n}x_{i,k}$.
\end{theorem}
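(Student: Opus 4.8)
The plan is to treat this as a perturbed primal--dual gradient descent analysis, using a carefully constructed Lyapunov function and the $\sigma$-algebra filtration $\mathcal{F}_k$ to handle the stochastic terms. First I would change coordinates: decompose $\bsx_k$ into its mean component $\bar{x}_k\otimes\bsone_n$ and the consensus error $\bsx_{\perp,k}$ lying in the orthogonal complement of $\bsone_n$, and similarly track $\bsv_k$, which by the initialization $\sum_j v_{j,0}=\bszero_p$ and the update \eqref{sgu:kia-algo-dc-compact-v} stays in the range of $\bsL$ for all $k$. The quantity $\bar{x}_{k+1}=\bar{x}_k-\eta_k\frac{1}{n}\sum_i g^u_{i,k}$ obeys a clean recursion because $\bsone_n^\top\bsL=\bszero$ kills both the $\alpha_k\bsL\bsx_k$ and $\beta_k\bsv_k$ terms, so the ``average dynamics'' is essentially centralized SGD driven by $\frac1n\sum_i\nabla f_i(x_{i,k})$ rather than $\nabla f(\bar x_k)$; the gap between these is controlled by $L_f$ times the consensus error via Assumption~\ref{sgd:assfiu}.

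The core estimates are three coupled one-step inequalities. For \eqref{sguT:thm-sg-sm-equ3.1}, I would show the consensus error contracts: using that $\bsL$ restricted to $\bsone_n^\perp$ has smallest eigenvalue $\rho_2(L)>0$, and that with $\alpha_k=\kappa_1\beta$, $\eta_k=\kappa_2/\beta$ the primal--dual block acting on $(\bsx_{\perp,k},\bsv_k)$ is a discretization of a stable linear system, one gets $\mathbf{E}[\|\bsx_{\perp,k+1}\|^2 + (\text{cross terms}) + \|\bsv_{\perp,k+1}-\bsv^*\|^2] \le (1-c\kappa_2)(\cdot)_k + \mathcal{O}(\eta_k^2\sigma^2 n) + \mathcal{O}(\eta_k^2\|\overline{\nabla f}_k\|^2 n)$, where the $\mathcal{O}(\eta_k^2)=\mathcal{O}(1/\beta^2)$ term is what produces the $\mathcal{O}(1/\beta^2)$ floor. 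Summing the geometric series over $k=0,\dots,T-1$ and dividing by $T$ yields the $\mathcal{O}(1/T)+\mathcal{O}(1/\beta^2)$ bound, modulo absorbing the gradient-norm terms, which is legitimate because those are themselves bounded by the descent inequality below. For \eqref{sguT:thm-sg-sm-equ3} I would use the $L_f$-smoothness descent lemma on $f$ along $\bar x_k$: $\mathbf{E}[f(\bar x_{k+1})\mid\mathcal{F}_{k-1}] \le f(\bar x_k) - \frac{\eta_k}{2}\|\nabla f(\bar x_k)\|^2 + \mathcal{O}(\eta_k L_f^2)\cdot\frac1n\|\bsx_{\perp,k}\|^2 + \mathcal{O}(\eta_k^2 L_f\sigma^2/n)$, telescoping, rearranging for the gradient term, dividing by $\eta_k T$ with $\eta_k=\kappa_2/\beta$, and substituting the already-proved consensus bound; the $\sigma^2/n$ in the noise term (rather than $\sigma^2$) is exactly the variance-reduction effect that yields the $\mathcal{O}(\kappa_2/(n\beta))$ summand and ultimately the linear speedup. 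Inequality \eqref{sguT:thm-sg-sm-equ4} follows by telescoping the same descent inequality without dividing, bounding $f(\bar x_T)-f^* \le f(\bar x_0)-f^* + \sum_k[\text{per-step increments}]$, where the $T/(n\beta^2)$ and $T/\beta^3$ come from $T$ copies of the $\mathcal{O}(\eta_k^2/n)$ noise term and the $\mathcal{O}(\eta_k/\beta^2)$ consensus-floor term respectively.

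I expect the main obstacle to be constructing the right Lyapunov/energy function for the coupled $(\bsx_{\perp},\bsv)$ system and pinning down the admissible parameter region --- i.e., identifying the explicit constants $c_1$ (lower bound on $\kappa_1$ ensuring the $\alpha_k\bsL$ damping dominates the off-diagonal coupling with $\beta_k\bsv_k$), $c_2(\kappa_1)$ (upper bound on the stepsize $\kappa_2$ for the discretization to remain contractive), and $c_0(\kappa_1,\kappa_2)$ (lower bound on $\beta$). The natural candidate is $V_k = a\|\bsx_{\perp,k}\|^2 + 2b\,\bsx_{\perp,k}^\top(\text{something involving }\bsv_k) + c\|\bsv_k - \bsv^*\|^2$ with $\bsv^* $ the unique fixed point in $\col(\bsL)$ satisfying $\beta\bsv^* = -\text{proj}_{\bsone^\perp}\overline{\nabla f}$ --- but since the gradients vary with $k$ and are only bounded in a time-averaged sense, $\bsv^*$ is not constant, so the cross terms between $\Delta\bsv^*$ across iterations and the contraction have to be controlled, plausibly by Young's inequality at the cost of enlarging constants. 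A secondary technical point is the measurability bookkeeping: $\bsx_k$ and $\bsv_{k+1}$ are $\mathcal{F}_{k-1}$-measurable while $\bsg^u_k$ has conditional mean $\nabla f_i(x_{i,k})$ and conditional variance $\le\sigma^2$ given $\mathcal{F}_{k-1}$, so all cross terms linear in the noise vanish in conditional expectation and one must apply the tower property carefully, but this is routine once the filtration structure spelled out after Algorithm~\ref{sguT:algorithm-sg} is invoked. I would organize the proof as: (1) coordinate decomposition and recursions; (2) the composite Lyapunov inequality for consensus/dual error; (3) the descent inequality for $f(\bar x_k)$; (4) telescoping and combining, with the parameter constraints emerging from requiring all coefficients in steps (2)--(3) to have the correct sign.
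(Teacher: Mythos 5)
Your proposal is correct and follows essentially the same route as the paper: the paper's Lyapunov function $W_k=\frac{1}{2}\|\bsx_k\|^2_{\bsK}+\frac{1}{2}\|\bsv_k+\frac{1}{\beta}\bsg_k^0\|^2_{\bsQ+\kappa_1\bsK}+\bsx_k^\top\bsK(\bsv_k+\frac{1}{\beta}\bsg_k^0)+n(f(\bar{x}_k)-f^*)$ is exactly your consensus-error/shifted-dual/cross-term/function-gap combination, with the time-varying target $\bsv^*=-\frac{1}{\beta}\nabla\tilde f(\bar\bsx_k)$ handled via $\|\bsg^0_{k+1}-\bsg^0_k\|\le \eta_k L_f\|\bar{\bsg}^u_k\|$ and Young's inequality as you anticipated, and the final bounds obtained by telescoping the combined descent inequality and the separate descent inequality for $f(\bar{x}_k)$. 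The only cosmetic difference is that the paper fixes the dual-error weighting to be $\bsQ+\kappa_1\bsK$ with $\bsQ$ the pseudoinverse of $\bsL$ on $\col(\bsL)$ rather than leaving generic coefficients $a,b,c$ to be determined.
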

\begin{proof}
The explicit expressions of the right-hand sides of \eqref{sguT:thm-sg-sm-equ3.1}--\eqref{sguT:thm-sg-sm-equ4} and the proof are given in Appendix~\ref{sguT:proof-thm-sg-smT}. It should be highlighted that the omitted constants in the first two terms in the right-hand side of  \eqref{sguT:thm-sg-sm-equ3} do not depend on any parameters related to the communication network.
\end{proof}

Noting the right-hand side of \eqref{sguT:thm-sg-sm-equ3} and $f(\bar{x}_0)-f^*=\mathcal{O}(1)$, the linear speedup (w.r.t. number of agents) can be established if we set $\beta=\kappa_2\sqrt{T}/\sqrt{n}$, as shown in the following.

\begin{corollary}[\bf Linear Speedup]\label{sguT:coro-sg-smT}
Under the same assumptions as in Theorem~\ref{sguT:thm-sg-smT}, let $\beta=\kappa_2\sqrt{T}/\sqrt{n}$. Then, for any $T>  \max\{n(c_0(\kappa_1,\kappa_2)/\kappa_2)^2,~n^3\}$,
\begin{subequations}
\begin{align}
&\frac{1}{T}\sum_{k=0}^{T-1}\mathbf{E}\Big[\frac{1}{n}\sum_{i=1}^{n}\|x_{i,k}-\bar{x}_k\|^2\Big]
=\mathcal{O}(\frac{n}{T}),\label{sguT:coro-sg-sm-equ3.1}\\
&\frac{1}{T}\sum_{k=0}^{T-1}\mathbf{E}[\|\nabla f(\bar{x}_k)\|^2]
=\mathcal{O}(\frac{1}{\sqrt{nT}})+\mathcal{O}(\frac{n}{T}),\label{sguT:coro-sg-sm-equ3}\\
&\mathbf{E}[f(\bar{x}_{T})]-f^*=\mathcal{O}(1).\label{sguT:coro-sg-sm-equ4}
\end{align}
\end{subequations}
\end{corollary}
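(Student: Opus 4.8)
The plan is to derive Corollary~\ref{sguT:coro-sg-smT} directly from Theorem~\ref{sguT:thm-sg-smT} by substituting the specific choice $\beta=\kappa_2\sqrt{T}/\sqrt{n}$ into the three order bounds~\eqref{sguT:thm-sg-sm-equ3.1}--\eqref{sguT:thm-sg-sm-equ4} and checking that this choice is admissible for the stated range of $T$. First I would verify admissibility: Theorem~\ref{sguT:thm-sg-smT} requires $\beta\ge c_0(\kappa_1,\kappa_2)$, which under $\beta=\kappa_2\sqrt{T}/\sqrt{n}$ becomes $\sqrt{T/n}\ge c_0(\kappa_1,\kappa_2)/\kappa_2$, i.e. $T\ge n(c_0(\kappa_1,\kappa_2)/\kappa_2)^2$; this is exactly one of the two conditions imposed on $T$ in the corollary. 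The choices $\kappa_1>c_1$ and $\kappa_2\in(0,c_2(\kappa_1))$ are inherited unchanged, so the hypotheses of the theorem are met.

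Next I would substitute $\beta=\kappa_2\sqrt{T/n}$ term by term. For the consensus bound~\eqref{sguT:thm-sg-sm-equ3.1}, $\mathcal{O}(1/T)+\mathcal{O}(1/\beta^2)=\mathcal{O}(1/T)+\mathcal{O}(n/(\kappa_2^2 T))=\mathcal{O}(n/T)$, giving~\eqref{sguT:coro-sg-sm-equ3.1}. For the stationarity bound~\eqref{sguT:thm-sg-sm-equ3}, the four terms become $\mathcal{O}(\kappa_2\beta/T)=\mathcal{O}(\kappa_2^2/\sqrt{nT})$, $\mathcal{O}(\kappa_2/(n\beta))=\mathcal{O}(1/\sqrt{nT})$, $\mathcal{O}(1/T)$, and $\mathcal{O}(1/\beta^2)=\mathcal{O}(n/T)$; collecting the first two into $\mathcal{O}(1/\sqrt{nT})$ and the last two into $\mathcal{O}(n/T)$ yields~\eqref{sguT:coro-sg-sm-equ3}. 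Finally, for~\eqref{sguT:thm-sg-sm-equ4}, substituting gives $\mathcal{O}(1)+\mathcal{O}(T/(n\beta^2))+\mathcal{O}(T/\beta^3)=\mathcal{O}(1)+\mathcal{O}(1/\kappa_2^2)+\mathcal{O}(n^{3/2}/(\kappa_2^3\sqrt{T}))$; here the condition $T>n^3$ ensures $n^{3/2}/\sqrt{T}<1$, so the last term is $\mathcal{O}(1)$ and the whole right-hand side is $\mathcal{O}(1)$, which is~\eqref{sguT:coro-sg-sm-equ4}.

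Since these are all purely algebraic substitutions into already-proved inequalities, there is no genuine mathematical obstacle; the one point requiring a little care is bookkeeping of what the hidden constants may depend on. In particular I would note (as the proof of Theorem~\ref{sguT:thm-sg-smT} already emphasizes) that the constants in the first two terms of~\eqref{sguT:thm-sg-sm-equ3} are independent of the network, so that after substitution the $\mathcal{O}(1/\sqrt{nT})$ term in~\eqref{sguT:coro-sg-sm-equ3} carries a network-independent constant — this is what legitimately deserves the name "linear speedup." The condition $T>n^3$ is used only to absorb the $\mathcal{O}(T/\beta^3)$ term in~\eqref{sguT:thm-sg-sm-equ4}; one could state a weaker condition ($T>n$ suffices to make that term $o(1)$ if one only wants $\mathcal{O}(1)$ with an explicit constant), but $T>n^3$ is clean and harmless. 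I would close by remarking that~\eqref{sguT:coro-sg-sm-equ3} shows the algorithm finds a stationary point at rate $\mathcal{O}(1/\sqrt{nT})$ once $T$ is large enough that the $\mathcal{O}(n/T)$ transient is dominated, i.e. for $T=\Omega(n^3)$, which is the regime already assumed.
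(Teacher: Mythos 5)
Your proposal is correct and is essentially the paper's own (implicit) argument: the corollary follows by substituting $\beta=\kappa_2\sqrt{T}/\sqrt{n}$ into the three bounds of Theorem~\ref{sguT:thm-sg-smT}, with $T\ge n(c_0(\kappa_1,\kappa_2)/\kappa_2)^2$ guaranteeing $\beta\ge c_0(\kappa_1,\kappa_2)$ and $T>n^3$ absorbing the $\mathcal{O}(T/\beta^3)=\mathcal{O}(n^{3/2}/\sqrt{T})$ term. Your term-by-term substitutions and the remark on network-independent constants in the $\mathcal{O}(1/\sqrt{nT})$ term all check out.
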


\begin{remark}
It should be highlighted that the omitted constants in the first term in the right-hand side of  \eqref{sguT:coro-sg-sm-equ3} do not depend on any parameters related to the communication network.
The same linear speedup result as in \eqref{sguT:coro-sg-sm-equ3} was also established by the SGD algorithms proposed in \cite{jiang2018linear,yu2019parallel,Yu2019on,Yu2019Computation,lian2017can,Assran2019Stochastic,
tang2018communication,taheri2020quantized,wang2018cooperative,Tang2018Decentralized,Singh2020Communication}.
However, in \cite{jiang2018linear,Yu2019on,lian2017can,Assran2019Stochastic,tang2018communication},
the additional assumption that the deviation between the gradients of local cost functions is bounded was made;
in \cite{yu2019parallel,taheri2020quantized,Singh2020Communication}, it was required that each local stochastic gradient has bounded second moment;
in \cite{Yu2019Computation,wang2018cooperative}, it was assumed that the mean of each local stochastic gradient is the gradient of the global cost function;
and in \cite{Tang2018Decentralized}, it was  required that the eigenvalues of the mixing matrix are strictly greater than $-1/3$. Moreover, the algorithms proposed in \cite{jiang2018linear,Yu2019Computation} are restricted to a star graph;
the distributed momentum SGD algorithm proposed in \cite{Yu2019on} requires each agent $i$ to communicate one additional $p$-dimensional variable besides the communication of $x_{i,k}$ with its neighbors at each iteration;
and the algorithm proposed in \cite{Yu2019Computation} requires an exponentially increasing batch size, which is not favorable in practice.
Under the same conditions,  the well known $\mathcal{O}(1/\sqrt{T})$ convergence rate, which is not a speedup, was achieved by the distributed stochastic gradient tracking algorithm proposed in \cite{lu2019gnsd,zhang2019decentralized}.
Moreover, similar to the distributed momentum SGD algorithm proposed in \cite{Yu2019on}, one potential drawback of the distributed stochastic gradient tracking algorithms is that at each iteration each agent needs to communicate one additional variable.
The potential drawbacks of the results stated in Corollary~\ref{sguT:coro-sg-smT} are that (i) we do not consider communication efficiency, which was considered in \cite{jiang2018linear,yu2019parallel,Yu2019Computation,tang2018communication,taheri2020quantized,
wang2018cooperative,Singh2020Communication}; and (ii) we use time-invariant undirected graphs rather than directed graphs as considered in \cite{Assran2019Stochastic,taheri2020quantized}. We leave the extension to the time-varying directed graphs with communication efficiency as future research directions.
\end{remark}

\subsection{Find Global Optimum}

Let us next consider cases when Algorithm~\ref{sguT:algorithm-sg} finds global optimum.
The following assumption is crucial.

\begin{assumption}\label{sgd:assfil} The global cost function $f(x)$ satisfies the Polyak--{\L}ojasiewicz (P--{\L}) condition with constant $\nu>0$, i.e.,
\begin{align}
\frac{1}{2}\|\nabla f(x)\|^2\ge \nu( f(x)-f^*),~\forall x\in \mathbb{R}^p.\label{nonconvex:equ:plc}
\end{align}
\end{assumption}

It is straightforward to see that every (essentially or weakly) strongly convex function satisfies the P--{\L} condition.
The P--{\L} condition implies that every stationary point is a global minimizer. But unlike (essentially or weakly) strong convexity, the P--{\L} condition  alone does not imply convexity of $f$. Moreover, it does not imply that the global minimizer is a unique either \cite{karimi2016linear,zhang2015restricted}.

Many practical applications, such as least squares and logistic regression, do not always have strongly convex cost functions.
The cost function in least squares problems has the form
\begin{align*}
f(x)=\frac{1}{2}\|Ax-b\|^2,
\end{align*}
where $A\in\mathbb{R}^{m\times p}$ and $b\in\mathbb{R}^m$. Note that if $A$ has full column rank, then $f(x)$ is strongly convex. However, if $A$ is rank deficient, then $f(x)$ is not strongly convex, but it is convex and satisfies the P--{\L} condition.
Examples of nonconvex functions which satisfy the P--{\L} condition can be found in \cite{karimi2016linear,zhang2015restricted}.

Although it is difficult to precisely characterize the general class of functions for which the P--{\L} condition is satisfied, in \cite{karimi2016linear}, one important special class was given as follows:
\begin{lemma}
Let $f(x)=g(Ax)$, where $g:\mathbb{R}^p\rightarrow\mathbb{R}$ is a strongly convex function and $A\in\mathbb{R}^{p\times p}$ is a matrix, then $f$ satisfies the P--{\L} condition.
\end{lemma}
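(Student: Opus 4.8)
The plan is to show that $f(x) = g(Ax)$ satisfies the P--{\L} condition \eqref{nonconvex:equ:plc}, starting from the hypothesis that $g$ is strongly convex and $A$ is square. First I would record the strong convexity of $g$: there exists $\mu > 0$ such that for all $y, z \in \mathbb{R}^p$,
\begin{align*}
g(z) \ge g(y) + \nabla g(y)^\top (z - y) + \frac{\mu}{2}\|z - y\|^2.
\end{align*}
Minimizing both sides of this inequality over $z$ — the right-hand side is a quadratic in $z$ minimized at $z = y - \frac{1}{\mu}\nabla g(y)$ — yields the classical consequence $g^*_{\mathrm{range}} := \min_w g(w) \ge g(y) - \frac{1}{2\mu}\|\nabla g(y)\|^2$, i.e. the P--{\L} inequality for $g$ itself:
\begin{align*}
\frac{1}{2}\|\nabla g(y)\|^2 \ge \mu\,(g(y) - g^*_{g}),
\end{align*}
where $g^*_{g} = \min_{w \in \mathbb{R}^p} g(w)$. (Since $g$ is strongly convex this minimum exists and is attained.)

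Next I would relate the quantities for $f$ to those for $g$. By the chain rule $\nabla f(x) = A^\top \nabla g(Ax)$. Since $A$ is square, I would split into two cases or — more cleanly — use the minimal nonzero singular value. Let $\lambda_{\min}^+(A^\top A)$ denote the smallest nonzero eigenvalue of $A^\top A$. The vector $\nabla g(Ax)$ need not lie in the row space of $A$, so I would first argue that only its projection onto $\mathrm{range}(A^\top) = \mathrm{range}(A^\top A)$ matters and then use that $\|A^\top w\|^2 \ge \lambda_{\min}^+(A^\top A)\,\|\Pi w\|^2$ where $\Pi$ is the orthogonal projector onto $\mathrm{range}(A^\top)$. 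The subtlety: I must show $\|\nabla g(Ax)\|^2$ can be replaced by $\|\Pi \nabla g(Ax)\|^2$ in the P--{\L} bound for $g$ when evaluated along points of the form $Ax$. This uses that $g(Ax) - g^*_{g}$ — well, actually the cleaner route is: restrict $g$ to the affine subspace $V = \mathrm{range}(A)$; on $V$, $g$ is still strongly convex (with the same modulus $\mu$), its minimum over $V$ is $f^* = \min_x f(x)$, and its gradient restricted to $V$ is exactly $\Pi_V \nabla g$. Applying the P--{\L} inequality for the restriction gives $\tfrac12 \|\Pi_V \nabla g(Ax)\|^2 \ge \mu\,(g(Ax) - f^*) = \mu\,(f(x) - f^*)$. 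Then I would push the projection through $A^\top$: since $\ker(A^\top) \cap \mathrm{range}(A)$ is what gets killed, and $\|A^\top v\|^2 \ge \sigma^2 \|v\|^2$ for $v \in \mathrm{range}(A) \cap (\ker A^\top)^\perp$ with $\sigma$ the smallest relevant singular value; combined this gives $\|\nabla f(x)\|^2 = \|A^\top \nabla g(Ax)\|^2 \ge \sigma^2 \|\Pi_V \nabla g(Ax)\|^2 \ge 2\mu\sigma^2 (f(x) - f^*)$, which is the P--{\L} condition for $f$ with $\nu = \mu\sigma^2 > 0$.

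The main obstacle is the case where $A$ is singular, since then $\nabla g(Ax)$ has a component orthogonal to $\mathrm{range}(A^\top)$ that is annihilated by $A^\top$, and one cannot naively bound $\|A^\top \nabla g(Ax)\|$ below by $\|\nabla g(Ax)\|$. The resolution, as sketched above, is to work intrinsically on the subspace $V = \mathrm{range}(A)$: verify (i) $g|_V$ is $\mu$-strongly convex, so it has a unique minimizer $y^* \in V$ and $\min_{y \in V} g(y) = \min_x g(Ax) = f^*$; (ii) the P--{\L} inequality holds for $g|_V$, giving control by $\|\Pi_V \nabla g(Ax)\|$; (iii) the component of $\nabla g(Ax)$ orthogonal to $V$ is irrelevant because $A^\top$ acts injectively on $V \ominus (V \cap \ker A^\top)$ — and in fact for a square $A$, $\mathrm{range}(A) \cap \ker(A^\top) = \{0\}$ is false in general, so one should instead note $A^\top|_{V}$ has trivial kernel iff $\mathrm{range}(A) \cap \ker(A^\top) = \{0\}$; when this intersection is nontrivial the cleanest fix is to observe $\nabla g(Ax)$ for $x$ ranging over $\mathbb{R}^p$ only ever needs its $\mathrm{range}(A^\top)$-component, and on $\mathrm{range}(A^\top)$ the map $A^\top$ is injective with smallest singular value $\sigma > 0$. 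If $A$ has full rank the whole argument collapses to the one-line bound $\|A^\top w\| \ge \sigma_{\min}(A)\|w\|$, so it suffices to handle full rank and cite \cite{karimi2016linear} for the rank-deficient bookkeeping, or present the subspace argument in full for completeness.
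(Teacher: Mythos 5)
The paper does not actually prove this lemma: it is stated as an imported result, attributed to \cite{karimi2016linear}, so there is no in-paper argument to compare against. Judged on its own, your proof is correct in substance and is a legitimate self-contained alternative to citing that reference. The chain you build --- strong convexity of $g$ implies P--{\L} for the restriction $g|_V$ on $V=\mathrm{range}(A)$ with minimum value $f^*=\min_x f(x)$, the chain rule gives $\nabla f(x)=A^\top\nabla g(Ax)$, and $A^\top$ is bounded below on $V$ by the smallest nonzero singular value $\sigma$ of $A$ --- yields $\|\nabla f(x)\|^2\ge \sigma^2\|\Pi_V\nabla g(Ax)\|^2\ge 2\mu\sigma^2(f(x)-f^*)$, which is exactly the P--{\L} inequality with $\nu=\mu\sigma^2$. (The standard proof in \cite{karimi2016linear} instead goes through an error-bound/quadratic-growth argument on the affine solution set $\{x:Ax=z^*\}$; your route is more direct.)

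Two slips are worth cleaning up, neither of which breaks the argument. First, your anxiety about $\mathrm{range}(A)\cap\ker(A^\top)$ being nontrivial is unfounded: by the fundamental theorem of linear algebra $\ker(A^\top)=\mathrm{range}(A)^\perp$, so this intersection is always $\{0\}$ and $A^\top$ restricted to $V=\mathrm{range}(A)$ is automatically injective, hence bounded below by $\sigma>0$. (You may be conflating $\ker(A^\top)$ with $\ker(A)$; it is $\mathrm{range}(A)\cap\ker(A)$ that can be nontrivial for square $A$.) Relatedly, in your first formulation the projector in the bound $\|A^\top w\|^2\ge\lambda_{\min}^{+}(A^\top A)\|\Pi w\|^2$ should be the projector onto $\mathrm{range}(A)=\ker(A^\top)^\perp$, not onto $\mathrm{range}(A^\top)$; your later ``cleaner route'' uses the right subspace. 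Second, the degenerate case $A=0$ should be dispatched separately ($f$ is then constant and P--{\L} holds vacuously), since otherwise ``smallest nonzero singular value'' is undefined. With those repairs the proof is complete.
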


We have the following global convergence results.

\begin{theorem}\label{sguT:thm-sg-diminishingT}
Suppose Assumptions~\ref{sgd:assgraph}--\ref{sgd:assfil} hold. For any given $T\ge(c_0(\kappa_1,\kappa_2)/\kappa_2)^{1/\theta}$, let $\{\bsx_0,\dots,\bsx_T\}$ be the sequence generated by Algorithm~\ref{sguT:algorithm-sg} with
\begin{align}\label{sguT:step:eta1T}
\alpha_k=\kappa_1\beta_k,~\beta_k=\kappa_2(T+1)^\theta,
~ \eta_k=\frac{\kappa_2}{\beta_k},~\forall k\le T,
\end{align}
where $\theta\in(0,1)$, $\kappa_1>c_1$, $\kappa_2\in(0,c_2(\kappa_1))$.
Then,
\begin{subequations}
\begin{align}
&\mathbf{E}\Big[\frac{1}{n}\sum_{i=1}^{n}\|x_{i,T}-\bar{x}_T\|^2\Big]
=\mathcal{O}(\frac{1}{T^{2\theta}}), \label{sguT:thm-sg-equ2.1bounded}\\
&\mathbf{E}[f(\bar{x}_{T})-f^*]
=\mathcal{O}(\frac{1}{nT^\theta})+\mathcal{O}(\frac{1}{T^{2\theta}}).
\label{sguT:thm-sg-equ2bounded}
\end{align}
\end{subequations}
\end{theorem}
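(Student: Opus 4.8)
The plan is to reuse the Lyapunov analysis behind Theorem~\ref{sguT:thm-sg-smT} and feed in the P--{\L} inequality. The first observation is that with the schedule \eqref{sguT:step:eta1T} the products $\eta_k\alpha_k=\kappa_1\kappa_2$ and $\eta_k\beta_k=\kappa_2$ are constant in $k$, while $\beta_k\equiv\kappa_2(T+1)^\theta\ge c_0(\kappa_1,\kappa_2)$ exactly because $T\ge(c_0(\kappa_1,\kappa_2)/\kappa_2)^{1/\theta}$; hence Theorem~\ref{sguT:thm-sg-diminishingT} is Theorem~\ref{sguT:thm-sg-smT} run with the constant value $\beta=\kappa_2(T+1)^\theta$, together with the extra structure from Assumption~\ref{sgd:assfil}. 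I would therefore start from the one-step drift inequality established in Appendix~\ref{sguT:proof-thm-sg-smT}. Writing $\bar{\bsx}_k={\bm 1}_n\otimes\bar x_k$ and letting $\bsv^\star$ be an equilibrium of the dual recursion \eqref{sgu:kia-algo-dc-compact-v}, that analysis works with a Lyapunov function that schematically combines
\[
\mathcal V_k=\mathbf{E}[f(\bar x_k)-f^*]+a\,\mathbf{E}[\|\bsx_k-\bar{\bsx}_k\|^2]+b\,\mathbf{E}[\|\bsv_k-\bsv^\star\|^2]+c\,\mathbf{E}[\langle\bsx_k-\bar{\bsx}_k,\,\bsv_k-\bsv^\star\rangle],
\]
with $a,b,c>0$ chosen (using $\kappa_1>c_1$, $\kappa_2<c_2(\kappa_1)$) so that the primal--dual block is contractive, and yields a drift of the form
\[
\mathcal V_{k+1}\le\mathcal V_k-c_3\,\eta_k\,\mathbf{E}[\|\nabla f(\bar x_k)\|^2]-c_4\,\mathbf{E}[\|\bsx_k-\bar{\bsx}_k\|^2]+c_5\,\frac{\eta_k^2\sigma^2}{n}+c_6\,\eta_k^2\sigma^2
\]
for constants $c_3,\dots,c_6>0$ that do not depend on $T$.

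The genuinely new step is to invoke the P--{\L} inequality \eqref{nonconvex:equ:plc} in the form $\mathbf{E}[\|\nabla f(\bar x_k)\|^2]\ge2\nu\,\mathbf{E}[f(\bar x_k)-f^*]$, and to use that the consensus/dual part of $\mathcal V_k$ contracts at a rate $\Theta(1)$ (again since $\eta_k\alpha_k$ and $\eta_k\beta_k$ are constant). Scaling the weights $a,b,c$ down by a factor $\Theta(\eta_k/n)$ so that the $\Theta(\eta_k\nu)$ contraction of the function-value part and the $\Theta(1)$ contraction of the consensus part become compatible, the $-c_3\eta_k\|\nabla f\|^2$ and $-c_4\|\bsx_k-\bar{\bsx}_k\|^2$ terms jointly dominate $c_7\eta_k\mathcal V_k$, giving a genuine contraction
\[
\mathcal V_{k+1}\le(1-c_7\,\eta_k)\,\mathcal V_k+c_8\,\frac{\eta_k^2\sigma^2}{n}+c_9\,\eta_k^3\sigma^2 .
\]
Because $\eta_k\equiv1/(T+1)^\theta$ is constant over $k\le T$, unrolling from $k=0$ to $k=T$ and summing the geometric series yields $\mathcal V_T\le(1-c_7\eta)^{T}\mathcal V_0+\tfrac{c_8\eta\sigma^2}{c_7 n}+\tfrac{c_9\eta^2\sigma^2}{c_7}$, with $\mathcal V_0$ bounded by a $T$-independent constant.

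To conclude, substitute $\eta=1/(T+1)^\theta$. Since $T\ge(T+1)/2$ for $T\ge1$ we get $(1-c_7\eta)^T\le\exp(-c_7T(T+1)^{-\theta})\le\exp(-\tfrac{c_7}{2}(T+1)^{1-\theta})$, which for $\theta\in(0,1)$ decays faster than any power of $T$; hence $(1-c_7\eta)^T\mathcal V_0=o(T^{-m})$ for every $m$, in particular $\mathcal{O}(T^{-2\theta})$, while $\tfrac{c_8\eta\sigma^2}{c_7n}=\mathcal{O}(1/(nT^\theta))$ and $\tfrac{c_9\eta^2\sigma^2}{c_7}=\mathcal{O}(1/T^{2\theta})$. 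Since $\mathbf{E}[f(\bar x_T)-f^*]\le\mathcal V_T$, this gives \eqref{sguT:thm-sg-equ2bounded}. For \eqref{sguT:thm-sg-equ2.1bounded} I would isolate the consensus/dual sub-Lyapunov $W_k=a\,\mathbf{E}[\|\bsx_k-\bar{\bsx}_k\|^2]+b\,\mathbf{E}[\|\bsv_k-\bsv^\star\|^2]+c\,\mathbf{E}[\langle\bsx_k-\bar{\bsx}_k,\,\bsv_k-\bsv^\star\rangle]$; it satisfies $W_{k+1}\le(1-\rho)W_k+\mathcal{O}(\eta^2 n\sigma^2)$ with $\rho=\Theta(1)$, the noise size $\eta^2 n\sigma^2$ coming from $\eta^2\sum_{i=1}^n\mathbf{E}\|g^u_{i,k}-\nabla f_i(x_{i,k})\|^2\le\eta^2 n\sigma^2$, so $W_T\le(1-\rho)^T\mathcal{O}(1)+\mathcal{O}(\eta^2 n\sigma^2)=\mathcal{O}(n/T^{2\theta})$, i.e. $\mathbf{E}[\tfrac1n\sum_{i=1}^n\|x_{i,T}-\bar x_T\|^2]=\mathcal{O}(1/T^{2\theta})$. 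The main obstacle is pure bookkeeping --- verifying that the Lyapunov weights (hence $\mathcal V_0$) stay bounded independently of $T$ and that the primal--dual block keeps a $T$-independent contraction rate under $\kappa_1>c_1$, $\kappa_2<c_2(\kappa_1)$ --- all of which is inherited from the proof of Theorem~\ref{sguT:thm-sg-smT}; the only new ideas are the P--{\L} rearrangement and the elementary fact that a constant step over a horizon of length $T$ makes the initial-condition transient super-polynomially small.
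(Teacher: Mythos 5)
Your proposal is correct in its essential structure and follows the same skeleton as the paper's proof: reuse the drift inequalities behind Theorem~\ref{sguT:thm-sg-smT} (valid because $\beta_k\equiv\kappa_2(T+1)^\theta\ge c_0(\kappa_1,\kappa_2)$ under the stated lower bound on $T$), substitute the P--{\L} inequality to convert $-\|\nabla f(\bar x_k)\|^2$ into $-2\nu(f(\bar x_k)-f^*)$, unroll a geometric recursion with the constant step $\eta=1/(T+1)^\theta$, observe that the transient $(1-\Theta(\eta))^T\le e^{-\Theta(T^{1-\theta})}$ is super-polynomially small (the paper packages this as Lemma~\ref{sgu:lemma:exponential}), and run a separate $\Theta(1)$-rate contraction on the consensus/dual sub-Lyapunov to get \eqref{sguT:thm-sg-equ2.1bounded}. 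Where you differ is in how the $1/n$ factor in \eqref{sguT:thm-sg-equ2bounded} is extracted: you rescale the consensus/dual/cross blocks by $\Theta(\eta/n)$ so that a single pass yields the contraction $\mathcal V_{k+1}\le(1-c_7\eta)\mathcal V_k+\mathcal{O}(\eta^2\sigma^2/n)+\mathcal{O}(\eta^3\sigma^2)$, whereas the paper keeps unit weights ($W_k=\sum_{i=1}^4W_{i,k}$ with $W_{4,k}=n(f(\bar x_k)-f^*)$) and bootstraps in three stages: a crude pass giving only $\mathbf{E}[f(\bar x_k)-f^*]=\mathcal{O}(1/T^\theta)$ but, crucially, uniform boundedness $\mathbf{E}[\|\bsx_k\|^2_{\bsK}+W_{4,k}]\le nc_f$; then the $\breve W_k$ pass for the consensus rate; then a final pass on $W_{4,k}$ alone feeding in the refined consensus bound. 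Your route buys a shorter argument at the price of re-verifying the positivity/norm-equivalence of the reweighted Lyapunov function and re-balancing the cross-leakage between the $\Theta(\eta\nu)$ and $\Theta(1)$ contracting blocks; the paper's route reuses Lemma~\ref{sguT:lemma:sg2} verbatim at each stage.

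One concrete omission you should repair: in your consensus pass you attribute the entire additive term to the stochastic-gradient variance, $\eta^2\sum_{i=1}^n\mathbf{E}\|g^u_{i,k}-\nabla f_i(x_{i,k})\|^2\le\eta^2 n\sigma^2$. The recursion for the consensus/dual sub-Lyapunov (the paper's \eqref{sguT:vkLya2-1-3-speed}) also carries a deterministic drive of order $\eta^2\|\bar{\bsg}_k^0\|^2=\eta^2 n\|\nabla f(\bar x_k)\|^2$, which is not noise and is not a priori $\mathcal{O}(\eta^2 n)$; it must first be bounded via $\|\nabla f(\bar x_k)\|^2\le 2L_f(f(\bar x_k)-f^*)$ together with a uniform-in-$k$ bound on $\mathbf{E}[f(\bar x_k)-f^*]$ (the paper's constant $c_f$; in your scheme, the bound $\mathcal V_k=\mathcal{O}(1)$ from your first pass). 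With that step inserted the extra term is absorbed into the same $\mathcal{O}(\eta^2 n)$ floor and your conclusion $W_T=\mathcal{O}(n/T^{2\theta})$ stands, so the gap is one of bookkeeping rather than of substance --- but as written the claimed recursion $W_{k+1}\le(1-\rho)W_k+\mathcal{O}(\eta^2n\sigma^2)$ is not justified.
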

\begin{proof}
The explicit expressions of the right-hand sides of \eqref{sguT:thm-sg-equ2.1bounded} and \eqref{sguT:thm-sg-equ2bounded}, and the proof are given in Appendix~\ref{sguT:proof-thm-sg-diminishingT}. It should be highlighted that the omitted constants in the first term in the right-hand side of \eqref{sguT:thm-sg-equ2bounded} do not depend on any parameters related to the communication network.
\end{proof}

From Theorem~\ref{sguT:thm-sg-diminishingT}, we see that the convergence rate is strictly greater than $\mathcal{O}(1/(nT))$. In the following we show that the linear speedup convergence rate $\mathcal{O}(1/(nT))$ can be achieved if the P--{\L} constant $\nu$ is known in advance and each $f_i^*>-\infty$, where $f_i^*=\min_{x\in\mathbb{R}^p}f_i(x)$. The total number of iterations $T$ is not needed.
\begin{theorem}[\bf Linear Speedup]\label{sgu:thm-sg-diminishingt}
Suppose Assumptions~\ref{sgd:assgraph}--\ref{sgd:assfil} hold, and the P--{\L} constant $\nu$ is known in advance, and each $f_i^*>-\infty$.
Let $\{\bsx_{k}\}$ be the sequence generated by Algorithm~\ref{sguT:algorithm-sg} with \begin{align}\label{sgu:step:eta1t1}
\alpha_k=\kappa_1\beta_k,~\beta_k=\kappa_0(k+t_1),~ \eta_k=\frac{\kappa_2}{\beta_k},~\forall k\in\mathbb{N}_0,
\end{align}
where $\kappa_0\in[\hat{c}_0\nu\kappa_2/4,\nu\kappa_2/4)$, $\kappa_1>c_1$, $\kappa_2\in(0,\hat{c}_2(\kappa_1))$, and $t_1>\hat{c}_3(\kappa_0,\kappa_1,\kappa_2)$ with $\hat{c}_0\in(0,1)$ being a constant, $\hat{c}_2(\kappa_1)$ and $\hat{c}_3(\kappa_0,\kappa_1,\kappa_2)$ defined in Appendix~\ref{sgu:proof-thm-sg-diminishingt}.
Then, for any $T\in\mathbb{N}_+$,
\begin{subequations}
\begin{align}
&\mathbf{E}\Big[\frac{1}{n}\sum_{i=1}^{n}\|x_{i,T}-\bar{x}_T\|^2\Big]
=\mathcal{O}(\frac{1}{T^{2}}), \label{sgu:thm-sg-diminishing-equ2.1bounded}\\
&\mathbf{E}[f(\bar{x}_{T})-f^*]
=\mathcal{O}(\frac{1}{nT})+\mathcal{O}(\frac{1}{T^{2}}).
\label{sgu:thm-sg-diminishing-equ2bounded}
\end{align}
\end{subequations}
\end{theorem}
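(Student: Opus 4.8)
The plan is to combine the descent-type recursion for the primal--dual dynamics with the P--{\L}-driven linear contraction, now sharpened by the time-varying parameters $\beta_k = \kappa_0(k+t_1)$ so that the effective stepsize behaves like $\eta_k\beta_k \sim 1/k$, the classical schedule that yields $\mathcal{O}(1/(nT))$ for strongly-convex-type problems. First I would set up a Lyapunov function of the form $V_k = \mathbf{E}[f(\bar{x}_k) - f^*] + a_k\,\mathbf{E}[\|\bsx_k - \bsone_n\otimes\bar{x}_k\|^2] + b_k\,\mathbf{E}[\|\bsv_k - \bsv^\star\|^2]$ for suitable weight sequences $a_k, b_k$ (possibly $\mathcal{O}(\beta_k^2)$ or $\mathcal{O}(1)$), mirroring the construction already used to prove Theorems~\ref{sguT:thm-sg-smT} and~\ref{sguT:thm-sg-diminishingT}; the key new accounting is that $\beta_k$ now grows linearly in $k$, so cross terms that were previously bounded by constants must be re-estimated with their $k$-dependence made explicit.

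The core chain of steps: (1) derive the one-step inequality for $\mathbf{E}[f(\bar{x}_{k+1})-f^*]$ using $L_f$-smoothness of $f$, the unbiasedness (Assumption~\ref{sgd:ass:stochastic-grad:mean}) and bounded-variance (Assumption~\ref{sgd:ass:stochastic-grad:variance}) of the stochastic gradients, and the P--{\L} inequality \eqref{nonconvex:equ:plc} to convert $-\|\nabla f(\bar{x}_k)\|^2$ into $-2\nu(f(\bar{x}_k)-f^*)$, producing a contraction factor $(1 - c\,\nu\eta_k)$ where $\eta_k = \kappa_2/\beta_k = \kappa_2/(\kappa_0(k+t_1)) \sim 1/(\nu k)$ up to the constant controlled by $\kappa_0 \in [\hat c_0\nu\kappa_2/4, \nu\kappa_2/4)$; (2) derive the consensus-error recursion for $\mathbf{E}[\|\bsx_{k}-\bsone_n\otimes\bar{x}_k\|^2]$, showing it is driven by the gradient noise $\sigma^2$ and by $\|\nabla\tilde f\|^2$ terms, and crucially showing it decays like $\mathcal{O}(1/\beta_k^2) = \mathcal{O}(1/k^2)$ — this is where $f_i^* > -\infty$ enters, since bounding $\frac1n\sum_i\|\nabla f_i(x_{i,k})\|^2$ without a bounded-gradient assumption requires using smoothness together with $f_i(x) - f_i^* \le$ (descent-lemma bound) to control $\|\nabla f_i(x_{i,k})\|^2 \le 2L_f(f_i(x_{i,k}) - f_i^*)$; (3) combine the two recursions into the Lyapunov inequality $V_{k+1} \le (1 - c\nu\eta_k)V_k + \mathcal{O}(\eta_k^2\sigma^2/n) + \mathcal{O}(1/k^3)$ for $k \ge$ some threshold fixed by $t_1 > \hat c_3$; (4) solve this scalar recursion by the standard induction argument: with $\eta_k \sim 1/(\nu k)$ the homogeneous part contracts like $\prod(1-c/k) \sim k^{-c}$ and, choosing $\kappa_0$ so that $c > 1$, the particular solution from the $\mathcal{O}(1/(n k^2))$ noise term sums to $\mathcal{O}(1/(nT))$ while the $\mathcal{O}(1/k^3)$ term sums to $\mathcal{O}(1/T^2)$, giving \eqref{sgu:thm-sg-diminishing-equ2bounded}; (5) read off \eqref{sgu:thm-sg-diminishing-equ2.1bounded} from the consensus recursion's $\mathcal{O}(1/\beta_T^2) = \mathcal{O}(1/T^2)$ bound.

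The main obstacle I expect is step (2)/(4): getting the consensus error and the dual error to decay at rate $\mathcal{O}(1/k^2)$ rather than merely $\mathcal{O}(1/k)$, because the gradient terms feeding the consensus dynamics are not assumed bounded, so one must run a \emph{coupled} induction where the bound on $\mathbf{E}[f(\bar x_k)-f^*]$ (hence on the $f_i(x_{i,k})-f_i^*$ surrogates for $\|\nabla f_i(x_{i,k})\|^2$) is used to close the consensus estimate, and vice versa. Making the constants $\hat c_0, \hat c_2(\kappa_1), \hat c_3(\kappa_0,\kappa_1,\kappa_2)$ explicit enough that the induction hypothesis propagates — in particular ensuring the contraction rate $c\nu\eta_k$ dominates the ``drift'' coming from the time-variation of the Lyapunov weights $a_k,b_k$ (terms like $(a_{k+1}-a_k)/a_k = \mathcal{O}(1/k)$) — is the delicate bookkeeping. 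Once the coupled induction is set up correctly, the final summation is routine: it is the same $\sum_k (1/k^2)\cdot k^{-c}\cdot(\text{telescoping})$ estimate that underlies all $\mathcal{O}(1/(nT))$ SGD rates, and the linear speedup is visible because the $\sigma^2/n$ factor rides along untouched through every step.
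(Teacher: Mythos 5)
Your plan follows essentially the same route as the paper's proof: a primal--dual Lyapunov function coupling the function-value gap, consensus error, and dual error (the paper's $W_k=\sum_{i=1}^4 W_{i,k}$ with the moving dual target $\bsv_k+\frac{1}{\beta_k}\bsg_k^0$ playing the role of your $\bsv_k-\bsv^\star$), the P--{\L} inequality to turn $-\|\nabla f(\bar x_k)\|^2$ into a contraction $(1-\mathcal{O}(\nu\eta_k))$, the bound $\|\nabla f_i(\bar x_k)\|^2\le 2L_f(f_i(\bar x_k)-f_i^*)$ as the precise place where $f_i^*>-\infty$ enters, and a bootstrapping argument (first crude boundedness from the full recursion, then the constant-rate contraction of the consensus/dual part against an $\mathcal{O}(n\eta_k^2)$ drive to get $\mathcal{O}(1/T^2)$, then the $W_4$ recursion with $\nu\kappa_2/(2\kappa_0)>2$ to extract $\mathcal{O}(1/(nT))+\mathcal{O}(1/T^2)$). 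The coupled induction you flag as the main obstacle is exactly the paper's sequential two-stage estimate in Appendix~\ref{sgu:proof-thm-sg-diminishingt}, so the proposal is correct in structure and approach.
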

\begin{proof}
The explicit expressions of the right-hand sides of \eqref{sgu:thm-sg-diminishing-equ2.1bounded} and \eqref{sgu:thm-sg-diminishing-equ2bounded}, and the proof are given  in Appendix~\ref{sgu:proof-thm-sg-diminishingt}. It should be highlighted that the omitted constants in the first term in the right-hand side of  \eqref{sgu:thm-sg-diminishing-equ2bounded} do not depend on any parameters related to the communication network.
\end{proof}
\begin{remark}
It has been shown in \cite{rakhlin2011making} that $\mathcal{O}(1/T)$ convergence rate is optimal for centralized strongly convex optimization. This rate has been established by various distributed SGD algorithms when each local cost function is strongly convex, e.g., \cite{reisizadeh2019fedpaq,jiang2017collaborative,rabbat2015multi,lan2018communication,
yuan2018optimal,jakovetic2018convergence,fallah2019robust}. In contrast, the linear speedup convergence rate $\mathcal{O}(1/(nT))$ established in Theorem~\ref{sgu:thm-sg-diminishingt} only requires that the global cost function satisfies the P--{\L} condition, but no convexity assumption is required neither on the global cost function nor on the local cost functions. The SGD algorithms in \cite{haddadpour2019local,Yu2019Computation,stich2018local,Koloskova2019decentralized,
Singh2020Communication,olshevsky2019non} also achieve the linear speedup convergence rate.
However, the algorithms in \cite{haddadpour2019local,Yu2019Computation,stich2018local} are restricted to a star graph, while our algorithm is applicable to an arbitrarily connected graph.
Moreover, \cite{haddadpour2019local,Yu2019Computation} assumed that the mean of each local stochastic gradient is the gradient of the global cost function, and $T$ has to be known to choose the algorithm parameters. The algorithm in \cite{Yu2019Computation} furthermore requires an exponentially increasing batch size, which is not favorable in practice. In \cite{stich2018local}, it was assumed that the global cost function is strongly convex. In \cite{stich2018local,Singh2020Communication}, it was assumed that  each local stochastic gradient has bounded second moment. In \cite{Koloskova2019decentralized,Singh2020Communication,
olshevsky2019non}, it was assumed that each local cost function is strongly convex. It is one of our future research directions to achieve linear speedup with reduced communication rounds and communication efficiency for an  arbitrarily connected graph.
\end{remark}


Theorem~\ref{sgu:thm-sg-diminishingt} shows that the convergence rate to a global optimum is sublinear when we allow the algorithm parameters to be time-varying.
The following theorem establishes that the output of Algorithm~\ref{sguT:algorithm-sg} with constant algorithm parameters linearly converges to a neighborhood of a global optimum.

\begin{theorem}\label{sg:thm-sg-fixed-a5}
Suppose Assumptions~\ref{sgd:assgraph}--\ref{sgd:assfil} hold. Let $\{\bsx_{k}\}$ be the sequence generated by Algorithm~\ref{sguT:algorithm-sg} with \begin{align}\label{sgu:step:fixed-a5}
\alpha_k=\alpha=\kappa_1\beta,~\beta_k=\beta,~ \eta_k=\eta=\frac{\kappa_2}{\beta},~\forall k\in\mathbb{N}_0,
\end{align}
where $\kappa_1>c_1$, $\kappa_2\in(0,c_2(\kappa_1))$, and $\beta\ge c_0(\kappa_1,\kappa_2)$ with $c_0(\kappa_1,\kappa_2),~c_1,~c_2(\kappa_1)>0$  defined in Appendix~\ref{sguT:proof-thm-sg-smT}. Then,
\begin{align}\label{sg:thm-sg-fixed-equ1-a5}
&\mathbf{E}\Big[\frac{1}{n}\sum_{i=1}^{n}\|x_{i,k}-\bar{x}_k\|^2+f(\bar{x}_k)-f^*\Big]
\le(1-\eta\varepsilon)^{k}c_4+c_5\eta\sigma^2,~\forall k\in\mathbb{N}_+,
\end{align}
where $\varepsilon\in(0,1/\eta),~c_4,~c_5>0$ are constants defined in Appendix~\ref{sg:proof-thm-sg-fixed-a5}.
\end{theorem}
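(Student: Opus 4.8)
The plan is to set up a Lyapunov function that combines the consensus error $\frac{1}{n}\sum_{i=1}^{n}\|x_{i,k}-\bar x_k\|^2$, a quadratic in the disagreement and dual variables, and the optimality gap $f(\bar x_k)-f^*$, show it contracts geometrically up to an irreducible $\mathcal{O}(\eta^2\sigma^2)$ term, and then sum the resulting geometric series. Since the parameter rule \eqref{sgu:step:fixed-a5} is precisely the constant-parameter specialization of \eqref{sguT:step:eta2-sm}, the per-iteration estimates for the consensus error and for $\mathbf{E}[f(\bar x_{k+1})]-\mathbf{E}[f(\bar x_k)]$ established in Appendix~\ref{sguT:proof-thm-sg-smT} (together with the ranges of $\kappa_1,\kappa_2,\beta$ certified there through $c_0,c_1,c_2$) transfer here unchanged; the one genuinely new ingredient is the P--{\L} condition (Assumption~\ref{sgd:assfil}), which converts an averaged bound on $\|\nabla f(\bar x_k)\|^2$ into a linear rate for the gap.

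Concretely I would proceed as follows. First, averaging \eqref{sgu:kia-algo-dc-compact-x} over the agents and using $\bsone_n^\top L=\bszero_n^\top$ together with the preserved identity $\sum_{j=1}^{n}v_{j,0}=\bszero_p$, the Laplacian and $\bsv$ terms cancel, so $\bar x_{k+1}=\bar x_k-\eta\,\frac{1}{n}\sum_{i=1}^{n}g^u_{i,k}$ is a perturbed centralized SGD step; $L_f$-smoothness of $f$, Assumptions~\ref{sgd:ass:stochastic-grad:mean}--\ref{sgd:ass:stochastic-grad:variance}, and Young's inequality applied to $\nabla f_i(x_{i,k})-\nabla f_i(\bar x_k)$ then yield a descent inequality of the form
\begin{align*}
\mathbf{E}[f(\bar x_{k+1})]\le\mathbf{E}[f(\bar x_k)]-\frac{\eta}{2}\mathbf{E}\|\nabla f(\bar x_k)\|^2+c\,\eta L_f^2\,\mathbf{E}\Big[\frac{1}{n}\sum_{i=1}^{n}\|x_{i,k}-\bar x_k\|^2\Big]+\frac{\eta^2L_f\sigma^2}{2n}.
\end{align*}
Second, projecting \eqref{sgu:kia-algo-dc-compact-x}--\eqref{sgu:kia-algo-dc-compact-v} onto $\bsone_n^\perp\otimes\mathbb{R}^p$ and working with the disagreement variable $\bse_k=\bsx_k-\bsone_n\otimes\bar x_k$ and $\bsv_k$ recentered at a KKT reference $\bsv^*$, one shows --- using the smallest positive eigenvalue $\rho_2$ of $L$ --- that a suitably weighted quadratic $W_k=a_1\|\bse_k\|^2+a_2\|\bsv_k-\bsv^*\|^2+2a_3\bse_k^\top(\bsv_k-\bsv^*)$ satisfies
\begin{align*}
\mathbf{E}[W_{k+1}]\le(1-c'\eta\beta\rho_2)\,\mathbf{E}[W_k]+c''\eta^2\big(\mathbf{E}\|\nabla f(\bar x_k)\|^2+\sigma^2\big),
\end{align*}
the cross term being what renders $W_k$ both positive definite and strictly contractive once $\kappa_1>c_1$ and $\kappa_2<c_2(\kappa_1)$; note $\eta\beta\rho_2=\kappa_2\rho_2$ is a $\beta$-independent contraction. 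Third, invoke the P--{\L} inequality \eqref{nonconvex:equ:plc} to turn $-\frac{\eta}{2}\|\nabla f(\bar x_k)\|^2$ into $-\nu\eta(f(\bar x_k)-f^*)$, and bound the $\|\nabla f(\bar x_k)\|^2$ appearing in the $W_k$-recursion by $2L_f(f(\bar x_k)-f^*)$. Fourth, form $V_k=(f(\bar x_k)-f^*)+\delta W_k$ with a constant $\delta>0$ chosen (using $\beta\ge c_0(\kappa_1,\kappa_2)$, hence $\eta$ small) so that the consensus term in the first inequality is absorbed by the contraction of $W_k$ and the residual gap term in the second by the P--{\L} term; this gives
\begin{align*}
\mathbf{E}[V_{k+1}]\le(1-\eta\varepsilon)\,\mathbf{E}[V_k]+\tilde c\,\eta^2\sigma^2
\end{align*}
for some constant $\varepsilon\in(0,1/\eta)$. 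Finally, unrolling, $\mathbf{E}[V_k]\le(1-\eta\varepsilon)^kV_0+\tilde c\,\eta^2\sigma^2\sum_{j=0}^{k-1}(1-\eta\varepsilon)^j\le(1-\eta\varepsilon)^kV_0+\tilde c\,\eta\sigma^2/\varepsilon$; bounding $V_0$ by a constant $c_4$ (depending only on $\bsx_0$, $\bsv_0$, $f(\bar x_0)-f^*$), setting $c_5=\tilde c/\varepsilon$, and using that $V_k$ upper-bounds a positive multiple of $\frac{1}{n}\sum_{i=1}^{n}\|x_{i,k}-\bar x_k\|^2+f(\bar x_k)-f^*$ (by positive definiteness of $W_k$, after absorbing the multiple into $c_4,c_5$), one obtains \eqref{sg:thm-sg-fixed-equ1-a5}.

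The hard part will be the disagreement step and the accompanying weight selection: the cross term $\beta\bsu^\top\bsL^{1/2}\bsx$ in the augmented Lagrangian \eqref{nonconvex:lagran} is indefinite, so neither $\|\bse_k\|^2$ nor $\|\bsv_k-\bsv^*\|^2$ contracts on its own, and one must choose $a_1,a_2,a_3$ and the admissible ranges of $\kappa_1,\kappa_2,\beta$ so that the composite quadratic form is simultaneously positive definite, strictly decreasing, and slack enough to soak up the $\mathcal{O}(\eta)$ primal--dual couplings injected by the averaged descent step --- exactly the analysis that fixes $c_0,c_1,c_2$ in Appendix~\ref{sguT:proof-thm-sg-smT}. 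The part specific to this theorem, and the easier one, is only to check that the now non-vanishing variance enters at order $\eta^2\sigma^2$ per iteration and hence telescopes to the stated $\mathcal{O}(\eta\sigma^2)$ floor rather than accumulating, which is immediate from the geometric factor $1-\eta\varepsilon\in(0,1)$.
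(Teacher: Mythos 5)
Your overall architecture is the same as the paper's: a four-part Lyapunov function consisting of the consensus error, a quadratic in the (recentered) dual variable, an indefinite cross term between them, and the optimality gap $n(f(\bar x_k)-f^*)$; a one-step contraction up to an $\mathcal{O}(\eta^2\sigma^2)$ injection; the P--{\L} inequality to convert $-\tfrac{\eta}{4}\|\nabla f(\bar x_k)\|^2$ into $-\tfrac{\nu\eta}{2}(f(\bar x_k)-f^*)$; and a geometric unrolling giving the $\mathcal{O}(\eta\sigma^2/\varepsilon)$ floor. Indeed the paper's proof of this theorem is only a few lines on top of the machinery of Appendix~\ref{sguT:proof-thm-sg-smT}, exactly as you anticipate.

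There is, however, one concrete gap in the step you yourself identify as the hard one. You recenter the dual variable at a \emph{fixed} KKT reference $\bsv^*$ (tied to a minimizer $x^*$). When you then expand the drift $\beta\bsv_k+\bsK\bsg_k^u$ in the disagreement dynamics around this reference, you are left with a perturbation of the form $\bsK(\nabla\tilde f(\bsx_k)-\nabla\tilde f(\bsone_n\otimes x^*))$, whose norm is only bounded by $L_f\|\bsx_k-\bsone_n\otimes x^*\|$, i.e.\ it contains $\|\bar x_k-x^*\|$. Under Assumption~\ref{sgd:assfil} alone this quantity is \emph{not} controlled by your Lyapunov function: the P--{\L} condition bounds the distance to the (possibly non-unique, non-singleton) solution set, not to a fixed $x^*$, and even on that set the individual gradients $\nabla f_i$ need not coincide, so there is no single well-defined multiplier $\bsv^*$ to center on. The paper sidesteps this entirely by recentering at the \emph{moving} point $-\frac{1}{\beta}\bsg_k^0=-\frac{1}{\beta}\nabla\tilde f(\bar\bsx_k)$ (the terms $W_{2,k}$ and $W_{3,k}$), so that every perturbation reduces either to $\|\bsg_k-\bsg_k^0\|\le L_f\|\bsx_k\|_{\bsK}$ (pure consensus error) or to $\|\bsg_{k+1}^0-\bsg_k^0\|\le L_f\eta\|\bar\bsg_k^u\|$ (controlled by the averaged step), at the cost of extra drift terms in the $W_2$ and $W_3$ recursions that the constants $c_0,c_1,c_2$ are designed to absorb. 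Replacing your fixed $\bsv^*$ by this moving reference repairs the argument and lands you on the paper's proof; with the fixed reference the contraction inequality for your quadratic $W_k$ cannot be closed without additionally assuming something like (restricted) strong convexity.
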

\begin{proof}
The proof is given in Appendix~\ref{sg:proof-thm-sg-fixed-a5}.
\end{proof}
\begin{remark}
It should be highlighted that we  do not need to know the P--{\L} constant $\nu$ in advance. Similar convergence result as stated in \eqref{sg:thm-sg-fixed-equ1-a5} was achieved by the distributed SGD algorithms proposed in \cite{jiang2017collaborative,fallah2019robust,pu2018swarming,pu2018distributed,xin2019distributed} when each local cost function is strongly convex, which obviously is stronger than the P--{\L} condition assumed in Theorem~\ref{sg:thm-sg-fixed-a5}. In addition to the strong convexity condition, in \cite{jiang2017collaborative}, it was also assumed that each local cost function is Lipschitz-continuous. Some information related to the Lyapunov function and global parameters, which may be difficult to get, were furthermore needed to design the stepsize. Moreover, in \cite{fallah2019robust,pu2018swarming,pu2018distributed,xin2019distributed}, the strong convexity constant was needed to design the stepsize and in \cite{pu2018distributed,xin2019distributed}, a $p$-dimensional auxiliary variable, which is used to track the global gradient, was communicated between agents. The potential drawbacks of the results stated in Theorem~\ref{sg:thm-sg-fixed-a5} are that (i) we use undirected graphs rather than directed graphs as considered in \cite{xin2019distributed}; and (ii) we do not analyze the robustness level to gradient noise as \cite{fallah2019robust} did. We leave the extension to the (time-varying) directed graphs and the robustness level analysis as future research directions.
\end{remark}

Actually, the unbiased assumption, i.e., Assumption~\ref{sgd:ass:stochastic-grad:mean}, can be removed, as shown in the following.

\begin{theorem}[\bf Biased SGD]\label{sg:thm-sg-fixed}
Suppose Assumptions~\ref{sgd:assgraph}--\ref{sgd:ass:stochastic-grad:xi}, \ref{sgd:ass:stochastic-grad:variance}, and \ref{sgd:assfil} hold. Let $\{\bsx_{k}\}$ be the sequence generated by Algorithm~\ref{sguT:algorithm-sg} with
\begin{align}\label{sgu:step:fixed}
\alpha_k=\alpha=\kappa_1\beta,~\beta_k=\beta,~ \eta_k=\eta=\frac{\kappa_2}{\beta},~\forall k\in\mathbb{N}_0,
\end{align}
where $\kappa_1>c_1$, $\kappa_2\in(0,c_2(\kappa_1))$, and $\beta\ge \breve{c}_0(\kappa_1,\kappa_2)$ with $\breve{c}_0(\kappa_1,\kappa_2)>0$ and $c_1,~c_2(\kappa_1)>0$  defined in Appendices~\ref{sg:proof-thm-sg-fixed} and \ref{sguT:proof-thm-sg-smT}, respectively. Then,
\begin{align}\label{sg:thm-sg-fixed-equ1}
&\mathbf{E}[\frac{1}{n}\sum_{i=1}^{n}\|x_{i,k}-\bar{x}_k\|^2+f(\bar{x}_k)-f^*]
\le(1-\eta\varepsilon)^{k}c_4+\breve{c}_5\sigma^2,~\forall k\in\mathbb{N}_+,
\end{align}
where $\varepsilon\in(0,1/\eta),~c_4>0$ and $\breve{c}_5>0$ are constants defined in Appendices~\ref{sg:proof-thm-sg-fixed-a5} and \ref{sg:proof-thm-sg-fixed}, respectively.
\end{theorem}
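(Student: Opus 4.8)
The plan is to mirror the argument for Theorem~\ref{sg:thm-sg-fixed-a5} and reuse as much of the Lyapunov machinery from Appendix~\ref{sg:proof-thm-sg-fixed-a5} as possible, changing only the places where the unbiasedness of the stochastic gradient was invoked. First I would write $g^u_{i,k}=\nabla f_i(x_{i,k})+e_{i,k}$, so that the only stochastic object entering the recursion \eqref{sgu:kia-algo-dc-compact} is the noise vector $\bse_k=\col(e_{1,k},\dots,e_{n,k})$; under Assumption~\ref{sgd:ass:stochastic-grad:variance} we still have $\mathbf{E}_{\xi_{i,k}}[\|e_{i,k}\|^2]\le\sigma^2$, but now $\mathbf{E}_{\xi_{i,k}}[e_{i,k}]$ need not vanish — it is simply some vector of norm at most $\sigma$ (by Jensen, $\|\mathbf{E}_{\xi_{i,k}}[e_{i,k}]\|\le\sigma$). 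I would then revisit the one-step descent inequality for the Lyapunov function used in Appendix~\ref{sg:proof-thm-sg-fixed-a5} (a combination of the consensus error $\frac1n\sum_i\|x_{i,k}-\bar x_k\|^2$, the dual/drift term, and $f(\bar x_k)-f^*$) and isolate every term that is linear in $\bse_k$; in the unbiased case these vanish in conditional expectation, whereas here each must instead be bounded by a cross term of the form $(\text{small factor})\cdot(\text{Lyapunov term})+(\text{large factor})\cdot\sigma^2$ via Young's inequality.

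Concretely, the key steps in order are: (1) re-derive the descent inequality conditionally on $\mathcal F_{k-1}$, keeping the biased linear terms explicit; (2) apply Young's inequality $a^\top b\le\frac{\rho}{2}\|a\|^2+\frac{1}{2\rho}\|b\|^2$ to each such term, choosing $\rho$ proportional to $\eta=\kappa_2/\beta$ so that the $\frac{\rho}{2}\|a\|^2$ piece is absorbed into the contraction (at the cost of shrinking the admissible range of $\kappa_2$ slightly and enlarging the threshold on $\beta$ to $\breve c_0(\kappa_1,\kappa_2)$), while the $\frac{1}{2\rho}\|b\|^2$ piece contributes an extra $\mathcal O(\eta)\cdot\sigma^2$ to the additive constant; (3) combine with the bias-free variance terms (which, as in the unbiased proof, already contribute $\mathcal O(\eta)\sigma^2$) to obtain a recursion of the form $V_{k+1}\le(1-\eta\varepsilon)V_k+\breve c_5\sigma^2$ in expectation; (4) unroll the recursion geometrically to get \eqref{sg:thm-sg-fixed-equ1}, identifying $c_4$ with the same initial-condition constant as in Theorem~\ref{sg:thm-sg-fixed-a5} and $\breve c_5$ with the new (larger, $\eta$-independent after the geometric sum — note $\sum_k(1-\eta\varepsilon)^k\cdot\mathcal O(\eta)\sigma^2=\mathcal O(1/\varepsilon)\sigma^2$) additive constant.

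The main obstacle I expect is step~(2): controlling the bias terms without destroying the contraction. The biased contribution enters the $\bsx$-update through $-\eta_k\bse_k$, and hence appears coupled both to $\bar x_k-x^*$ (in the $f(\bar x_k)-f^*$ part of the Lyapunov function, through smoothness/P--{\L}) and to the consensus and dual errors (through the $\bsL$-weighted quadratic terms). Each coupling must be split so that the ``bad'' half lands on a Lyapunov component that is already being contracted with a strictly positive rate margin; this forces a careful bookkeeping of how much contraction slack the unbiased proof left over, and it is where the constants $\breve c_0(\kappa_1,\kappa_2)$ and $\breve c_5$ genuinely differ from their Theorem~\ref{sg:thm-sg-fixed-a5} counterparts. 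Everything else — the algebraic structure of the Lyapunov function, the eigenvalue bounds on $\bsL$ from Assumption~\ref{sgd:assgraph}, the use of the P--{\L} condition (Assumption~\ref{sgd:assfil}) to convert $\|\nabla f(\bar x_k)\|^2$ into $f(\bar x_k)-f^*$, and the final geometric unrolling — goes through verbatim, since none of it used Assumption~\ref{sgd:ass:stochastic-grad:mean} except in killing the linear noise terms.
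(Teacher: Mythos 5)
Your proposal follows essentially the same route as the paper's proof in Appendix~\ref{sg:proof-thm-sg-fixed}: the paper observes that \eqref{sguT:v2k2} never used unbiasedness, re-derives the $W_1$, $W_3$, $W_4$ descent inequalities with the now-surviving linear noise terms bounded by Young's inequality (so the per-step additive term degrades from $\mathcal{O}(\eta^2)n\sigma^2$ to $\eta(3+5\eta)n\sigma^2$), and then applies the P--{\L} step and geometric unrolling verbatim from Theorem~\ref{sg:thm-sg-fixed-a5}, giving the $\eta$-independent neighborhood $\breve{c}_5\sigma^2$ with $\breve{c}_5=(3+5\eta)/(\varepsilon\kappa_6)$. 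The one slip is your prescription ``$\rho$ proportional to $\eta$'': each linear term already carries an explicit factor of $\eta$, so the correct split is $\eta a^\top e\le\frac{\eta}{2}\|a\|^2+\frac{\eta}{2}\|e\|^2$ with $\rho=\mathcal{O}(1)$ (as the paper does); taking $\rho\propto\eta$ literally would leave an $\mathcal{O}(1)\sigma^2$ residual per step and an $\mathcal{O}(1/\eta)\sigma^2$ neighborhood, although the consequences you actually state ($\mathcal{O}(\eta)\sigma^2$ per step, $\mathcal{O}(1/\varepsilon)\sigma^2$ after the geometric sum) correspond to the correct choice and match the theorem.
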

\begin{proof}
The proof is given in Appendix~\ref{sg:proof-thm-sg-fixed}.
\end{proof}

\begin{remark}
By comparing \eqref{sg:thm-sg-fixed-equ1-a5} with \eqref{sg:thm-sg-fixed-equ1}, we can see that no matter the unbiased assumption holds or not,  the output of Algorithm~\ref{sguT:algorithm-sg} with constant algorithm parameters linearly converges to a neighborhood of a global optimum, but the size of neighborhood is different. Specifically, in \eqref{sg:thm-sg-fixed-equ1-a5} the size of neighborhood is in an order of $\mathcal{O}(\eta)$, while it is $\mathcal{O}(1)$ in \eqref{sg:thm-sg-fixed-equ1}. When true gradients are available, i.e., $\sigma=0$, then from \eqref{sg:thm-sg-fixed-equ1-a5} or \eqref{sg:thm-sg-fixed-equ1} we know that a global
optimum can be linearly found. It should be highlighted that this linear convergence result is established under the P--{\L} condition and the P--{\L} constant is not used. These are two advantages since in existing studies obtaining linear convergence for distributed smooth optimization, e.g., \cite{shi2015extra,nedic2017achieving,li2020revisiting}, it is standard to assume (restricted) strong convexity, which is stronger than the P--{\L} condition, and to use the convexity parameter.
\end{remark}

\section{Simulations}\label{sgd:sec-simulation}


In this section, we evaluate the performance of the proposed distributed primal--dual SGD algorithm through numerical experiments.
All algorithms and agents are implemented and simulated in MATLAB R2018b, run on a desktop with \texttt{Intel Core i5-9600K processor, Nvidia RTX 2070 super, 32~GB RAM, Ubuntu 16.04}.

\subsection{Neural Networks}
We consider the training of neural networks (NN) for image classification tasks of the database MNIST \cite{lecun2010mnist}.
The same NN is adopted as in \cite{george2019distributed} for each agent and the communication graph is generated randomly. The graph is shown in Fig.~\ref{sgd:fig:graph} and the corresponding Laplacian matrix $\bm L$ is given in \eqref{sgd:lap}. The corresponding mixing matrix $\bm W$ is constructed by metropolis weight, which is given in \eqref{sgd:lapW}.

\begin{align} \label{sgd:lap}
\bm L =
\begin{bmatrix}
1 & -1 & 0 & 0 & 0 & 0 & 0 & 0 & 0 & 0\\
-1 & 3 & -1 & -1 & 0 & 0 & 0 & 0 & 0 & 0\\
0 & -1 & 3 & -1 & 0 & 0 & -1 & 0 & 0 & 0\\
0 & -1 & -1 & 4 & -1 & -1 & 0 & 0 & 0 & 0\\
0 & 0 & 0 & -1 & 2 & -1 &  0 & 0 & 0 & 0\\
0 & 0 & 0 & -1 & -1 & 2 &  0 & 0 & 0 & 0\\
0 & 0 & -1 & 0 & 0 & 0 &  2 & -1 & 0 & 0\\
0 & 0 & 0 & 0 & 0 & 0 &  -1 & 2 & -1 & 0\\
0 & 0 & 0 & 0 & 0 & 0 & 0 &  -1 & 2 & -1 \\
0 & 0 & 0 & 0 & 0 & 0 & 0 &  0 & -1 & 1\\
\end{bmatrix}.
\end{align}
\begin{align}\label{sgd:lapW}
 \bm W =
\begin{bmatrix}
3/4 & 1/4 & 0 & 0 & 0 & 0 & 0 & 0 & 0 & 0\\
1/4 & 3/10 & 1/4 & 1/5 & 0 & 0 & 0 & 0 & 0 & 0\\
0 & 1/4 & 3/10 & 1/5 & 0 & 0 & 1/4 & 0 & 0 & 0\\
0 & 1/5 & 1/5 & 1/5 & 1/5 & 1/5 & 0 & 0 & 0 & 0\\
0 & 0 & 0 & 1/5 & 7/15 & 1/3 &  0 & 0 & 0 & 0\\
0 & 0 & 0 & 1/5 & 1/3 & 7/15 &  0 & 0 & 0 & 0\\
0 & 0 & 1/4 & 0 & 0 & 0 &  5/12 & 1/3 & 0 & 0\\
0 & 0 & 0 & 0 & 0 & 0 &  1/3 & 1/3 & 1/3 & 0\\
0 & 0 & 0 & 0 & 0 & 0 & 0 &  1/3 & 1/3 & 1/3 \\
0 & 0 & 0 & 0 & 0 & 0 & 0 &  0 & 1/3 & 2/3\\
\end{bmatrix}.
\end{align}

Each local neural network consists of a single hidden layer of 50 neurons, followed by a sigmoid activation layer, followed by the output layer of 10 neurons and another sigmoid activation layer. In this experiment, we use a subset of MNIST data set. Each agent is assigned 2500 data points randomly, and at each iteration, only one data point is picked up by the agent following a uniform distribution.

We compare our proposed distributed primal--dual SGD algorithm with time-varying and fixed  parameters (DPD-SGD-T and DPD-SGD-F)  with state-of-the-art algorithms: distributed momentum SGD algorithm (DM-SGD) \cite{Yu2019on},  distributed SGD algorithm (D-SGD-1) \cite{jiang2017collaborative,lian2017can},  distributed SGD algorithm (D-SGD-2) \cite{george2019distributed},  $\mathrm{D}^2$ \cite{Tang2018Decentralized}, distributed stochastic gradient tracking algorithm (D-SGT-1)  \cite{lu2019gnsd,xin2019distributed},  distributed stochastic gradient tracking algorithm  (D-SGT-2) \cite{zhang2019decentralized,pu2018distributed}, and  the baseline centralized SGD algorithm (C-SGD). We list all the parameters\footnote{Note: the parameter names are different in each paper.} we choose in the NN experiment for each algorithm in Table~\ref{tab:nn-par}.

We demonstrate the result in terms of the empirical risk function \cite{bottou2012stochastic}, which is given as
\begin{align*}
R(\bm z) &= - \frac{1}{n}\sum_{i = 1}^{n}\frac{1}{m_{n}} \sum _{j=1}^{m_{n}} \sum _{k=0}^{9}(t_{k} \ln y_k(\bm{x}, \bm z) + (1-t_{k})\ln (1 - y_k(\bm{x}, \bm z)))
\end{align*}
where $m_{n}$ indicates the size of data set for each agent, $t_k$ denotes the target (ground truth) of digit $k$ corresponding to a single image, $\bm x$ is a single image input, $\bm z=(z^{(1)}, z^{(2)})$ with $z^{(1)}$ and $z^{(2)}$ being the weights in the 2 layers separately, and $y_k \in [0, 1]$ is the output which expresses the probability of digit $k = 0, \dots, 9$.
The mapping from input to output is given as:
\begin{align*}
y_k(\bm{x}, \bm z) = \sigma\left( \sum_{j=0}^{50} z_{k, j}^{(2)} \sigma \left( \sum_{i = 0}^{28 \times 28} z_{j, i}^{(1)}x_{i}\right)\right),
\end{align*}
where $\sigma (s) = \frac{1}{1+\exp(-s)}$ is the sigmoid function.

\begin{figure}
\centering
\hspace{10mm}\centering{
{
\begin{tikzpicture}[-,node distance=1.4cm,
  thick,main node/.style={circle,fill=yellow!20,draw,font=\sffamily\normalsize\bfseries}]
  \node[main node] (1) {1};
  \node[main node] (2) [above right of=1] {2};
  \node[main node] (3) [right of=2] {3};
  \node[main node] (4) [below right of=2] {4};
  \node[main node] (5) [below right of=1] {5};
  \node[main node] (6) [ below of=5] {6};
  \node[main node] (7) [ right of=3] {7};
  \node[main node] (8) [ below of=7] {8};
  \node[main node] (9) [ below of=8] {9};
  \node[main node] (10) [ left of=9] {10};
\draw (1) -- (2)
(2) -- (3)
(3) -- (7)
(2) -- (4)
(3) -- (4)
(4) -- (5)
(4) -- (6)
(5) -- (6)
(7) -- (8)
(8) -- (9)
(9) -- (10);
\end{tikzpicture}}}$\qquad\qquad$
\vspace{1mm}
\caption{\label{sgd:fig:graph} Connection Topology.}
\end{figure}
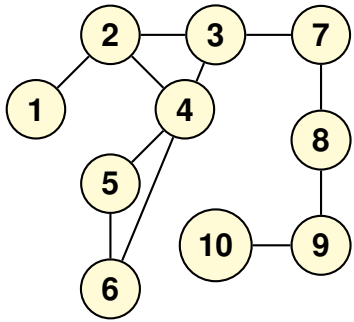

\begin{table*}[htbp]
\caption{{Parameters in each algorithm in NN experiment.} }
\label{tab:nn-par}
\vskip 0.15in
\begin{center}
\begin{normalsize}
\begin{tabular}{M{3.5cm}|M{3.5cm}|M{3.5cm}|M{3.5cm}}
\hline
Algorithm & $\eta_k$ & $\alpha_k$ & $\beta_k$ \\

\hline
DPD-SGD-T & $0.08/{k^{10^{-5}}}$ & $4k^{10^{-5}}$ & $3k^{10^{-5}}$\\

\hline
DPD-SGD-F &0.03 & 5 & 20\\

\hline
DM-SGD \cite{Yu2019on} & 0.1 & \ding{55} &0.8\\

\hline
D-SGD-1 \cite{jiang2017collaborative,lian2017can}& 0.1 &\ding{55} &\ding{55}\\

\hline
D-SGD-2 \cite{george2019distributed} &\ding{55} & $0.1/(10^{-5}k + 1)$&$0.2/(10^{-5}k + 1)^{0.3}$\\

\hline
$D^{2}$ \cite{Tang2018Decentralized}& 0.01 &\ding{55} &\ding{55}\\

\hline
D-SGT-1 \cite{lu2019gnsd,xin2019distributed} & 0.01& \ding{55}&\ding{55}\\

\hline
D-SGT-2 \cite{zhang2019decentralized,pu2018distributed} & 0.01 & \ding{55}&\ding{55}\\

\hline
C-SGD & 0.1 &\ding{55} &\ding{55}\\
\hline
\end{tabular}
\end{normalsize}
\end{center}
\vskip -0.1in
\end{table*}

\begin{figure}[htbp]
\centering
  \includegraphics[width=0.85\textwidth]{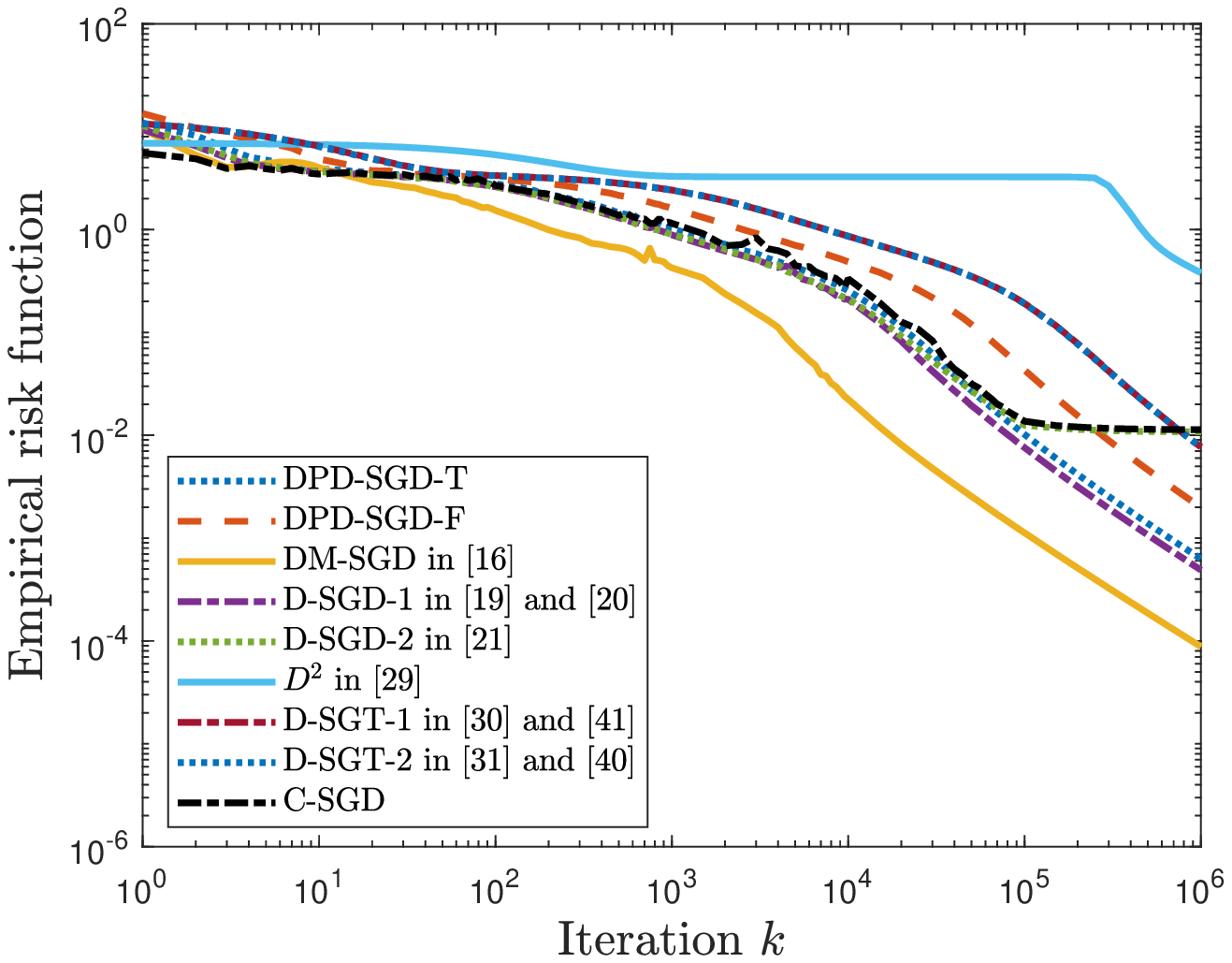}
  \caption{Empirical Risk.}
  \label{sgd:fig:nn_risk}
\end{figure}

Fig.~\ref{sgd:fig:nn_risk} shows that the proposed distributed primal--dual SGD algorithm with time-varying parameters converges almost as fast as the distributed SGD algorithm in \cite{jiang2017collaborative,lian2017can} and faster than the distributed SGD algorithms in \cite{george2019distributed,Tang2018Decentralized,lu2019gnsd,zhang2019decentralized,
pu2018distributed,xin2019distributed} and the centralized SGD algorithm. Note that our algorithm converges slower than the distributed momentum SGD algorithm \cite{Yu2019on}. This is reasonable since that algorithm is an accelerated algorithm with extra requirement on the cost functions, i.e., the deviations between the gradients of local cost functions is bounded, and it requires each agent to communicate three $p$-dimensional variables with its neighbors at each iteration. The slope of the curves are however almost the same. The accuracy of each algorithm is given in Table~\ref{tab:nn-acc}.

\begin{table*}[htbp]
\caption{{Accuracy on each algorithm in NN experiment.} }
\label{tab:nn-acc}
\vskip 0.15in
\begin{center}
\begin{normalsize}
\begin{tabular}{M{4.1cm}|M{3.2cm}}
\hline
Algorithm & Accuracy  \\

\hline
DPD-SGD-T & $93.04\%$\\

\hline
DPD-SGD-F & $92.76\%$\\

\hline
DM-SGD \cite{Yu2019on} & $93.44\%$\\

\hline
D-SGD-1 \cite{jiang2017collaborative,lian2017can}& $92.96\%$\\

\hline
D-SGD-2 \cite{george2019distributed} & $92.88\%$\\

\hline
$D^{2}$ \cite{Tang2018Decentralized}& $90.44\%$\\

\hline
D-SGT-1 \cite{lu2019gnsd,xin2019distributed} & $92.88\%$\\

\hline
D-SGT-2 \cite{zhang2019decentralized,pu2018distributed} & $92.96\%$\\

\hline
C-SGD & $93\%$\\
\hline
\end{tabular}
\end{normalsize}
\end{center}
\vskip -0.1in
\end{table*}

\subsection{Convolutional Neural Networks}
Let us consider the training of a convolutional neural networks (CNN) model.
We build a CNN model for each agent  with five 3$\times$3 convolutional layers using ReLU as activation function, one average pooling layer with filters of size 2$\times$2, one sigmoid layer with dimension 360, another sigmoid layer with dimension 60, one softmax layer with dimension 10. In this experiment, we use the whole MNIST data set. We use the same  communication graph as in above NN  experiment. Each agent is assigned 6000 data points randomly. We set the batch size as 20, which means at each iteration, 20 data points are chosen by the agent to update the gradient, which is also following a uniform distribution. For each algorithm, we do 10 epochs to train the CNN model.

We compare our algorithms DPD-SGD-T and DPD-SGD-F  with the fastest one above: DM-SGD, D-SGD-1, and C-SGD. We list all the parameters we choose in the CNN experiment for each algorithm in Table~\ref{tab:cnn-par}.

\begin{table*}[ht!]
\caption{{Parameters in each algorithm in CNN experiment.} }
\label{tab:cnn-par}
\vskip 0.15in
\begin{center}
\begin{normalsize}
\begin{tabular}{M{3.5cm}|M{2.5cm}|M{2.5cm}|M{2.5cm}}
\hline
Algorithm & $\eta_k$ & $\alpha_k$ & $\beta_k$ \\

\hline
DPD-SGD-T & $0.5/{k^{10^{-5}}}$ & $0.5k^{10^{-5}}$ & $0.1k^{10^{-5}}$\\

\hline
DPD-SGD-F &0.5 & 0.5 & 0.1\\

\hline
DM-SGD \cite{Yu2019on} & 0.1 & \ding{55} &0.8\\

\hline
D-SGD \cite{jiang2017collaborative,lian2017can}& 0.1 &\ding{55} &\ding{55}\\

\hline
C-SGD & 0.1 &\ding{55} &\ding{55}\\
\hline
\end{tabular}
\end{normalsize}
\end{center}
\vskip -0.1in
\end{table*}

We demonstrate the training loss and the test accuracy of each algorithm in Fig.~\ref{sgd:fig:cnn_loss} and Fig.~\ref{sgd:fig:cnn_acc}. Here we use Categorical Cross-Entropy loss, which is a softmax activation plus a Cross-Entropy loss. We can see that our algorithms perform almost the same as the DM-SGD and better than the D-SGD-1 and  the centralized C-SGD. The accuracy of each algorithm is given in Table~\ref{tab:cnn-acc}.

\begin{figure}[!ht]
\centering
        \includegraphics[width=0.85\textwidth]{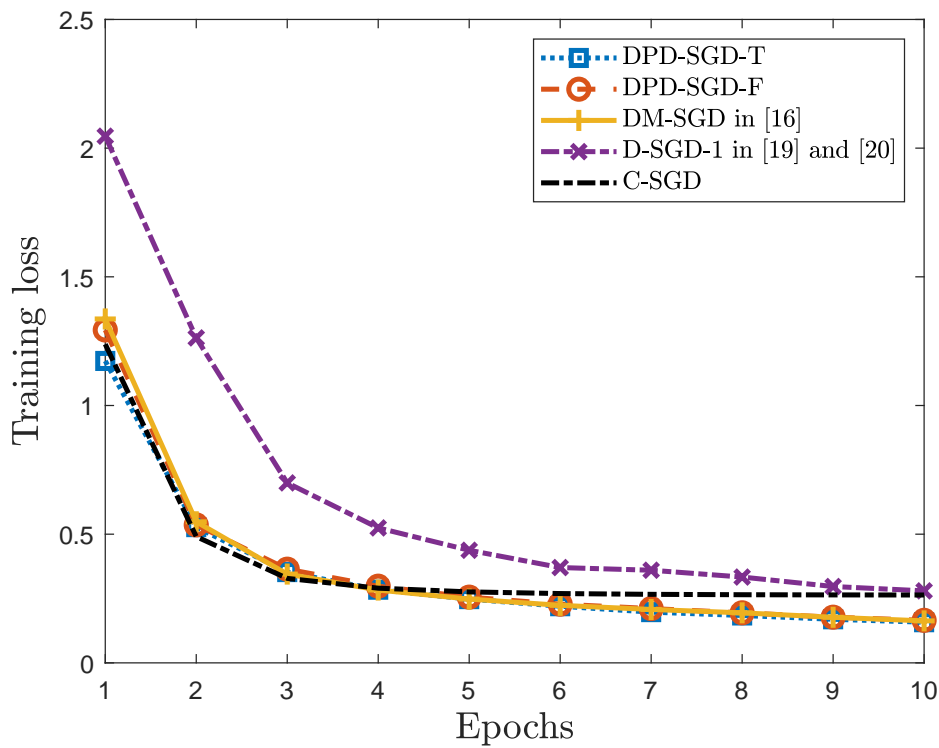}
        \caption{CNN training loss.}
        \label{sgd:fig:cnn_loss}
\end{figure}

\begin{figure}[!ht]
\centering
        \includegraphics[width=0.85\textwidth]{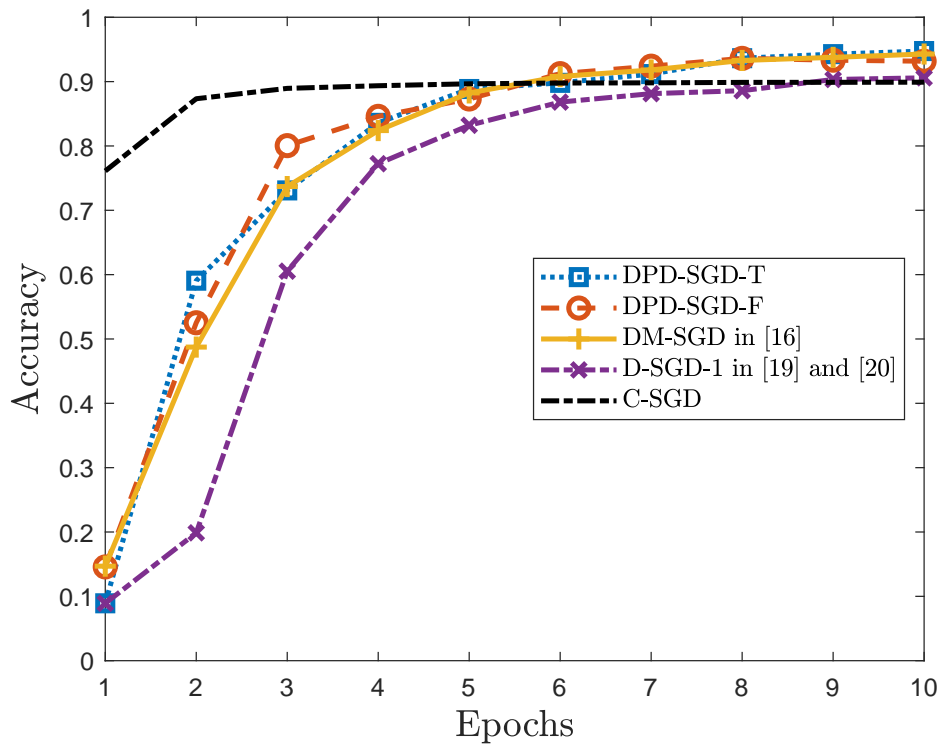}
        \caption{CNN accuracy.}
        \label{sgd:fig:cnn_acc}
\end{figure}

\begin{table*}[ht!]
\caption{{Accuracy on each algorithm in CNN experiment.} }
\label{tab:cnn-acc}
\vskip 0.15in
\begin{center}
\begin{normalsize}
\begin{tabular}{M{3.5cm}|M{2.5cm}}
\hline
Algorithm & Accuracy  \\

\hline
DPD-SGD-T & $94.75\%$\\

\hline
DPD-SGD-F & $93.17\%$\\

\hline
DM-SGD \cite{Yu2019on} & $94.29\%$\\

\hline
D-SGD \cite{jiang2017collaborative,lian2017can}& $92.96\%$\\

\hline
C-SGD & $89.91\%$\\
\hline
\end{tabular}
\end{normalsize}
\end{center}
\vskip -0.1in
\end{table*}

\section{Conclusions}\label{sgd:sec-conclusion}
In this paper, we studied distributed nonconvex optimization. We proposed a distributed primal--dual SGD algorithm and derived its convergence rate. More specifically, the linear speedup convergence rate $\mathcal{O}(1/\sqrt{nT})$ was established for smooth nonconvex cost functions under  arbitrarily connected communication networks.  The convergence rate was improved to the linear speedup convergence rate $\mathcal{O}(1/(nT))$ when the global cost function additionally satisfies the P--{\L} condition. It was also shown that the output of the proposed algorithm with constant parameters linearly converges to a neighborhood of a global optimum. Interesting directions for future work include achieving linear speedup under the P--{\L} condition while considering communication reduction. 

\bibliographystyle{IEEEtran}
\bibliography{refextra}

\begin{thebibliography}{10}
\providecommand{\url}[1]{#1}
\csname url@samestyle\endcsname
\providecommand{\newblock}{\relax}
\providecommand{\bibinfo}[2]{#2}
\providecommand{\BIBentrySTDinterwordspacing}{\spaceskip=0pt\relax}
\providecommand{\BIBentryALTinterwordstretchfactor}{4}
\providecommand{\BIBentryALTinterwordspacing}{\spaceskip=\fontdimen2\font plus
\BIBentryALTinterwordstretchfactor\fontdimen3\font minus
  \fontdimen4\font\relax}
\providecommand{\BIBforeignlanguage}[2]{{%
\expandafter\ifx\csname l@#1\endcsname\relax
\typeout{** WARNING: IEEEtran.bst: No hyphenation pattern has been}%
\typeout{** loaded for the language `#1'. Using the pattern for}%
\typeout{** the default language instead.}%
\else
\language=\csname l@#1\endcsname
\fi
#2}}
\providecommand{\BIBdecl}{\relax}
\BIBdecl

\bibitem{dean2012large}
J.~Dean, G.~Corrado, R.~Monga, K.~Chen, M.~Devin, M.~Mao, M.~Ranzato,
  A.~Senior, P.~Tucker, K.~Yang, Q.~V. Le, and A.~Y. Ng, ``Large scale
  distributed deep networks,'' in \emph{Advances in Neural Information
  Processing Systems}, 2012, pp. 1223--1231.

\bibitem{McMahan2017communication}
H.~B. McMahan, E.~Moore, D.~Ramage, S.~Hampson, and B.~Ag{\"u}era~y Arcas,
  ``{Communication-Efficient Learning of Deep Networks from Decentralized
  Data},'' in \emph{International Conference on Artificial Intelligence and
  Statistics}, 2017, pp. 1273--1282.

\bibitem{langford2009sparse}
J.~Langford, L.~Li, and T.~Zhang, ``Sparse online learning via truncated
  gradient,'' \emph{Journal of Machine Learning Research}, vol.~10, pp.
  777--801, 2009.

\bibitem{recht2011hogwild}
B.~Recht, C.~Re, S.~Wright, and F.~Niu, ``Hogwild: A lock-free approach to
  parallelizing stochastic gradient descent,'' in \emph{Advances in Neural
  Information Processing Systems}, 2011, pp. 693--701.

\bibitem{de2015taming}
C.~M. De~Sa, C.~Zhang, K.~Olukotun, and C.~R{\'e}, ``Taming the wild: A unified
  analysis of hogwild-style algorithms,'' in \emph{Advances in Neural
  Information Processing Systems}, 2015, pp. 2674--2682.

\bibitem{lian2015asynchronous}
X.~Lian, Y.~Huang, Y.~Li, and J.~Liu, ``Asynchronous parallel stochastic
  gradient for nonconvex optimization,'' in \emph{Advances in Neural
  Information Processing Systems}, 2015, pp. 2737--2745.

\bibitem{lian2016Comprehensive}
X.~Lian, H.~Zhang, C.-J. Hsieh, Y.~Huang, and J.~Liu, ``A comprehensive linear
  speedup analysis for asynchronous stochastic parallel optimization from
  zeroth-order to first-order,'' in \emph{Advances in Neural Information
  Processing Systems}, 2016, pp. 3054--3062.

\bibitem{Zhou2018distributedas}
Z.~Zhou, P.~Mertikopoulos, N.~Bambos, P.~Glynn, Y.~Ye, L.-J. Li, and F.-F. Li,
  ``Distributed asynchronous optimization with unbounded delays: How slow can
  you go?'' in \emph{International Conference on Machine Learning}, 2018, pp.
  5970--5979.

\bibitem{pmlr-v80-bernstein18a}
J.~Bernstein, Y.-X. Wang, K.~Azizzadenesheli, and A.~Anandkumar, ``sign{SGD}:
  Compressed optimisation for non-convex problems,'' in \emph{International
  Conference on Machine Learning}, 2018, pp. 560--569.

\bibitem{jiang2018linear}
P.~Jiang and G.~Agrawal, ``A linear speedup analysis of distributed deep
  learning with sparse and quantized communication,'' in \emph{Advances in
  Neural Information Processing Systems}, 2018, pp. 2525--2536.

\bibitem{reisizadeh2019fedpaq}
A.~Reisizadeh, A.~Mokhtari, H.~Hassani, A.~Jadbabaie, and R.~Pedarsani,
  ``Fed{PAQ}: A communication-efficient federated learning method with periodic
  averaging and quantization,'' \emph{arXiv preprint arXiv:1909.13014}, 2019.

\bibitem{basu2019qsparse}
D.~Basu, D.~Data, C.~Karakus, and S.~Diggavi, ``Qsparse-local-{SGD}:
  Distributed {SGD} with quantization, sparsification and local computations,''
  in \emph{Advances in Neural Information Processing Systems}, 2019, pp.
  14\,668--14\,679.

\bibitem{wang2018adaptive}
J.~Wang and G.~Joshi, ``Adaptive communication strategies to achieve the best
  error-runtime trade-off in local-update {SGD},'' in \emph{Conference on
  Machine Learning and Systems}, 2019.

\bibitem{yu2019parallel}
H.~Yu, S.~Yang, and S.~Zhu, ``Parallel restarted {SGD} with faster convergence
  and less communication: Demystifying why model averaging works for deep
  learning,'' in \emph{AAAI Conference on Artificial Intelligence}, vol.~33,
  2019, pp. 5693--5700.

\bibitem{haddadpour2019trading}
F.~Haddadpour, M.~M. Kamani, M.~Mahdavi, and V.~Cadambe, ``Trading redundancy
  for communication: Speeding up distributed {SGD} for non-convex
  optimization,'' in \emph{International Conference on Machine Learning}, 2019,
  pp. 2545--2554.

\bibitem{Yu2019on}
H.~Yu, R.~Jin, and S.~Yang, ``On the linear speedup analysis of communication
  efficient momentum {SGD} for distributed non-convex optimization,'' in
  \emph{International Conference on Machine Learning}, 2019, pp. 7184--7193.

\bibitem{haddadpour2019local}
F.~Haddadpour, M.~M. Kamani, M.~Mahdavi, and V.~Cadambe, ``Local {SGD} with
  periodic averaging: Tighter analysis and adaptive synchronization,'' in
  \emph{Advances in Neural Information Processing Systems}, 2019, pp.
  11\,080--11\,092.

\bibitem{Yu2019Computation}
H.~Yu and R.~Jin, ``On the computation and communication complexity of parallel
  {SGD} with dynamic batch sizes for stochastic non-convex optimization,'' in
  \emph{International Conference on Machine Learning}, 2019, pp. 7174--7183.

\bibitem{jiang2017collaborative}
Z.~Jiang, A.~Balu, C.~Hegde, and S.~Sarkar, ``Collaborative deep learning in
  fixed topology networks,'' in \emph{Advances in Neural Information Processing
  Systems}, 2017, pp. 5904--5914.

\bibitem{lian2017can}
X.~Lian, C.~Zhang, H.~Zhang, C.-J. Hsieh, W.~Zhang, and J.~Liu, ``Can
  decentralized algorithms outperform centralized algorithms? {A} case study
  for decentralized parallel stochastic gradient descent,'' in \emph{Advances
  in Neural Information Processing Systems}, 2017, pp. 5330--5340.

\bibitem{george2019distributed}
J.~George, T.~Yang, H.~Bai, and P.~Gurram, ``Distributed stochastic gradient
  method for non-convex problems with applications in supervised learning,'' in
  \emph{IEEE Conference on Decision and Control}, 2019, pp. 5538--5543.

\bibitem{pmlr-v80-lian18a}
X.~Lian, W.~Zhang, C.~Zhang, and J.~Liu, ``Asynchronous decentralized parallel
  stochastic gradient descent,'' in \emph{International Conference on Machine
  Learning}, 2018, pp. 3043--3052.

\bibitem{Assran2019Stochastic}
M.~Assran, N.~Loizou, N.~Ballas, and M.~Rabbat, ``Stochastic gradient push for
  distributed deep learning,'' in \emph{International Conference on Machine
  Learning}, 2019, pp. 344--353.

\bibitem{tang2018communication}
H.~Tang, S.~Gan, C.~Zhang, T.~Zhang, and J.~Liu, ``Communication compression
  for decentralized training,'' in \emph{Advances in Neural Information
  Processing Systems}, 2018, pp. 7652--7662.

\bibitem{reisizadeh2019robust}
A.~Reisizadeh, H.~Taheri, A.~Mokhtari, H.~Hassani, and R.~Pedarsani, ``Robust
  and communication-efficient collaborative learning,'' in \emph{Advances in
  Neural Information Processing Systems}, 2019, pp. 8386--8397.

\bibitem{taheri2020quantized}
H.~Taheri, A.~Mokhtari, H.~Hassani, and R.~Pedarsani, ``Quantized push-sum for
  gossip and decentralized optimization over directed graphs,'' \emph{arXiv
  preprint arXiv:2002.09964}, 2020.

\bibitem{Singh2020Communication}
N.~Singh, D.~Data, J.~George, and S.~Diggavi, ``S{Q}u{ARM-SGD}:
  Communication-efficient momentum {SGD} for decentralized optimization,''
  \emph{arXiv preprint arXiv:2005.07041}, 2020.

\bibitem{wang2018cooperative}
J.~Wang and G.~Joshi, ``Cooperative {SGD}: A unified framework for the design
  and analysis of communication-efficient {SGD} algorithms,'' \emph{arXiv
  preprint arXiv:1808.07576}, 2018.

\bibitem{Tang2018Decentralized}
H.~Tang, X.~Lian, M.~Yan, C.~Zhang, and J.~Liu, ``{$D^2$}: Decentralized
  training over decentralized data,'' in \emph{International Conference on
  Machine Learning}, 2018, pp. 4848--4856.

\bibitem{lu2019gnsd}
S.~Lu, X.~Zhang, H.~Sun, and M.~Hong, ``{GNSD}: A gradient-tracking based
  nonconvex stochastic algorithm for decentralized optimization,'' in
  \emph{IEEE Data Science Workshop}, 2019, pp. 315--321.

\bibitem{zhang2019decentralized}
J.~Zhang and K.~You, ``Decentralized stochastic gradient tracking for empirical
  risk minimization,'' \emph{arXiv preprint arXiv:1909.02712}, 2019.

\bibitem{stich2018local}
S.~U. Stich, ``Local {SGD} converges fast and communicates little,'' in
  \emph{International Conference on Learning Representations}, 2019.

\bibitem{Koloskova2019decentralized}
A.~Koloskova, S.~Stich, and M.~Jaggi, ``Decentralized stochastic optimization
  and gossip algorithms with compressed communication,'' in \emph{International
  Conference on Machine Learning}, 2019, pp. 3478--3487.

\bibitem{olshevsky2019non}
A.~Olshevsky, I.~C. Paschalidis, and S.~Pu, ``A non-asymptotic analysis of
  network independence for distributed stochastic gradient descent,''
  \emph{arXiv preprint arXiv:1906.02702}, 2019.

\bibitem{rabbat2015multi}
M.~Rabbat, ``Multi-agent mirror descent for decentralized stochastic
  optimization,'' in \emph{International Workshop on Computational Advances in
  Multi-Sensor Adaptive Processing}, 2015, pp. 517--520.

\bibitem{lan2018communication}
G.~Lan, S.~Lee, and Y.~Zhou, ``Communication-efficient algorithms for
  decentralized and stochastic optimization,'' \emph{Mathematical Programming},
  pp. 1--48, 2018.

\bibitem{yuan2018optimal}
D.~Yuan, Y.~Hong, D.~W. Ho, and G.~Jiang, ``Optimal distributed stochastic
  mirror descent for strongly convex optimization,'' \emph{Automatica},
  vol.~90, pp. 196--203, 2018.

\bibitem{jakovetic2018convergence}
D.~Jakovetic, D.~Bajovic, A.~K. Sahu, and S.~Kar, ``Convergence rates for
  distributed stochastic optimization over random networks,'' in \emph{IEEE
  Conference on Decision and Control}, 2018, pp. 4238--4245.

\bibitem{fallah2019robust}
A.~Fallah, M.~Gurbuzbalaban, A.~Ozdaglar, U.~Simsekli, and L.~Zhu, ``Robust
  distributed accelerated stochastic gradient methods for multi-agent
  networks,'' \emph{arXiv preprint arXiv:1910.08701}, 2019.

\bibitem{pu2018swarming}
S.~Pu and A.~Garcia, ``Swarming for faster convergence in stochastic
  optimization,'' \emph{SIAM Journal on Control and Optimization}, vol.~56,
  no.~4, pp. 2997--3020, 2018.

\bibitem{pu2018distributed}
S.~Pu and A.~Nedi{\'c}, ``A distributed stochastic gradient tracking method,''
  in \emph{IEEE Conference on Decision and Control}, 2018, pp. 963--968.

\bibitem{xin2019distributed}
R.~Xin, A.~K. Sahu, U.~A. Khan, and S.~Kar, ``Distributed stochastic
  optimization with gradient tracking over strongly-connected networks,'' in
  \emph{IEEE Conference on Decision and Control}, 2019, pp. 8353--8358.

\bibitem{mesbahi2010graph}
M.~Mesbahi and M.~Egerstedt, \emph{Graph Theoretic Methods in Multiagent
  Networks}.\hskip 1em plus 0.5em minus 0.4em\relax Princeton University Press,
  2010.

\bibitem{rakhlin2011making}
A.~Rakhlin, O.~Shamir, and K.~Sridharan, ``Making gradient descent optimal for
  strongly convex stochastic optimization,'' in \emph{International Conference
  on Machine Learning}, 2012, pp. 1571--1578.

\bibitem{karimi2016linear}
H.~Karimi, J.~Nutini, and M.~Schmidt, ``Linear convergence of gradient and
  proximal-gradient methods under the {P}olyak-{{\L}}ojasiewicz condition,'' in
  \emph{Joint European Conference on Machine Learning and Knowledge Discovery
  in Databases}, 2016, pp. 795--811.

\bibitem{zhang2015restricted}
H.~Zhang and L.~Cheng, ``Restricted strong convexity and its applications to
  convergence analysis of gradient-type methods in convex optimization,''
  \emph{Optimization Letters}, vol.~9, no.~5, pp. 961--979, 2015.

\bibitem{shi2015extra}
W.~Shi, Q.~Ling, G.~Wu, and W.~Yin, ``{EXTRA}: An exact first-order algorithm
  for decentralized consensus optimization,'' \emph{SIAM Journal on
  Optimization}, vol.~25, no.~2, pp. 944--966, 2015.

\bibitem{nedic2017achieving}
A.~Nedi{\'c}, A.~Olshevsky, and W.~Shi, ``Achieving geometric convergence for
  distributed optimization over time-varying graphs,'' \emph{SIAM Journal on
  Optimization}, vol.~27, no.~4, pp. 2597--2633, 2017.

\bibitem{li2020revisiting}
H.~Li and Z.~Lin, ``Revisiting extra for smooth distributed optimization,''
  \emph{SIAM Journal on Optimization}, vol.~30, no.~3, pp. 1795--1821, 2020.

\bibitem{lecun2010mnist}
Y.~LeCun, C.~Cortes, and C.~Burges, ``M{NIST} handwritten digit database,''
  \emph{Available: http://yann. lecun. com/exdb/mnist}, 2010.

\bibitem{bottou2012stochastic}
L.~Bottou, ``Stochastic gradient descent tricks,'' in \emph{Neural networks:
  Tricks of the trade}.\hskip 1em plus 0.5em minus 0.4em\relax Springer, 2012,
  pp. 421--436.

\bibitem{nesterov2018lectures}
Y.~Nesterov, \emph{Lectures on Convex Optimization}, 2nd~ed.\hskip 1em plus
  0.5em minus 0.4em\relax Springer International Publishing, 2018.

\bibitem{tang2020distributedzero}
Y.~Tang, J.~Zhang, and N.~Li, ``Distributed zero-order algorithms for nonconvex
  multi-agent optimization,'' \emph{arXiv preprint arXiv:1908.11444v3}, 2020.

\bibitem{Yi2018distributed}
X.~Yi, L.~Yao, T.~Yang, J.~George, and K.~H. Johansson, ``Distributed
  optimization for second-order multi-agent systems with dynamic
  event-triggered communication,'' in \emph{IEEE Conference on Decision and
  Control}, 2018, pp. 3397--3402.

\end{thebibliography}









\appendix

\subsection{Notations and Useful Lemmas}\label{sgd:app-lemmas}
${\bm I}_n$ is the $n$-dimensional identity matrix. The notation $A\otimes B$ denotes the Kronecker product
of matrices $A$ and $B$. $\nullrank(A)$ is the null space of matrix $A$.
Given two symmetric matrices $M,N$, $M\ge N$ means that $M-N$ is positive semi-definite. $\rho(\cdot)$ stands for the spectral radius for matrices and $\rho_2(\cdot)$ indicates the minimum
positive eigenvalue for matrices having positive eigenvalues. For any square matrix $A$, $\|x\|_A^2$ denotes $x^\top Ax$. $\lceil \cdot\rceil$ and $\lfloor\cdot\rfloor$ denote the ceiling and floor functions, respectively. For any $x\in\mathbb{R}$, $[x]_+$ is the positive part of $x$. ${\bm 1}_{(\cdot)}$ is the indicator function. For any $n\in\mathbb{N}_0$, $n!$ is the factorial of $n$.

Denote $K_n={\bm I}_n-\frac{1}{n}{\bm 1}_n{\bm 1}^{\top}_n$, $\bsK=K_n\otimes {\bm I}_p$, $\bsH=\frac{1}{n}({\bm 1}_n{\bm 1}_n^\top\otimes{\bm I}_p)$, $\bar{x}_k=\frac{1}{n}({\bm 1}_n^\top\otimes{\bm I}_p)\bsx_k$, $\bar{\bsx}_k={\bm 1}_n\otimes\bar{x}_k$, $\bsg_k=\nabla\tilde{f}(\bsx_k)$, $\bar{\bsg}_k=\bsH\bsg_{k}$, $\bsg^0_k=\nabla\tilde{f}(\bar{\bsx}_k)$, $\bar{\bsg}_k^0=\bsH\bsg^0_{k}={\bm 1}_n\otimes\nabla f(\bar{x}_k)$, and $\bar{\bsg}_k^u=\bsH\bsg_k^u$.

The following results are used in the proofs.

\begin{lemma} (Lemma~1.2.3 in \cite{nesterov2018lectures} and Lemma~3 in \cite{tang2020distributedzero})
If the function $f(x):~\mathbb{R}^p\mapsto\mathbb{R}$ is smooth with constant $L_f>0$, then
\begin{subequations}
\begin{align}
&|f(y)-f(x)-(y-x)^\top\nabla f(x)|
\le\frac{L_f}{2}\|y-x\|^2, \label{nonconvex:lemma:lipschitz}\\
&\|\nabla f(x)\|^2\le2L_f(f(x)-f^*),~\forall x,y\in\mathbb{R}^{p},\label{nonconvex:lemma:lipschitz2}
\end{align}
\end{subequations}
where $f^*=\min_{x\in\mathbb{R}^p}f(x)$.
\end{lemma}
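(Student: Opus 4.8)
The plan is to establish the two inequalities separately, both starting from the Lipschitz-gradient characterization of smoothness given in Assumption~\ref{sgd:assfiu}, namely $\|\nabla f(x)-\nabla f(y)\|\le L_f\|x-y\|$. For the quadratic bound \eqref{nonconvex:lemma:lipschitz}, I would first write the increment of $f$ along the straight segment joining $x$ to $y$ via the fundamental theorem of calculus,
\[
f(y)-f(x)=\int_0^1 \nabla f\big(x+t(y-x)\big)^\top(y-x)\,dt.
\]
Subtracting the linear term $(y-x)^\top\nabla f(x)=\int_0^1(y-x)^\top\nabla f(x)\,dt$ from both sides gives
\[
f(y)-f(x)-(y-x)^\top\nabla f(x)=\int_0^1\big[\nabla f\big(x+t(y-x)\big)-\nabla f(x)\big]^\top(y-x)\,dt.
\]
Then I would take absolute values, pass them inside the integral, and bound the integrand by the Cauchy--Schwarz inequality followed by the Lipschitz property, yielding $\|\nabla f(x+t(y-x))-\nabla f(x)\|\le L_f t\|y-x\|$. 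Integrating $\int_0^1 L_f t\,dt=L_f/2$ then delivers exactly \eqref{nonconvex:lemma:lipschitz}.

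For the gradient-norm bound \eqref{nonconvex:lemma:lipschitz2}, I would use only the upper-bound half of \eqref{nonconvex:lemma:lipschitz} just established, i.e. $f(y)\le f(x)+(y-x)^\top\nabla f(x)+\tfrac{L_f}{2}\|y-x\|^2$, and specialize it to the single point $y=x-\tfrac{1}{L_f}\nabla f(x)$. Substituting this choice collapses the right-hand side to
\[
f\Big(x-\tfrac{1}{L_f}\nabla f(x)\Big)\le f(x)-\tfrac{1}{L_f}\|\nabla f(x)\|^2+\tfrac{L_f}{2}\cdot\tfrac{1}{L_f^2}\|\nabla f(x)\|^2=f(x)-\tfrac{1}{2L_f}\|\nabla f(x)\|^2.
\]
Since $f^*=\min_{x}f(x)$ is a lower bound for every value of $f$, in particular $f^*\le f(x-\tfrac{1}{L_f}\nabla f(x))$, rearranging immediately gives $\|\nabla f(x)\|^2\le 2L_f(f(x)-f^*)$.

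This lemma is a classical descent inequality (it is stated as a citation to Nesterov), so no genuine obstacle arises; the content is entirely in correctly handling the two routine estimates. The only point requiring a little care is the step where I exchange the absolute value with the integral in the first part, which is justified because $\nabla f$ is continuous (indeed Lipschitz) so the integrand is integrable; and in the second part I should note that $f^*$ being a finite lower bound suffices, so the argument does not require the minimizer to be attained. I would present the derivation in this order, \eqref{nonconvex:lemma:lipschitz} first since \eqref{nonconvex:lemma:lipschitz2} reuses it, and keep the intermediate displays short.
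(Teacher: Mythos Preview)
Your proof is correct and follows the standard textbook argument. The paper itself does not supply a proof of this lemma; it simply states the result with citations to Nesterov's book and \cite{tang2020distributedzero}, so there is no alternative approach to compare against.
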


\begin{lemma}\label{nonconvex:lemma-Xinlei} (Lemmas~1 and 2 in \cite{Yi2018distributed})
Let $L$ be the Laplacian matrix of the graph $\mathcal{G}$.
If Assumption~\ref{sgd:assgraph} holds, then $L$ is positive semi-definite, $\nullrank(L)=\nullrank(K_n)=\{{\bm 1}_n\}$, $L\le\rho(L){\bm I}_n$, $\rho(K_n)=1$,
\begin{align}
&K_nL=LK_n=L,\label{nonconvex:KL-L-eq}\\
&0\le\rho_2(L)K_n\le L\le\rho(L)K_n.\label{nonconvex:KL-L-eq2}
\end{align}
Moreover, there exists an orthogonal matrix $[r \ R]\in \mathbb{R}^{n \times n}$ with $r=\frac{1}{\sqrt{n}}\mathbf{1}_n$ and $R \in \mathbb{R}^{n\times (n-1)}$ such that
\begin{align}
&R\Lambda_1^{-1}R^{\top}L=LR\Lambda_1^{-1}R^{\top}=K_n,\label{nonconvex:lemma-eq}\\
&\frac{1}{\rho(L)}K_n\leq R\Lambda_1^{-1}R^{\top}\le\frac{1}{\rho_2(L)}K_n,\label{nonconvex:lemma-eq2}
\end{align}
where $\Lambda_1=\diag([\lambda_2,\dots,\lambda_n])$ with $0<\lambda_2\leq\dots\leq\lambda_n$ being the eigenvalues of the Laplacian matrix $L$.
\end{lemma}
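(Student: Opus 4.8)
The plan is to reduce every assertion to the orthogonal spectral decomposition of the symmetric Laplacian $L$ together with the single structural fact that the centering matrix $K_n={\bm I}_n-\frac{1}{n}{\bm 1}_n{\bm 1}_n^\top$ is the orthogonal projector onto the hyperplane $\{{\bm 1}_n\}^\perp$.

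First I would record the elementary properties. Writing $L=\sum_{(i,j)\in\mathcal{E}}w_{ij}(e_i-e_j)(e_i-e_j)^\top$ shows $x^\top Lx=\sum_{(i,j)\in\mathcal{E}}w_{ij}(x_i-x_j)^2\ge0$, so $L$ is symmetric positive semi-definite; under Assumption~\ref{sgd:assgraph} (connectivity) the quadratic form vanishes exactly when $x$ is constant across the graph, giving $\nullrank(L)=\operatorname{span}\{{\bm 1}_n\}$. Since $K_n$ is idempotent and symmetric with $K_n{\bm 1}_n=\bm 0$ and $K_n$ equal to the identity on $\{{\bm 1}_n\}^\perp$, its eigenvalues are $0$ and $1$, whence $\rho(K_n)=1$ and $\nullrank(K_n)=\operatorname{span}\{{\bm 1}_n\}$; and $L\le\rho(L){\bm I}_n$ is immediate because $\rho(L)$ is the largest eigenvalue of the symmetric matrix $L$. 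The identities $K_nL=LK_n=L$ in~\eqref{nonconvex:KL-L-eq} I would verify directly from ${\bm 1}_n^\top L=\bm 0^\top$ and $L{\bm 1}_n=\bm 0$, which cancel the rank-one correction in $K_n$.

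The core step is to fix the orthogonal diagonalization. Because $L$ is symmetric with ${\bm 1}_n$ in its kernel, I can choose an orthonormal eigenbasis of the form $[r\ R]$ with $r=\frac{1}{\sqrt n}{\bm 1}_n$ (eigenvalue $0$) and $R\in\mathbb{R}^{n\times(n-1)}$ collecting the eigenvectors for $\lambda_2\le\cdots\le\lambda_n$, so that $L=R\Lambda_1R^\top$ with $\Lambda_1=\diag([\lambda_2,\dots,\lambda_n])$ and $R^\top R={\bm I}_{n-1}$. Orthogonality of $[r\ R]$ gives $rr^\top+RR^\top={\bm I}_n$, and since $rr^\top=\frac1n{\bm 1}_n{\bm 1}_n^\top$ this yields the key identification $RR^\top=K_n$. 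Identity~\eqref{nonconvex:lemma-eq} then follows by the cancellation $R\Lambda_1^{-1}R^\top\,R\Lambda_1R^\top=R\Lambda_1^{-1}\Lambda_1R^\top=RR^\top=K_n$ (and symmetrically on the other side).

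Finally, the two operator sandwiches~\eqref{nonconvex:KL-L-eq2} and~\eqref{nonconvex:lemma-eq2} reduce to scalar bounds on the diagonal of $\Lambda_1$. Identifying $\rho_2(L)=\lambda_2$ and $\rho(L)=\lambda_n$, the entrywise inequalities $\rho_2(L){\bm I}_{n-1}\le\Lambda_1\le\rho(L){\bm I}_{n-1}$ and $\frac{1}{\rho(L)}{\bm I}_{n-1}\le\Lambda_1^{-1}\le\frac{1}{\rho_2(L)}{\bm I}_{n-1}$ hold, and conjugating each by $R$ (which preserves the semidefinite order and turns ${\bm I}_{n-1}$ into $RR^\top=K_n$) produces~\eqref{nonconvex:KL-L-eq2} and~\eqref{nonconvex:lemma-eq2} respectively. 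This argument is essentially routine linear algebra; the only point demanding care is the simultaneous diagonalization, i.e.\ choosing the eigenbasis so that the distinguished eigenvector is exactly $r=\frac{1}{\sqrt n}{\bm 1}_n$, which is what makes $RR^\top$ coincide with $K_n$ rather than merely being similar to it. Connectivity enters only to guarantee $\lambda_2>0$ (so $\Lambda_1^{-1}$ exists) and that the kernel is one-dimensional.
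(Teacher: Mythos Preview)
Your proof is correct and is the standard spectral argument. The paper itself does not supply a proof of this lemma; it merely cites the result from an earlier reference, so there is no in-paper argument to compare against. Your approach---diagonalize $L$ in an orthonormal eigenbasis containing $r=\frac{1}{\sqrt{n}}{\bm 1}_n$, identify $RR^\top$ with $K_n$, and sandwich $\Lambda_1$ and $\Lambda_1^{-1}$ between scalar multiples of the identity---is exactly the expected derivation and fills the gap cleanly.
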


\begin{lemma}\label{sgu:lemma:exponential}
Let $a\in(0,1)$ be a constant, then
\begin{align}\label{sgu:lemma:exponential-equ}
(1-a)^T\le\frac{k!}{(aT)^k},~\forall k,T\in\mathbb{N}_0.
\end{align}
\end{lemma}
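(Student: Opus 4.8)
The plan is to derive the bound from two completely elementary ingredients: the pointwise inequality $1-a\le e^{-a}$ and the fact that each term of the power series for the exponential is nonnegative. First I would observe that for $a\in(0,1)$ the standard inequality $1-a\le e^{-a}$ holds (the difference $e^{-a}-(1-a)$ vanishes at $a=0$ and has nonnegative derivative $1-e^{-a}$ for $a\ge0$). Since both sides are nonnegative, raising to the $T$-th power preserves the inequality, giving
\begin{align}
(1-a)^T\le e^{-aT},\quad\forall T\in\mathbb{N}_0.\nonumber
\end{align}

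Next I would bound $e^{-aT}$ by the right-hand side of the claim. Setting $x=aT\ge0$ and using the expansion $e^{x}=\sum_{j=0}^{\infty}x^{j}/j!$, every summand is nonnegative, so in particular $e^{x}\ge x^{k}/k!$ for each fixed $k\in\mathbb{N}_0$. Rearranging yields $x^{k}e^{-x}\le k!$, that is, $e^{-x}\le k!/x^{k}$ whenever $x>0$. Chaining this with the previous step gives $(1-a)^T\le e^{-aT}\le k!/(aT)^k$, which is exactly \eqref{sgu:lemma:exponential-equ}.

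The only remaining care is in the degenerate cases hidden in the quantifier $\forall k,T\in\mathbb{N}_0$. When $k=0$ the right-hand side is $0!/(aT)^{0}=1$ and the left-hand side is at most $1$ because $1-a\in(0,1)$, so the bound is immediate; when $T=0$ and $k\ge1$ the right-hand side is $+\infty$ under the usual convention and the inequality holds trivially. I do not anticipate any genuine obstacle here, since the whole argument rests on the two textbook estimates above; the substantive content is simply the power-series lower bound $e^{x}\ge x^k/k!$, and the remainder is bookkeeping of these boundary conventions.
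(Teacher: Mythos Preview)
Your argument is correct and is essentially the same as the paper's proof: both use $1-a\le e^{-a}$ (equivalently $\ln(1-a)\le -a$) to obtain $(1-a)^T\le e^{-aT}$, and then the power-series lower bound $e^{x}\ge x^k/k!$ to finish. You additionally spell out the boundary cases $k=0$ and $T=0$, which the paper leaves implicit, but there is no substantive difference in method.
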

\begin{proof}
For any constant $a\in(0,1)$, we have $\ln(1-a)\le-a$. Thus,
\begin{align}\label{sgu:lemma:exponential-equ1}
(1-a)^T\le e^{-aT},~\forall T\in\mathbb{N}_0.
\end{align}

For any constant $x>0$, we have $e^x>\frac{x^k}{k!},~\forall k\in\mathbb{N}_0$. This result together with \eqref{sgu:lemma:exponential-equ1} yields  \eqref{sgu:lemma:exponential-equ}.
\end{proof}

\begin{lemma}\label{zerosg:serise:lemma:sequence}
Let $\{z_k\}$, $\{r_{1,k}\}$, and $\{r_{2,k}\}$ be sequences. Suppose there exists $t_1\in\mathbb{N}_+$ such that
\begin{subequations}
\begin{align}
&z_k\ge0,\label{zerosg:serise:lemma:sequence-equ0}\\
&z_{k+1}\le(1-r_{1,k})z_k+r_{2,k},\label{zerosg:serise:lemma:sequence-equ1}\\
&1> r_{1,k}\ge\frac{a_1}{(k+t_1)^\delta},\label{zerosg:serise:lemma:sequence-equ2}\\
&r_{2,k}\le\frac{a_2}{(k+t_1)^2},~\forall k\in\mathbb{N}_0, \label{zerosg:serise:lemma:sequence-equ3}
\end{align}
\end{subequations}
where $\delta\ge0$, $a_1>0$, and $a_2>0$ are constants.

(i) If $\delta=1$, then
\begin{align}\label{zerosg:serise:lemma:sequence-equ5}
z_{k}&\le \phi_1(k,t_1,a_1,a_2,z_{0}),~\forall k\in\mathbb{N}_+,
\end{align}
where
\begin{align}\label{zerosg:serise:lemma:sequence-equ5-phi3}
\phi_1(k,t_1,a_1,a_2,z_{0})&=\frac{t_1^{a_1}z_{0}}{(k+t_1)^{a_1}}
+\frac{a_2}{(k+t_1-1)^{2}}\nonumber\\
&\quad+4a_2s_1(k+t_1),
\end{align}
and
\begin{align*}
s_1(k)=
\begin{cases}
  \frac{1}{(a_1-1)k}, & \mbox{if } a_1>1, \\
  \frac{\ln(k-1)}{k}, & \mbox{if } a_1=1, \\
  \frac{-t_1^{a_1-1}}{(a_1-1)k^{a_1}}, & \mbox{if } a_1<1.
\end{cases}
\end{align*}

(ii) If $\delta=0$, then
\begin{align}\label{zerosg:serise:lemma:sequence-equ6}
z_{k}&\le \phi_2(k,t_1,a_1,a_2,z_{0}),~\forall k\in\mathbb{N}_+,
\end{align}
where
\begin{align}\label{zerosg:serise:lemma:sequence-equ6-phi4}
\phi_2(k,t_1,a_1,a_2,z_{0})
&=(1-a_1)^kz_{0}+a_2(1-a_1)^{k+t_1-1}\Big([t_2-t_1]_+s_2(t_1)\nonumber\\
&\quad+([t_3-t_1]_+-[t_2-t_1]_+)s_2(t_3)\Big)\nonumber\\
&\quad+\frac{{\bm 1}_{(k+t_1-1\ge t_3)}2a_2}{-\ln(1-a_1)(k+t_1)^{2}(1-a_1)},
\end{align}
$s_2(k)=\frac{1}{k^{2}(1-a_1)^{k}}$, $t_2=\lceil \frac{-2}{\ln(1-a_1)}\rceil$, and $t_3=\lceil \frac{-4}{\ln(1-a_1)}\rceil$.
\end{lemma}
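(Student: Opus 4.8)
\textbf{Proof plan for Lemma~\ref{zerosg:serise:lemma:sequence}.}
The plan is to unroll the recursion \eqref{zerosg:serise:lemma:sequence-equ1} and bound the resulting ``homogeneous'' and ``forced'' parts separately, treating the two cases $\delta=1$ and $\delta=0$ in parallel but with different elementary estimates. First I would iterate \eqref{zerosg:serise:lemma:sequence-equ1} from $0$ to $k-1$ to obtain
\[
z_k\le\Big(\prod_{j=0}^{k-1}(1-r_{1,j})\Big)z_0+\sum_{i=0}^{k-1}\Big(\prod_{j=i+1}^{k-1}(1-r_{1,j})\Big)r_{2,i},
\]
which is valid because $r_{1,j}\in(0,1)$ and $z_k\ge0$; then I would replace $r_{1,j}$ by its lower bound $a_1/(j+t_1)^\delta$ from \eqref{zerosg:serise:lemma:sequence-equ2} and $r_{2,i}$ by its upper bound $a_2/(i+t_1)^2$ from \eqref{zerosg:serise:lemma:sequence-equ3}, which only increases the right-hand side.

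For case (i), $\delta=1$, the product $\prod_{j=0}^{k-1}(1-a_1/(j+t_1))$ telescopes after bounding $1-a_1/(j+t_1)\le ((j+t_1-1)/(j+t_1))^{a_1}$ (using $1-a_1 x\le(1-x)^{a_1}$ for $x\in[0,1]$, $a_1\le1$, or a direct comparison when $a_1>1$), giving the leading term $t_1^{a_1}z_0/(k+t_1)^{a_1}$. The forced term becomes $\sum_{i=0}^{k-1}\big((i+t_1)/(k+t_1)\big)^{a_1}\cdot a_2/(i+t_1)^2$; I would split off the last index $i=k-1$ to produce the $a_2/(k+t_1-1)^2$ term and bound the remaining sum by an integral $\int \tau^{a_1-2}\,d\tau$, whose three regimes ($a_1>1$, $=1$, $<1$) yield exactly the three branches of $s_1$, with the factor $4$ absorbing the discretization constants. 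For case (ii), $\delta=0$, the homogeneous factor is simply $(1-a_1)^k$, and the forced term is $\sum_{i=0}^{k-1}(1-a_1)^{k-1-i}a_2/(i+t_1)^2$; here I would use Lemma~\ref{sgu:lemma:exponential} (with $k=2$) to control $(1-a_1)^{-i}$ versus $(i+t_1)^2$ for large $i$, splitting the sum at the thresholds $t_2,t_3$ where $s_2(k)=1/(k^2(1-a_1)^k)$ transitions from increasing to decreasing, and bounding the tail geometric-type sum by $2a_2/(-\ln(1-a_1)(k+t_1)^2(1-a_1))$ via comparison with a geometric series.

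The main obstacle will be the bookkeeping in case (ii): identifying the correct crossover indices $t_2=\lceil-2/\ln(1-a_1)\rceil$ and $t_3=\lceil-4/\ln(1-a_1)\rceil$ so that $s_2$ is monotone on each piece, and verifying that summing $(1-a_1)^{k-1-i}a_2/(i+t_1)^2$ over each piece gives the stated closed form without losing a constant. In case (i) the comparable subtlety is the sign of $a_1-1$ flipping the direction of the integral bound, which forces the three-way definition of $s_1$; care is needed so that the $<1$ branch comes out with the correct $-t_1^{a_1-1}/((a_1-1)k^{a_1})$ sign (the integral from $t_1$ rather than from $1$). Both are routine once the splitting points are fixed, so the proof is essentially a careful but elementary estimation of two telescoping/geometric sums.
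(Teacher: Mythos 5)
Your overall route is the paper's: unroll the recursion into a homogeneous product plus a forced sum, bound the product by a power of $(l+t_1)/(k+t_1)$ when $\delta=1$ (and by $(1-a_1)^k$ when $\delta=0$), peel off the last forced term to produce $a_2/(k+t_1-1)^2$, and control the remaining sum by an integral whose three regimes give $s_1$, respectively by splitting at the monotonicity thresholds $t_2,t_3$ of $s_2$ and estimating the tail. Two of your elementary steps are, however, stated in the wrong direction, and one of them would actually break a subcase.

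In case (i) you propose $1-a_1x\le(1-x)^{a_1}$ for $a_1\le1$. Bernoulli's inequality goes the other way in that range: for $a_1\in(0,1)$ and $x\in(0,1)$ one has $(1-x)^{a_1}\le 1-a_1x$ (e.g.\ $a_1=1/2$, $x=3/4$ gives $0.5<0.625$), so your telescoping bound on $\prod_j(1-a_1/(j+t_1))$ fails precisely in the subcase $a_1<1$ that produces the third branch of $s_1$; it is valid only for $a_1\ge1$, the opposite of what you wrote. The paper avoids the case split entirely by using $1-t\le e^{-t}$ together with $\sum_{\tau=l}^{k-1}\frac{a_1}{\tau+t_1}\ge a_1\ln\frac{k+t_1}{l+t_1}$, which gives $\prod_{\tau=l}^{k-1}(1-r_{1,\tau})\le\left(\frac{l+t_1}{k+t_1}\right)^{a_1}$ uniformly in $a_1$; you should substitute that device. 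In case (ii) you say $s_2$ transitions from increasing to decreasing at $t_2$; it is the reverse, since $s_2'(t)=-s_2(t)(2/t+\ln(1-a_1))$ is negative for $t<-2/\ln(1-a_1)$ and positive afterwards, and the direction matters because the piecewise bounds $\sum_{\tau=t_1}^{t_2-1}s_2(\tau)\le(t_2-t_1)s_2(t_1)$ and $\sum_{\tau=t_2}^{t_3-1}s_2(\tau)\le(t_3-t_2)s_2(t_3)$ each rely on knowing which endpoint dominates. Also note the paper bounds the tail $\sum_{\tau\ge t_3}s_2(\tau)$ by an integral comparison followed by integration by parts (using $t_3\ge 4/\ln(1/(1-a_1))$ to absorb half the integral), rather than Lemma~\ref{sgu:lemma:exponential} or a geometric-series comparison; that specific step is what produces the constant in the last term of $\phi_2$. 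With these directions corrected, your plan coincides with the paper's proof.
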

\begin{proof}
(i) From \eqref{zerosg:serise:lemma:sequence-equ0}--\eqref{zerosg:serise:lemma:sequence-equ2}, for any $k\in\mathbb{N}_+$, it holds that
\begin{align}\label{zerosg:serise:lemma:seqproof1}
z_{k}&\le \prod_{\tau=0}^{k-1}(1-r_{1,\tau})z_{0}+r_{2,k-1}
+\sum_{l=0}^{k-2}\prod_{\tau=l+1}^{k-1}(1-r_{1,\tau})r_{2,l}.
\end{align}


For any $t\in[0, 1]$, it holds that $1-t\le e^{-t}$ since $s_3(t)=1-t-e^{-t}$ is a non-increasing function in the interval $[0, 1]$. Thus, for any $k>l\ge 0$, it holds that
\begin{align}\label{zerosg:serise:lemma:seqproof2}
\prod_{\tau=l}^{k-1}(1-r_{1,\tau})
\le e^{-\sum_{\tau=l}^{k-1}r_{1,\tau}}.
\end{align}

We also have
\begin{align}\label{zerosg:serise:lemma:seqproof3}
&\sum_{\tau=l}^{k-1}r_{1,\tau}\ge\sum_{\tau=l}^{k-1}\frac{a_1}{\tau+t_1}
=\sum_{\tau=l+t_1}^{k-1+t_1}\frac{a_1}{\tau}
\ge\int_{t=l+t_1}^{k+t_1}\frac{a_1}{t}dt=a_1(\ln(k+t_1)-\ln(l+t_1)),
\end{align}
where the first inequality holds due to \eqref{zerosg:serise:lemma:sequence-equ2} and the second inequality holds since $s_4(t)=a_1/t$ is a decreasing function in the interval $[1, +\infty)$.

Hence, \eqref{zerosg:serise:lemma:seqproof2} and \eqref{zerosg:serise:lemma:seqproof3} yield
\begin{align}\label{zerosg:serise:lemma:seqproof4}
&\prod_{\tau=l}^{k-1}(1-r_{1,\tau})
\le e^{-\sum_{\tau=l}^{k-1}r_{1,\tau}}\le\frac{(l+t_1)^{a_1}}{(k+t_1)^{a_1}}.
\end{align}

We have
\begin{align}\label{zerosg:serise:lemma:seqproof9}
\sum_{l=0}^{k-2}\prod_{\tau=l+1}^{k-1}(1-r_{1,\tau})r_{2,l}
&\le\sum_{l=0}^{k-2}\frac{(l+t_1+1)^{a_1}}{(k+t_1)^{a_1}}
\frac{a_2}{(l+t_1)^2}\nonumber\\
&\le\sum_{l=0}^{k-2}\frac{(l+t_1+1)^{a_1}}{(k+t_1)^{a_1}}
\frac{a_2}{(\frac{t_1}{t_1+1}l+t_1)^2}\nonumber\\
&=\frac{(\frac{t_1+1}{t_1})^2a_2}{(k+t_1)^{a_1}}
\sum_{l=0}^{k-2}\frac{(l+t_1+1)^{a_1}}{(l+t_1+1)^2}\nonumber\\
&=\frac{4a_2}{(k+t_1)^{a_1}}
\sum_{l=t_1+1}^{k+t_1-1}l^{a_1-2},
\end{align}
where the first inequality holds due to \eqref{zerosg:serise:lemma:seqproof4} and \eqref{zerosg:serise:lemma:sequence-equ3}.

From \eqref{zerosg:serise:lemma:seqproof1}, \eqref{zerosg:serise:lemma:seqproof4}, and \eqref{zerosg:serise:lemma:seqproof9}, we have \eqref{zerosg:serise:lemma:sequence-equ5}.

(ii) Denote $a=1-a_1$.  From \eqref{zerosg:serise:lemma:sequence-equ2} and $\delta=0$, we know that $a_1\in(0,1)$. Thus, $a\in(0,1)$.

From \eqref{zerosg:serise:lemma:sequence-equ0}--\eqref{zerosg:serise:lemma:sequence-equ3} and $\delta_1=0$, for any $k\in\mathbb{N}_+$, it holds that
\begin{align}\label{zerosg:serise:lemma:seqproof1-0}
z_{k}&\le (1-a_1)^kz_{0}+\sum_{\tau=0}^{k-1}(1-a_1)^{k-1-\tau}r_{2,\tau}
\le a^kz_{0}+a_2a^{k+t_1-1}\sum_{\tau=0}^{k-1}\frac{1}{(\tau+t_1)^{2}a^{\tau+t_1}}.
\end{align}

We have
\begin{align}\label{zerosg:serise:lemma:seqproof1.1-0}
&\sum_{\tau=0}^{k-1}\frac{1}{(\tau+t_1)^{2}a^{\tau+t_1}}
=\sum_{\tau=t_1}^{k+t_1-1}\frac{1}{\tau^{2}a^{\tau}}
=\sum_{\tau=t_1}^{t_2-1}s_2(\tau)
+\sum_{\tau=t_2}^{t_3-1}s_2(\tau)
+\sum_{\tau=t_3}^{k+t_1-1}s_2(\tau).
\end{align}

We know that $s_2(t)=1/(t^{2}a^{t})$ is decreasing and increasing in the intervals $[1,t_2-1]$ and $[t_2,+\infty)$, respectively, since
\begin{align*}
\frac{ds_2(t)}{dt}&=-s_2(t)\Big(\frac{2}{t}+\ln(a)\Big)\le0,~\forall t\in\Big(0,\frac{-2}{\ln(a)}\Big],\\
\frac{ds_2(t)}{dt}&=-s_2(t)\Big(\frac{2}{t}+\ln(a)\Big)\ge0,~\forall t\in\Big[\frac{-2}{\ln(a)},+\infty\Big).
\end{align*}
Thus, we have
\begin{subequations}
\begin{align}
&\sum_{\tau=k_1}^{t_2-1}s_2(\tau)
\le(t_2-k_1)s_2(k_1),~\forall k_1\in[1,t_2-1],\label{zerosg:serise:lemma:seqproof1.2-0}\\
&\sum_{\tau=k_2}^{t_3-1}s_2(\tau)
\le(t_3-k_2)s_2(t_3),~\forall k_2\in[t_2,t_3-1],\label{zerosg:serise:lemma:seqproof1.3-0}\\
&\sum_{\tau=t_3}^{k_3}s_2(\tau)
\le\int_{t_3}^{k_3+1}s_2(t)dt,~\forall k_3\ge t_3.\label{zerosg:serise:lemma:seqproof1.4-0}
\end{align}
\end{subequations}

Denote $b=1/a$. We have
\begin{align}\label{zerosg:serise:lemma:seqproof1.5-0}
\int_{t_3}^{k_3+1}s_2(t)dt&=\int_{t_3}^{k_3+1}\frac{b^t}{t^{2}}dt
=\int_{t_3}^{k_3+1}\frac{1}{\ln(b) t^{2}}db^{t}\nonumber\\
&=\frac{b^{k_3+1}}{\ln(b) (k_3+1)^{2}}-\frac{b^{t_3}}{\ln(b) t_3^{2}}
+\int_{t_3}^{k_3+1}\frac{2b^{t}}{\ln(b) t^{3}}dt\nonumber\\
&\le\frac{b^{k_3+1}}{\ln(b) (k_3+1)^{2}}
+\int_{t_3}^{k_3+1}\frac{2}{\ln(b) t}s_2(t)dt\nonumber\\
&\le\frac{b^{k_3+1}}{\ln(b) (k_3+1)^{2}}
+\frac{2}{\ln(b) t_3}\int_{t_3}^{k_3+1}s_2(t)dt\nonumber\\
&\le\frac{b^{k_3+1}}{\ln(b) (k_3+1)^{2}}
+\frac{1}{2}\int_{t_3}^{k_3+1}s_2(t)dt,
\end{align}
where the last inequality holds due to $t_3=\lceil \frac{-4}{\ln(1-a_1)}\rceil\ge\frac{-4}{\ln(1-a_1)}
=\frac{4}{\ln(b)}$.

From \eqref{zerosg:serise:lemma:seqproof1.4-0} and \eqref{zerosg:serise:lemma:seqproof1.5-0}, we have
\begin{align}
&\sum_{\tau=t_3}^{k_3}s_2(\tau)
\le\frac{-2}{\ln(a) (k_3+1)^{2}a^{k_3+1}},~\forall k_3\ge t_3.\label{zerosg:serise:lemma:seqproof1.6-0}
\end{align}

From \eqref{zerosg:serise:lemma:seqproof1-0}, \eqref{zerosg:serise:lemma:seqproof1.1-0}, \eqref{zerosg:serise:lemma:seqproof1.2-0}, \eqref{zerosg:serise:lemma:seqproof1.3-0}, and \eqref{zerosg:serise:lemma:seqproof1.6-0}, we get \eqref{zerosg:serise:lemma:sequence-equ6}.
\end{proof}

\begin{lemma}
Suppose Assumptions~\ref{sgd:assgraph} and \ref{sgd:assfiu}--\ref{sgd:ass:stochastic-grad:variance} hold. Then the following holds for Algorithm~\ref{sguT:algorithm-sg}
\begin{align}
\mathbf{E}_{\mathfrak{F}_k}[W_{1,k+1}]
&\le W_{1,k}-\|\bsx_k\|^2_{\eta_k\alpha_k\bsL-\frac{1}{2}\eta_k\bsK
-\frac{3}{2}\eta^2_k\alpha^2_k\bsL^2-\frac{1}{2}\eta_k(1+5\eta_k)L_f^2\bsK}
\nonumber\\
&\quad-\eta_k\beta_k\bsx^\top_k\bsK\Big(\bm{v}_k+\frac{1}{\beta_k}\bsg_k^0\Big)
+\Big\|\bm{v}_k+\frac{1}{\beta_k}\bsg_k^0\Big\|^2_{\frac{3}{2}\eta^2_k\beta^2_k\bsK}
+2n\sigma^2\eta^2_k,\label{sguT:v1k}
\end{align}
where $W_{1,k}=\frac{1}{2}\|\bm{x}_k \|^2_{\bsK}$.
\end{lemma}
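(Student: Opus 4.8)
The statement is a one-step descent-type inequality for the Lyapunov term $W_{1,k}=\tfrac12\|\bsx_k\|_{\bsK}^2$, so the natural approach is to expand $W_{1,k+1}=\tfrac12\|\bsx_{k+1}\|_{\bsK}^2$ using the primal update \eqref{sgu:kia-algo-dc-compact-x}, substitute $\bsx_{k+1}=\bsx_k-\eta_k(\alpha_k\bsL\bsx_k+\beta_k\bsv_k+\bsg_k^u)$, and then take the conditional expectation $\mathbf{E}_{\mathfrak{F}_k}[\,\cdot\,]$. First I would write
\[
W_{1,k+1}=W_{1,k}-\eta_k\bsx_k^\top\bsK(\alpha_k\bsL\bsx_k+\beta_k\bsv_k+\bsg_k^u)+\tfrac12\eta_k^2\|\alpha_k\bsL\bsx_k+\beta_k\bsv_k+\bsg_k^u\|_{\bsK}^2 ,
\]
using $\bsK\bsL=\bsL$ (from \eqref{nonconvex:KL-L-eq}, tensored with $\bm I_p$) to simplify $\bsx_k^\top\bsK\bsL\bsx_k=\bsx_k^\top\bsL\bsx_k$. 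For the cross term I would split the stochastic gradient as $\bsg_k^u=\bsg_k^0+(\bsg_k-\bsg_k^0)+(\bsg_k^u-\bsg_k)$ where $\bsg_k=\nabla\tilde f(\bsx_k)$ and $\bsg_k^0=\nabla\tilde f(\bar{\bsx}_k)$; the term $\mathbf{E}_{\mathfrak{F}_k}[\bsg_k^u-\bsg_k]=\bszero$ by the unbiasedness Assumption~\ref{sgd:ass:stochastic-grad:mean}, killing that contribution in the linear term, so the linear-in-noise piece only survives inside the quadratic term.

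**Handling the quadratic term.** For the squared norm $\|\alpha_k\bsL\bsx_k+\beta_k\bsv_k+\bsg_k^u\|_{\bsK}^2$ I would first use $\|\bsg_k^u\|_{\bsK}^2\le\|\bsg_k^u\|^2$ and the variance bound: writing $\bsg_k^u=\bsg_k+(\bsg_k^u-\bsg_k)$ and expanding, the cross term vanishes in expectation and the noise contributes $\mathbf{E}_{\mathfrak{F}_k}[\|\bsg_k^u-\bsg_k\|^2]\le n\sigma^2$ by Assumption~\ref{sgd:ass:stochastic-grad:variance} summed over the $n$ agents. To regroup $\bsg_k$ toward $\bsg_k^0$ I would use $\|\bsg_k-\bsg_k^0\|\le L_f\|\bsx_k-\bar{\bsx}_k\|=L_f\|\bsx_k\|_{\bsK}$ (Assumption~\ref{sgd:assfiu} plus $\bsK$ being the projection onto the disagreement subspace) together with Young's inequality to trade these cross terms into multiples of $\|\bsx_k\|_{\bsK}^2$, $\|\beta_k\bsv_k+\bsg_k^0\|^2$, and $\sigma^2$. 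The factors $3/2$, $1/2$, $5\eta_k$, etc. in the claimed bound are exactly the output of specific choices of the Young's-inequality constants; I would pick those constants (mostly $1$ or $1/2$) so that the $\eta_k\bsL\bsx_k$, $\eta_k\bsK\bsx_k$, and $\eta_k^2$-order terms collect into the stated quadratic forms, and bound $\bsv_k=\tfrac1{\beta_k}\cdot\beta_k\bsv_k$ so that everything involving $\bsv_k$ appears via the combination $\beta_k\bsv_k+\bsg_k^0$, which is the quantity that later couples to the dual-variable Lyapunov term.

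**Main obstacle.** The routine part is the algebra; the genuinely delicate bookkeeping is making the cross terms between $\alpha_k\bsL\bsx_k$, $\beta_k\bsv_k$, and the gradient pieces land with coefficients that telescope cleanly with the later Lyapunov terms — in particular, keeping the term $-\eta_k\beta_k\bsx_k^\top\bsK(\bsv_k+\tfrac1{\beta_k}\bsg_k^0)$ \emph{exactly} (not absorbed by Young) because it is designed to cancel against the cross term produced by the dual update, while simultaneously controlling $\|\alpha_k\bsL\bsx_k\|^2\le\rho(\bsL)\,\alpha_k^2\,\bsx_k^\top\bsL\bsx_k$ or $\le\alpha_k^2\|\bsx_k\|_{\bsL^2}^2$ so the $\bsL^2$ term appears with coefficient $3/2\,\eta_k^2\alpha_k^2$ rather than something larger. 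I expect the bulk of the effort to be verifying that the chosen splitting constants simultaneously (a) produce the positive-semidefinite-looking matrix $\eta_k\alpha_k\bsL-\tfrac12\eta_k\bsK-\tfrac32\eta_k^2\alpha_k^2\bsL^2-\tfrac12\eta_k(1+5\eta_k)L_f^2\bsK$ being subtracted, and (b) leave only $2n\sigma^2\eta_k^2$ of noise — the factor $2$ rather than $1$ suggesting one application of $\|a+b\|^2\le2\|a\|^2+2\|b\|^2$ on the noise term inside the quadratic. Once the constants are fixed, the inequality follows by taking $\mathbf{E}_{\mathfrak{F}_k}$ and collecting terms.
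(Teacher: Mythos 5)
Your plan follows essentially the same route as the paper's proof: expand $\tfrac12\|\bsx_{k+1}\|_{\bsK}^2$ via the primal update, use $\bsK\bsL=\bsL$, split $\bsg_k^u$ against $\bsg_k$ and $\bsg_k^0$ so that unbiasedness kills the linear noise term, keep the cross term $-\eta_k\beta_k\bsx_k^\top\bsK(\bsv_k+\tfrac1{\beta_k}\bsg_k^0)$ exact, and absorb the remaining cross terms by Cauchy--Schwarz/Young together with the Lipschitz bound $\|\bsg_k-\bsg_k^0\|\le L_f\|\bsx_k\|_{\bsK}$ and the variance bound; you even correctly identify that the factor $2$ in $2n\sigma^2\eta_k^2$ and the coefficients $\tfrac32\eta_k^2\alpha_k^2$, $\tfrac32\eta_k^2\beta_k^2$, $(1+5\eta_k)$ arise exactly as in the paper's bookkeeping. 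The proposal is correct and matches the paper's argument.
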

\begin{proof}
Noting that $\nabla\tilde{f}$ is Lipschitz-continuous with constant $L_{f}>0$ since Assumption~\ref{sgd:assfiu} is satisfied, we have that
\begin{align}
&\|\bsg^0_{k}-\bsg_{k}\|^2\le L_f^2\|\bar{\bsx}_{k}-\bsx_{k}\|^2=L_f^2\|\bsx_{k}\|^2_{\bsK}.\label{sguT:gg1}
\end{align}

From Assumptions~\ref{sgd:ass:stochastic-grad:xi}--\ref{sgd:ass:stochastic-grad:variance}, we know that
\begin{subequations}
\begin{align}
&\mathbf{E}_{\mathfrak{F}_k}[\bsg^u_k]=\bsg_k,\label{sguT:sgproof-hg1}\\
&\mathbf{E}_{\mathfrak{F}_k}[\|\bsg^u_k-\bsg_k\|^2]\le n\sigma^2.\label{sguT:sgproof-hg2}
\end{align}
\end{subequations}

From \eqref{sguT:gg1}, \eqref{sguT:sgproof-hg2}, and the Cauchy--Schwarz inequality, we have
\begin{align}
\mathbf{E}_{\mathfrak{F}_k}[\|\bsg_k^0-\bsg^u_k\|^2]
&=\mathbf{E}_{\mathfrak{F}_k}[\|\bsg_k^0-\bsg_k+\bsg_k-\bsg^u_k\|^2]\nonumber\\
&\le2\|\bsg_k^0-\bsg_k\|^2+2\mathbf{E}_{\mathfrak{U}_k}[\|\bsg_k-\bsg^u_k\|^2]\nonumber\\
&\le2L_f^2\|\bsx_{k}\|^2_{\bsK}+2 n\sigma^2.\label{sguT:sgproof-hg5}
\end{align}

We have
\begin{align}
\mathbf{E}_{\mathfrak{F}_k}[W_{1,k+1}]
&=\mathbf{E}_{\mathfrak{F}_k}\Big[\frac{1}{2}\|\bm{x}_{k+1} \|^2_{\bsK}\Big]\nonumber\\
&=\mathbf{E}_{\mathfrak{F}_k}\Big[\frac{1}{2}\|\bm{x}_k-\eta_k(\alpha_k\bsL\bm{x}_k+\beta_k\bm{v}_k+\bsg^u_k) \|^2_{\bsK}\Big]\nonumber\\
&=\mathbf{E}_{\mathfrak{F}_k}\Big[\frac{1}{2}\|\bm{x}_k\|^2_{\bsK}-\eta_k\alpha_k\|\bsx_k\|^2_{\bsL}
+\frac{1}{2}\eta^2_k\alpha^2_k\|\bsx_k\|^2_{\bsL^2}
\nonumber\\
&\quad-\eta_k\beta_k\bsx^\top_k({\bm I}_{np}-\eta_k\alpha_k\bsL)\bsK\Big(\bm{v}_k+\frac{1}{\beta_k}\bsg^u_k\Big)
+\frac{1}{2}\eta^2_k\beta^2_k\Big\|\bm{v}_k+\frac{1}{\beta_k}\bsg^u_k\Big\|^2_{\bsK}\Big]\nonumber\\
&=\frac{1}{2}\|\bm{x}_k\|^2_{\bsK}-\|\bsx_k\|^2_{\eta_k\alpha_k\bsL
-\frac{1}{2}\eta^2_k\alpha^2_k\bsL^2}-\eta_k\beta_k\bsx^\top_k({\bm I}_{np}\nonumber\\
&\quad-\eta_k\alpha_k\bsL)\bsK\Big(\bm{v}_k
+\frac{1}{\beta_k}\bsg_k^0
+\frac{1}{\beta_k}\bsg_k-\frac{1}{\beta_k}\bsg_k^0\Big)\nonumber\\
&\quad+\frac{1}{2}\eta^2_k\beta^2_k\mathbf{E}_{\mathfrak{F}_k}
\Big[\Big\|\bm{v}_k+\frac{1}{\beta_k}\bsg_k^0
+\frac{1}{\beta_k}\bsg^u_k-\frac{1}{\beta_k}\bsg_k^0\Big\|^2_{\bsK}\Big]\nonumber\\
&\le W_{1,k}-\|\bsx_k\|^2_{\eta_k\alpha_k\bsL
-\frac{1}{2}\eta^2_k\alpha^2_k\bsL^2}
-\eta_k\beta_k\bsx^\top_k\bsK\Big(\bm{v}_k+\frac{1}{\beta_k}\bsg_k^0\Big)\nonumber\\
&\quad+\frac{\eta_k}{2}\|\bm{x}_k\|^2_{\bsK}
+\frac{\eta_k}{2}\|\bsg_k-\bsg_k^0\|^2
+\frac{1}{2}\eta^2_k\alpha^2_k\|\bm{x}_k\|^2_{\bsL^2}
+\frac{1}{2}\eta^2_k\beta^2_k\Big\|\bm{v}_k+\frac{1}{\beta_k}\bsg_k^0\Big\|^2_{\bsK}\nonumber\\
&\quad+\frac{1}{2}\eta^2_k\alpha^2_k\|\bm{x}_k\|^2_{\bsL^2}
+\frac{1}{2}\eta^2_k\|\bsg_k-\bsg_k^0\|^2\nonumber\\
&\quad+\eta^2_k\beta^2_k\Big\|\bm{v}_k+\frac{1}{\beta_k}\bsg_k^0\Big\|^2_{\bsK}
+\eta^2_k\mathbf{E}_{\mathfrak{F}_k}[\|\bsg^u_k-\bsg_k^0\|^2]\nonumber\\
&=W_{1,k}-\|\bsx_k\|^2_{\eta_k\alpha_k\bsL-\frac{1}{2}\eta_k\bsK
-\frac{3}{2}\eta^2_k\alpha^2_k\bsL^2}\nonumber\\
&\quad+\frac{\eta_k}{2}(1+\eta_k)\|\bsg_k-\bsg_k^0\|^2
+\eta^2_k\mathbf{E}_{\mathfrak{F}_k}[\|\bsg^u_k-\bsg_k^0\|^2]
\nonumber\\
&\quad-\eta_k\beta_k\bsx^\top_k\bsK\Big(\bm{v}_k+\frac{1}{\beta_k}\bsg_k^0\Big)
+\Big\|\bm{v}_k+\frac{1}{\beta_k}\bsg_k^0\Big\|^2_{\frac{3}{2}\eta^2_k\beta^2_k\bsK},
\label{sguT:v1k-1}
\end{align}
where the second equality holds due to \eqref{sgu:kia-algo-dc-compact-x}; the third equality holds due to \eqref{nonconvex:KL-L-eq} in Lemma~\ref{nonconvex:lemma-Xinlei}; the fourth equality holds since $\bsx_{k}$ and $\bsv_{k}$ are independent of $\mathfrak{F}_k$ and \eqref{sguT:sgproof-hg1}; and the inequality holds due to the Cauchy--Schwarz inequality and $\rho(\bsK)=1$.

Then, from \eqref{sguT:gg1}, \eqref{sguT:sgproof-hg5}, and \eqref{sguT:v1k-1}, we have \eqref{sguT:v1k}.
\end{proof}

\begin{lemma}
Suppose Assumptions~\ref{sgd:assgraph} and \ref{sgd:assfiu} hold, and $\{\beta_k\}$ is non-decreasing. Then the following holds for Algorithm~\ref{sguT:algorithm-sg}
\begin{align}\label{sguT:v2k}
W_{2,k+1}
&\le W_{2,k}
+(1+\omega_k)\eta_k\beta_k\bsx^\top_k(\bsK+\kappa_1\bsL)\Big(\bm{v}_k+\frac{1}{\beta_k}\bsg_k^0\Big)\nonumber\\
&\quad+\frac{1}{2}(\eta_k+\omega_k+\eta_k\omega_k)\Big(\frac{1}{\rho_2(L)}+\kappa_1\Big)
\Big\|\bm{v}_k+\frac{1}{\beta_k}\bsg_{k}^0\Big\|^2_{\bsK}\nonumber\\
&\quad+\|\bsx_k\|^2_{(1+\omega_k)\eta^2_k\beta^2_k(\bsL+\kappa_1\bsL^2)}
+\frac{\eta_k}{\beta^2_k}\Big(\eta_k+\frac{1}{2}\Big)(1+\omega_k)
\Big(\frac{1}{\rho_2(L)}+\kappa_1\Big)L_f^2\|\bar{\bsg}^u_k\|^2\nonumber\\
&\quad+\frac{1}{2}\Big(\frac{1}{\rho_2(L)}+\kappa_1\Big)(\omega_k+\omega_k^2)\|\bsg_{k+1}^0\|^2,
\end{align}
where $W_{2,k}=\frac{1}{2}\|\bsv_k+\frac{1}{\beta_k}\bsg_k^0\|^2_{\bsQ+\kappa_1\bsK}$, $\bsQ=R\Lambda^{-1}_1R^{\top}\otimes {\bm I}_p$ with matrices $R$ and $\Lambda^{-1}_1$  given in Lemma~\ref{nonconvex:lemma-Xinlei}, $\omega_k=\frac{1}{\beta_{k}}-\frac{1}{\beta_{k+1}}$, and $\kappa_1>0$ is a constant.
\end{lemma}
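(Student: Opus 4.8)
The plan is to introduce the shorthand $\bsw_k := \bsv_k + \frac{1}{\beta_k}\bsg_k^0$ and $\bsM := \bsQ + \kappa_1\bsK$, so that $W_{2,k} = \tfrac12\|\bsw_k\|_\bsM^2$, and to derive a one-step inequality for $\|\bsw_k\|_\bsM^2$. First I would rewrite the increment: substituting the dual update $\bsv_{k+1} = \bsv_k + \eta_k\beta_k\bsL\bsx_k$ and using $\tfrac{1}{\beta_{k+1}} = \tfrac{1}{\beta_k} - \omega_k$ inside the term $\tfrac{1}{\beta_{k+1}}\bsg_{k+1}^0$ gives
\[
\bsw_{k+1} = \bsw_k + \eta_k\beta_k\bsL\bsx_k + \tfrac{1}{\beta_k}\big(\bsg_{k+1}^0 - \bsg_k^0\big) - \omega_k\bsg_{k+1}^0 ,
\]
where $\omega_k \ge 0$ since $\{\beta_k\}$ is non-decreasing. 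Expanding $\tfrac12\|\bsw_{k+1}\|_\bsM^2$ then produces $W_{2,k}$, a cross term $\bsw_k^\top\bsM(\cdot)$ for each of the three increments, and the quadratic $\tfrac12\big\|\eta_k\beta_k\bsL\bsx_k + \tfrac1{\beta_k}(\bsg_{k+1}^0-\bsg_k^0) - \omega_k\bsg_{k+1}^0\big\|_\bsM^2$.

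Next I would simplify everything that contains $\bsL\bsx_k$ via the spectral identities of Lemma~\ref{nonconvex:lemma-Xinlei}. From $\bsQ\bsL = \bsK$ and $\bsK\bsL = \bsL$ we get $\bsM\bsL = \bsK + \kappa_1\bsL$ and $\bsL\bsM\bsL = \bsL + \kappa_1\bsL^2$, so $\bsw_k^\top\bsM(\eta_k\beta_k\bsL\bsx_k) = \eta_k\beta_k\bsx_k^\top(\bsK + \kappa_1\bsL)\bsw_k$ and $\|\eta_k\beta_k\bsL\bsx_k\|_\bsM^2 = \|\bsx_k\|_{\eta_k^2\beta_k^2(\bsL + \kappa_1\bsL^2)}^2$; these are exactly the first and third terms on the right-hand side of \eqref{sguT:v2k}, up to the factor $(1+\omega_k)$, which will be created when the mixed cross terms are merged into them. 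For the two gradient-perturbation increments I would use $\bsM \le (\rho_2(L)^{-1} + \kappa_1)\bsK$ from \eqref{nonconvex:lemma-eq2} together with $\rho(\bsK) = 1$, so that each of their $\|\cdot\|_\bsM^2$ contributions is dominated by $(\rho_2(L)^{-1} + \kappa_1)\|\cdot\|^2$; this yields the $\|\bsg_{k+1}^0\|^2$ term directly, and for $\bsg_{k+1}^0 - \bsg_k^0$ I would invoke $L_f$-smoothness of $\nabla\tilde f$ (Assumption~\ref{sgd:assfiu}) together with the fact that, since ${\bm 1}_n^\top L = {\bm 0}_n^\top$ and $\sum_j v_{j,k} = {\bm 0}_p$ is preserved by the iteration, the averaged primal step reduces to $\bar{\bsx}_{k+1} - \bar{\bsx}_k = -\eta_k\bar{\bsg}_k^u$; hence $\|\bsg_{k+1}^0 - \bsg_k^0\|^2 \le L_f^2\eta_k^2\|\bar{\bsg}_k^u\|^2$, which produces the $\|\bar{\bsg}_k^u\|^2$ term.

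Finally I would dispose of the remaining cross terms — $\bsw_k^\top\bsM B$ and $\bsw_k^\top\bsM C$ with $B = \tfrac1{\beta_k}(\bsg_{k+1}^0-\bsg_k^0)$, $C = -\omega_k\bsg_{k+1}^0$, together with the mixed products inside the expanded square — by Cauchy--Schwarz in the $\|\cdot\|_\bsM$ seminorm, choosing the splitting weights as functions of $\omega_k$ and $\eta_k$ so that the $\bsL\bsx_k$-cross term and the $\|\bsx_k\|^2$-term each absorb an extra factor $\omega_k$ (turning $1$ into $1+\omega_k$), while all other residues fold into the $\|\bsw_k\|_\bsK^2$, $\|\bar{\bsg}_k^u\|^2$, and $\|\bsg_{k+1}^0\|^2$ terms with the coefficients stated in \eqref{sguT:v2k}; monotonicity of $\{\beta_k\}$ is used once more to replace $\beta_{k+1}^{-1}$ by $\beta_k^{-1}$ where needed. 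The main obstacle is precisely this last accounting: the cross terms must be split so that the coefficients in \eqref{sguT:v2k} come out exactly, and one must keep in mind that $\bsM$ is only positive semi-definite, so the ``Young's inequality'' step here is the Cauchy--Schwarz inequality for the seminorm $\|\cdot\|_\bsM$ rather than for a genuine inner product.
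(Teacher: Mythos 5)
Your proposal is correct and follows essentially the same route as the paper: substitute the dual update and $\tfrac{1}{\beta_{k+1}}=\tfrac{1}{\beta_k}-\omega_k$ into $\bsw_{k+1}$, use $\bsQ\bsL=\bsK$ and $\bsK\bsL=\bsL$ to reduce the $\bsL\bsx_k$ terms, bound $\bsQ+\kappa_1\bsK\le(\rho_2(L)^{-1}+\kappa_1)\bsK$ for the gradient perturbations, and control $\|\bsg^0_{k+1}-\bsg^0_k\|$ via $\bar{\bsx}_{k+1}-\bar{\bsx}_k=-\eta_k\bar{\bsg}^u_k$ and smoothness. The only organizational difference is that the paper first peels off the $-\omega_k\bsg^0_{k+1}$ perturbation with a weighted Young inequality (weight $\omega_k$), yielding the factor $(1+\omega_k)$ on the whole remaining square and the $\tfrac12(\omega_k+\omega_k^2)\|\bsg^0_{k+1}\|^2$ term in one stroke; if you instead expand all three increments at once you must group $-\omega_k\bsg^0_{k+1}$ against the entire vector $\bsw_k+\eta_k\beta_k\bsL\bsx_k+\tfrac{1}{\beta_k}(\bsg^0_{k+1}-\bsg^0_k)$ (not term by term) to land on exactly the stated coefficients.
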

\begin{proof}
Denote $\bar{v}_k=\frac{1}{n}({\bm 1}_n^\top\otimes{\bm I}_p)\bsv_k$. Then,
from \eqref{sgu:kia-algo-dc-compact-v}, we know that
\begin{align}
\bar{v}_{k+1}=\bar{v}_k.\label{sguT:vbardynamic}
\end{align}
Then, from \eqref{sguT:vbardynamic} and $\sum_{i=1}^{n}v_{i,0}={\bm 0}_p$, we know that
\begin{align}
\bar{v}_k={\bm 0}_p.\label{sguT:vkn}
\end{align}
Then, from \eqref{sguT:vkn} and \eqref{sgu:kia-algo-dc-compact-x}, we know that
\begin{align}
\bar{\bsx}_{k+1}=\bar{\bsx}_{k}-\eta_k\bar{\bsg}^u_k.\label{sguT:xbardynamic-sg}
\end{align}

Since $\nabla\tilde{f}$ is Lipschitz-continuous and \eqref{sguT:xbardynamic-sg}, we have
\begin{align}
&\|\bsg^0_{k+1}-\bsg^0_{k}\|^2\le L_f^2\|\bar{\bsx}_{k+1}-\bar{\bsx}_{k}\|^2=\eta^2_kL_f^2\|\bar{\bsg}^u_k\|^2.\label{sguT:gg}
\end{align}

We know that $\omega_k\ge0$ since $\{\beta_k\}$ is non-decreasing. We have
\begin{align}
W_{2,k+1}
&=\frac{1}{2}\Big\|\bsv_{k+1}+\frac{1}{\beta_{k+1}}\bsg_{k+1}^0\Big\|^2_{\bsQ+\kappa_1\bsK}\nonumber\\
&=\frac{1}{2}\Big\|\bsv_{k+1}+\frac{1}{\beta_{k}}\bsg_{k+1}^0
+\Big(\frac{1}{\beta_{k+1}}-\frac{1}{\beta_{k}}\Big)\bsg_{k+1}^0\Big\|^2_{\bsQ+\kappa_1\bsK}\nonumber\\
&\le\frac{1}{2}(1+\omega_k)\Big\|\bsv_{k+1}+\frac{1}{\beta_{k}}\bsg_{k+1}^0\Big\|^2_{\bsQ
+\kappa_1\bsK}
+\frac{1}{2}(\omega_k+\omega_k^2)\|\bsg_{k+1}^0\|^2_{\bsQ+\kappa_1\bsK},\label{sguT:v2k1}
\end{align}
where the inequality holds due to the Cauchy--Schwarz inequality.

For the first term in the right-hand side of \eqref{sguT:v2k1}, we have
\begin{align}
\frac{1}{2}\Big\|\bsv_{k+1}+\frac{1}{\beta_{k}}\bsg_{k+1}^0\Big\|^2_{\bsQ+\kappa_1\bsK}
&=\frac{1}{2}\Big\|\bm{v}_k+\frac{1}{\beta_k}\bsg_{k}^0+\eta_k\beta_k\bsL\bm{x}_k
+\frac{1}{\beta_k}(\bsg_{k+1}^0-\bsg_{k}^0) \Big\|^2_{\bsQ+\kappa_1\bsK}\nonumber\\
&=\frac{1}{2}\Big\|\bm{v}_k+\frac{1}{\beta_k}\bsg_{k}^0\Big\|^2_{\bsQ+\kappa_1\bsK}
+\eta_k\beta_k\bsx^\top_k(\bsK+\kappa_1\bsL)\Big(\bm{v}_k+\frac{1}{\beta_k}\bsg_k^0\Big)\nonumber\\
&\quad+\|\bsx_k\|^2_{\frac{1}{2}\eta_k^2\beta_k^2(\bsL+\kappa_1\bsL^2)}
+\frac{1}{2\beta^2_k}\|\bsg_{k+1}^0-\bsg_{k}^0\|^2_{\bsQ+\kappa_1\bsK}\nonumber\\
&\quad+\frac{1}{\beta_k}\Big(\bm{v}_k+\frac{1}{\beta_k}\bsg_{k}^0
+\eta_k\beta_k\bsL\bm{x}_k\Big)^\top(\bsQ
+\kappa_1\bsK)(\bsg_{k+1}^0-\bsg_{k}^0)\nonumber\\
&\le W_{2,k}+\eta_k\beta_k\bsx^\top_k(\bsK+\kappa_1\bsL)\Big(\bm{v}_k+\frac{1}{\beta_k}\bsg_k^0\Big)\nonumber\\
&\quad+\|\bsx_k\|^2_{\frac{1}{2}\eta_k^2\beta_k^2(\bsL+\kappa_1\bsL^2)}
+\frac{1}{2\beta^2_k}\|\bsg_{k+1}^0-\bsg_{k}^0\|^2_{\bsQ+\kappa_1\bsK}\nonumber\\
&\quad+\frac{\eta_k}{2}\Big\|\bm{v}_k+\frac{1}{\beta_k}\bsg_{k}^0\Big\|^2_{\bsQ+\kappa_1\bsK}
+\frac{1}{2\eta_k\beta_k^2}\|\bsg_{k+1}^0-\bsg_{k}^0\|^2_{\bsQ+\kappa_1\bsK}\nonumber\\
&\quad+\frac{1}{2}\eta^2_k\beta^2_k\|\bsL\bm{x}_k\|^2_{\bsQ+\kappa_1\bsK}
+\frac{1}{2\beta^2_k}\|\bsg_{k+1}^0-\bsg_{k}^0\|^2_{\bsQ+\kappa_1\bsK}\nonumber\\
&=W_{2,k}+\eta_k\beta_k\bsx^\top_k(\bsK+\kappa_1\bsL)\Big(\bm{v}_k+\frac{1}{\beta_k}\bsg_k^0\Big)\nonumber\\
&\quad+\|\bsx_k\|^2_{\eta^2_k\beta^2_k(\bsL+\kappa_1\bsL^2)}
+\Big\|\bm{v}_k+\frac{1}{\beta_k}\bsg_{k}^0\Big\|^2_{\frac{1}{2}\eta_k(\bsQ+\kappa_1\bsK)}\nonumber\\
&\quad+\frac{1}{\beta^2_k}\Big(1+\frac{1}{2\eta_k}\Big)\|\bsg_{k+1}^0-\bsg_{k}^0\|^2_{\bsQ+\kappa_1\bsK}\nonumber\\
&\le W_{2,k}+\eta_k\beta_k\bsx^\top_k(\bsK+\kappa_1\bsL)\Big(\bm{v}_k+\frac{1}{\beta_k}\bsg_k^0\Big)\nonumber\\
&\quad+\|\bsx_k\|^2_{\eta^2_k\beta^2_k(\bsL+\kappa_1\bsL^2)}
+\Big\|\bm{v}_k+\frac{1}{\beta_k}\bsg_{k}^0\Big\|^2_{\frac{1}{2}\eta_k(\bsQ+\kappa_1\bsK)}\nonumber\\
&\quad+\frac{1}{\beta^2_k}\Big(1+\frac{1}{2\eta_k}\Big)\Big(\frac{1}{\rho_2(L)}+\kappa_1\Big)
\|\bsg_{k+1}^0-\bsg_{k}^0\|^2\nonumber\\
&\le W_{2,k}+\eta_k\beta_k\bsx^\top_k(\bsK+\kappa_1\bsL)\Big(\bm{v}_k+\frac{1}{\beta_k}\bsg_k^0\Big)\nonumber\\
&\quad+\|\bsx_k\|^2_{\eta^2_k\beta^2_k(\bsL+\kappa_1\bsL^2)}
+\Big\|\bm{v}_k+\frac{1}{\beta_k}\bsg_{k}^0\Big\|^2_{\frac{1}{2}\eta_k(\bsQ+\kappa_1\bsK)}\nonumber\\
&~~~+\frac{\eta_k}{\beta^2_k}\Big(\eta_k+\frac{1}{2}\Big)
\Big(\frac{1}{\rho_2(L)}+\kappa_1\Big)L_f^2\|\bar{\bsg}^u_k\|^2,\label{sguT:v2k2}
\end{align}
where the first equality holds due to \eqref{sgu:kia-algo-dc-compact-v}; the second equality holds due to \eqref{nonconvex:KL-L-eq} and \eqref{nonconvex:lemma-eq} in Lemma~\ref{nonconvex:lemma-Xinlei};  the first inequality holds due to the Cauchy--Schwarz inequality; the last equality holds due to \eqref{nonconvex:KL-L-eq} and \eqref{nonconvex:lemma-eq} in Lemma~\ref{nonconvex:lemma-Xinlei}; the second inequality holds due to $\rho(\bsQ+\kappa_1\bsK)\le\rho(\bsQ)+\kappa_1\rho(\bsK)$, \eqref{nonconvex:lemma-eq2}, $\rho(\bsK)=1$; and the last  inequality holds due to \eqref{sguT:gg}.

For the second term in the right-hand side of \eqref{sguT:v2k1}, we have
\begin{align}\label{sguT:v2k4}
\|\bsg_{k+1}^0\|^2_{\bsQ+\kappa_1\bsK}
\le\Big(\frac{1}{\rho_2(L)}+\kappa_1\Big)\|\bsg_{k+1}^0\|^2.
\end{align}

Also note that
\begin{align}\label{sguT:v2k5}
\Big\|\bm{v}_k+\frac{1}{\beta_k}\bsg_{k}^0\Big\|^2_{\bsQ+\kappa_1\bsK}
\le\Big(\frac{1}{\rho_2(L)}+\kappa_1\Big)\Big\|\bm{v}_k+\frac{1}{\beta_k}\bsg_{k}^0
\Big\|^2_{\bsK}.
\end{align}

Then, from \eqref{sguT:v2k1}--\eqref{sguT:v2k5}, we have \eqref{sguT:v2k}.
\end{proof}

\begin{lemma}
Suppose Assumptions~\ref{sgd:assgraph} and \ref{sgd:assfiu}--\ref{sgd:ass:stochastic-grad:variance} hold, and $\{\beta_k\}$ in non-decreasing. Then the following holds for Algorithm~\ref{sguT:algorithm-sg}
\begin{align}
\mathbf{E}_{\mathfrak{F}_k}[W_{3,k+1}]
&\le W_{3,k}
-(1+\omega_k)\eta_k\alpha_k\bm{x}_k^\top\bsL\Big(\bm{v}_k+\frac{1}{\beta_k}\bsg_{k}^0\Big)
+\|\bm{x}_k\|^2_{\eta_k(\beta_k\bsL+\frac{1}{2}\bsK)
+\eta^2_k(\frac{1}{2}\alpha^2_k-\alpha_k\beta_k+\beta^2_k)\bsL^2}\nonumber\\
&\quad+\|\bsx_k\|^2_{\frac{1}{2}\omega_k\eta_k\alpha_k\bsL^2+\frac{1}{2}
\eta_k(1+3\eta_k)L_f^2\bsK}
+\frac{\eta_k}{2\beta^2_k}(1+3\eta_k)L_f^2\mathbf{E}_{\mathfrak{F}_k}[\|\bar{\bsg}^u_{k}\|^2]
+n\sigma^2\eta^2_k\nonumber\\
&\quad-\Big\|\bm{v}_k+\frac{1}{\beta_k}\bsg_{k}^0\Big\|^2_{\eta_k(\beta_k-\frac{1}{2}
-\eta_k\beta^2_k-\frac{1}{2}\omega_k\alpha_k)\bsK}
+\frac{1}{2}\omega_k\mathbf{E}_{\mathfrak{F}_k}[2W_{1,k+1}+\|\bsg_{k+1}^0\|^2],\label{sguT:v3k}
\end{align}
where $W_{3,k}=\bsx_k^\top\bsK(\bm{v}_k+\frac{1}{\beta_k}\bsg_k^0)$.
\end{lemma}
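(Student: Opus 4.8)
The plan is to treat \eqref{sguT:v3k} as a one-step expansion of the bilinear Lyapunov piece $W_{3,k}=\bsx_k^\top\bsK\bsw_k$, where I abbreviate $\bsw_k=\bsv_k+\tfrac1{\beta_k}\bsg_k^0$, followed by Cauchy--Schwarz on every cross term it produces. First I would peel off the step-size drift: using $\tfrac1{\beta_{k+1}}=\tfrac1{\beta_k}-\omega_k$, write $W_{3,k+1}=\bsx_{k+1}^\top\bsK\bigl(\bsv_{k+1}+\tfrac1{\beta_k}\bsg_{k+1}^0\bigr)-\omega_k\,\bsx_{k+1}^\top\bsK\bsg_{k+1}^0$ and bound the last term by Cauchy--Schwarz with $\rho(\bsK)=1$ and $\|\bsx_{k+1}\|_{\bsK}^2=2W_{1,k+1}$, which yields the term $\tfrac12\omega_k\,\mathbf{E}_{\mathfrak{F}_k}[2W_{1,k+1}+\|\bsg_{k+1}^0\|^2]$ after conditioning. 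The hypothesis that $\{\beta_k\}$ is non-decreasing is used here and repeatedly below, ensuring $\omega_k\ge0$ so that every Cauchy--Schwarz step carrying a weight proportional to $\omega_k$ points in the correct direction.

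Next I would substitute $\bsx_{k+1}$ from \eqref{sgu:kia-algo-dc-compact-x} and $\bsv_{k+1}$ from \eqref{sgu:kia-algo-dc-compact-v} into $\bsx_{k+1}^\top\bsK(\bsv_{k+1}+\tfrac1{\beta_k}\bsg_{k+1}^0)$, write $\bsg_{k+1}^0=\bsg_k^0+(\bsg_{k+1}^0-\bsg_k^0)$, and expand the bilinear form using $\bsK\bsL=\bsL\bsK=\bsL$ and $\bsK^2=\bsK$ from \eqref{nonconvex:KL-L-eq}. After regrouping, the leading term is $W_{3,k}$, the only genuine surviving cross term is $-(1+\omega_k)\eta_k\alpha_k\bsx_k^\top\bsL\bsw_k$ (the $\omega_k$ part coming from the terms in which a $\tfrac1{\beta_{k+1}}$ factor is re-expanded as $\tfrac1{\beta_k}-\omega_k$), the various $\|\bsx_k\|^2$-type pieces combine into the stated $\eta_k(\beta_k\bsL+\tfrac12\bsK)+\eta_k^2(\tfrac12\alpha_k^2-\alpha_k\beta_k+\beta_k^2)\bsL^2$ and $\tfrac12\omega_k\eta_k\alpha_k\bsL^2+\tfrac12\eta_k(1+3\eta_k)L_f^2\bsK$ weights, and the $\bsw_k$-type pieces combine into the stated negative $\bsK$-weight $\eta_k(\beta_k-\tfrac12-\eta_k\beta_k^2-\tfrac12\omega_k\alpha_k)$.

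Then I would take $\mathbf{E}_{\mathfrak{F}_k}$. Since $\bsx_k,\bsv_k,\bsg_k^0$ are $\mathcal F_{k-1}$-measurable while $\bsg_k^u$ is independent of $\mathcal F_{k-1}$, the unbiasedness \eqref{sguT:sgproof-hg1} replaces $\bsg_k^u$ by $\bsg_k$ in every linear term, and the residual $\bsg_k-\bsg_k^0$ is absorbed by $\|\bsg_k-\bsg_k^0\|^2\le L_f^2\|\bsx_k\|_{\bsK}^2$ from \eqref{sguT:gg1}, which feeds the $\tfrac12\eta_k(1+3\eta_k)L_f^2\bsK$ contribution; the quadratic-in-$\bsg_k^u$ terms are bounded by Cauchy--Schwarz and then $\mathbf{E}_{\mathfrak{F}_k}[\|\bsg_k^u-\bsg_k\|^2]\le n\sigma^2$ (or \eqref{sguT:sgproof-hg5}), giving $n\sigma^2\eta_k^2$. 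Crucially, $\bsg_{k+1}^0$ is \emph{not} $\mathfrak{F}_k$-measurable (it depends on $\bar{\bsg}_k^u$ through $\bar{\bsx}_{k+1}$), so every term containing $\bsg_{k+1}^0-\bsg_k^0$ is bounded by Cauchy--Schwarz \emph{before} conditioning, after which $\|\bsg_{k+1}^0-\bsg_k^0\|^2\le\eta_k^2L_f^2\|\bar{\bsg}_k^u\|^2$ from \eqref{sguT:gg} is inserted; for instance $\tfrac1{\beta_k}\bsx_k^\top\bsK(\bsg_{k+1}^0-\bsg_k^0)\le\tfrac{\eta_k}{2}\|\bsx_k\|_{\bsK}^2+\tfrac{\eta_k L_f^2}{2\beta_k^2}\|\bar{\bsg}_k^u\|^2$, and the free Cauchy--Schwarz weights on the remaining such terms are tuned so that the accumulated coefficient of $\mathbf{E}_{\mathfrak{F}_k}[\|\bar{\bsg}_k^u\|^2]$ is exactly $\tfrac{\eta_k}{2\beta_k^2}(1+3\eta_k)L_f^2$. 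Collecting everything gives \eqref{sguT:v3k}.

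The main obstacle is purely the volume and precision of the bookkeeping: on the order of a dozen cross terms each need a Cauchy--Schwarz split whose free weight must be tuned so that the $\bsx_k$-part lands with the prescribed $\bsL$-, $\bsL^2$- and $\bsK$-weights while the $\bsw_k$- and $\bar{\bsg}_k^u$-parts land with the prescribed coefficients. The one genuinely subtle point is recognising which terms must be split before conditioning because they involve the non-$\mathfrak{F}_k$-measurable $\bsg_{k+1}^0$, and keeping the signs of the $\omega_k\ge0$ terms straight.
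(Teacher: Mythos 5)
Your plan is, in structure, the paper's own proof: peel off the drift term $-\omega_k\bsx_{k+1}^\top\bsK\bsg_{k+1}^0$ first and bound it by Cauchy--Schwarz to get $\tfrac12\omega_k\mathbf{E}_{\mathfrak{F}_k}[2W_{1,k+1}+\|\bsg_{k+1}^0\|^2]$; expand $\bsx_{k+1}^\top\bsK(\bsv_{k+1}+\tfrac1{\beta_k}\bsg_{k+1}^0)$ via the updates and $\bsK\bsL=\bsL$; condition with unbiasedness; split every term containing $\bsg_{k+1}^0-\bsg_k^0$ by Cauchy--Schwarz \emph{before} inserting $\|\bsg_{k+1}^0-\bsg_k^0\|^2\le\eta_k^2L_f^2\|\bar{\bsg}_k^u\|^2$ (the paper's accumulated coefficient $(\tfrac1{2\eta_k\beta_k^2}+\tfrac3{2\beta_k^2})$ indeed yields exactly $\tfrac{\eta_k}{2\beta_k^2}(1+3\eta_k)L_f^2$), and use $\|\bsg_k-\bsg_k^0\|^2\le L_f^2\|\bsx_k\|^2_{\bsK}$ and the variance bound for the remaining pieces. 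All of that is right, and you correctly identified the measurability subtlety around $\bsg_{k+1}^0$.

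The one point where your account would fail as written is the origin of the factor $(1+\omega_k)$ in the cross term. You attribute it to ``re-expanding a $\tfrac1{\beta_{k+1}}$ factor as $\tfrac1{\beta_k}-\omega_k$,'' but after your first step there are no $\tfrac1{\beta_{k+1}}$ factors left anywhere: the direct expansion produces only $-\eta_k\alpha_k\bsx_k^\top\bsL(\bsv_k+\tfrac1{\beta_k}\bsg_k^0)$. The paper manufactures the coefficient by adding and subtracting $\omega_k\eta_k\alpha_k\bsx_k^\top\bsL(\bsv_k+\tfrac1{\beta_k}\bsg_k^0)$ and then bounding the surplus term by Cauchy--Schwarz (its equation \eqref{sguT:v3k3}), and it is precisely this step that generates the $\tfrac12\omega_k\eta_k\alpha_k\bsL^2$ contribution to the $\|\bsx_k\|^2$ weight and the $-\tfrac12\omega_k\alpha_k$ contribution to the $\bsK$-weight on $\bsv_k+\tfrac1{\beta_k}\bsg_k^0$. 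In your write-up those two pieces appear in the final bound with no mechanism producing them, which is the internal inconsistency to fix; once you replace the ``re-expansion'' claim by this add-and-subtract, the argument closes.
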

\begin{proof}
We have
\begin{align}
W_{3,k+1}&=\bsx_{k+1}^\top\bsK\Big(\bm{v}_{k+1}+\frac{1}{\beta_{k+1}}\bsg_{k+1}^0\Big)\nonumber\\
&=\bsx_{k+1}^\top\bsK\Big(\bm{v}_{k+1}+\frac{1}{\beta_{k}}\bsg_{k+1}^0
+\Big(\frac{1}{\beta_{k+1}}-\frac{1}{\beta_{k}}\Big)\bsg_{k+1}^0\Big)\nonumber\\
&=\bsx_{k+1}^\top\bsK\Big(\bm{v}_{k+1}+\frac{1}{\beta_{k}}\bsg_{k+1}^0\Big)
-\omega_k\bsx_{k+1}^\top\bsK\bsg_{k+1}^0\nonumber\\
&\le\bsx_{k+1}^\top\bsK\Big(\bm{v}_{k+1}+\frac{1}{\beta_{k}}\bsg_{k+1}^0\Big)
+\frac{1}{2}\omega_k(\|\bsx_{k+1}\|^2_{\bsK}+\|\bsg_{k+1}^0\|^2).\label{sguT:v3k1}
\end{align}
For the first term in the right-hand side of \eqref{sguT:v3k1}, we have
\begin{align}
\mathbf{E}_{\mathfrak{F}_k}\Big[\bsx_{k+1}^\top\bsK\Big(\bm{v}_{k+1}
+\frac{1}{\beta_k}\bsg_{k+1}^0\Big)\Big]
&=\mathbf{E}_{\mathfrak{F}_k}\Big[(\bm{x}_k-\eta_k(\alpha_k\bsL\bm{x}_k+\beta_k\bm{v}_k
+\bsg_k^0+\bsg^u_k-\bsg_k^0))^\top
\nonumber\\
&\quad\times\bsK\Big(\bm{v}_k+\frac{1}{\beta_k}\bsg_{k}^0+\eta_k\beta_k\bsL\bm{x}_k
+\frac{1}{\beta_k}\Big(\bsg_{k+1}^0-\bsg_{k}^0\Big)\Big)\Big]\nonumber\\
&=\bm{x}_k^\top(\bsK-\eta_k(\alpha_k+\eta_k\beta^2_k)\bsL)\Big(\bm{v}_k
+\frac{1}{\beta_k}\bsg_{k}^0\Big)\nonumber\\
&\quad+\|\bm{x}_k\|^2_{\eta_k\beta_k(\bsL-\eta_k\alpha_k\bsL^2)}\nonumber\\
&\quad+\frac{1}{\beta_k}\bm{x}_k^\top(\bsK-\eta_k\alpha_k\bsL)
\mathbf{E}_{\mathfrak{F}_k}[\bsg_{k+1}^0-\bsg_{k}^0]\nonumber\\
&\quad-\eta_k\beta_k\Big\|\bm{v}_k+\frac{1}{\beta_k}\bsg_{k}^0\Big\|^2_{\bsK}\nonumber\\
&\quad-\eta_k\Big(\bm{v}_k+\frac{1}{\beta_k}\bsg_{k}^0\Big)^\top\bsK
\mathbf{E}_{\mathfrak{F}_k}[\bsg_{k+1}^0-\bsg_{k}^0]\nonumber\\
&\quad-\eta_k(\bsg_k-\bsg_k^0)^\top
\bsK\Big(\bm{v}_k+\frac{1}{\beta_k}\bsg_{k}^0+\eta_k\beta_k\bsL\bm{x}_k\Big)\nonumber\\
&\quad-\frac{1}{\beta_k}\mathbf{E}_{\mathfrak{F}_k}[\eta_k(\bsg^u_k-\bsg_k^0)^\top
\bsK(\bsg_{k+1}^0-\bsg_{k}^0)]\nonumber\\
&\le\bm{x}_k^\top(\bsK-\eta_k\alpha_k\bsL)\Big(\bm{v}_k+\frac{1}{\beta_k}\bsg_{k}^0\Big)
+\frac{1}{2}\eta^2_k\beta^2_k\|\bsL\bsx_k\|^2\nonumber\\
&\quad+\frac{1}{2}\eta^2_k\beta^2_k\Big\|\bm{v}_k+\frac{1}{\beta_k}\bsg_{k}^0\Big\|^2_{\bsK}
+\|\bm{x}_k\|^2_{\eta_k\beta_k(\bsL-\eta_k\alpha_k\bsL^2)}\nonumber\\
&\quad+\frac{1}{2}\eta_k\|\bm{x}_k\|^2_\bsK
+\frac{1}{2\eta_k\beta^2_k}\mathbf{E}_{\mathfrak{F}_k}[\|\bsg_{k+1}^0-\bsg_{k}^0\|^2\nonumber\\
&\quad+\frac{1}{2}\eta^2_k\alpha^2_k\|\bsL\bm{x}_k\|^2
+\frac{1}{2\beta^2_k}\mathbf{E}_{\mathfrak{F}_k}[\|\bsg_{k+1}^0-\bsg_{k}^0\|^2]\nonumber\\
&\quad-\eta_k\beta_k\Big\|\bm{v}_k+\frac{1}{\beta_k}\bsg_{k}^0\Big\|^2_{\bsK}\nonumber\\
&\quad+\frac{1}{2}\eta^2_k\beta_k^2\Big\|\bm{v}_k+\frac{1}{\beta_k}\bsg_{k}^0\Big\|^2_{\bsK}
+\frac{1}{2\beta_k^2}\mathbf{E}_{\mathfrak{F}_k}[\|\bsg_{k+1}^0-\bsg_{k}^0\|^2]\nonumber\\
&\quad+\frac{1}{2}\eta_k\|\bsg_k-\bsg_k^0\|^2
+\frac{1}{2}\eta_k\Big\|\bm{v}_k+\frac{1}{\beta_k}\bsg_{k}^0\Big\|^2_{\bsK}\nonumber\\
&\quad+\frac{1}{2}\eta^2_k\|\bsg_k-\bsg_k^0\|^2
+\frac{1}{2}\eta^2_k\beta^2_k\|\bsL\bm{x}_k\|^2\nonumber\\
&\quad+\frac{1}{2}\eta^2_k\mathbf{E}_{\mathfrak{F}_k}[\|\bsg^u_k-\bsg_k^0\|^2]
+\frac{1}{2\beta^2_k}\mathbf{E}_{\mathfrak{F}_k}[\|\bsg_{k+1}^0-\bsg_{k}^0\|^2]\nonumber\\
&=\bm{x}_k^\top(\bsK-\eta_k\alpha_k\bsL)\Big(\bm{v}_k+\frac{1}{\beta_k}\bsg_{k}^0\Big)\nonumber\\
&\quad+\frac{1}{2}(\eta_k+\eta^2_k)\|\bsg_k-\bsg_k^0\|^2
+\frac{1}{2}\eta^2_k\mathbf{E}_{\mathfrak{F}_k}[\|\bsg^u_k-\bsg_k^0\|^2]\nonumber\\
&\quad+\|\bm{x}_k\|^2_{\eta_k(\beta_k\bsL+\frac{1}{2}\bsK)
+\eta^2_k(\frac{1}{2}\alpha^2_k-\alpha_k\beta_k+\beta^2_k)\bsL^2}\nonumber\\
&\quad+\Big(\frac{1}{2\eta_k\beta^2_k}+\frac{3}{2\beta^2_k}\Big)
\mathbf{E}_{\mathfrak{F}_k}[\|\bsg_{k+1}^0-\bsg_{k}^0\|^2]\nonumber\\
&\quad-\Big\|\bm{v}_k+\frac{1}{\beta_k}\bsg_{k}^0\Big\|^2_{\eta_k(\beta_k-\frac{1}{2}
-\eta_k\beta^2_k)\bsK}\nonumber\\
&\le\bm{x}_k^\top\bsK\Big(\bm{v}_k+\frac{1}{\beta_k}\bsg_{k}^0\Big)
-(1+\omega_k)\eta_k\alpha_k\bm{x}_k^\top\bsL\Big(\bm{v}_k+\frac{1}{\beta_k}\bsg_{k}^0\Big)\nonumber\\
&\quad+\omega_k\eta_k\alpha_k\bm{x}_k^\top\bsL\Big(\bm{v}_k+\frac{1}{\beta_k}\bsg_{k}^0\Big)\nonumber\\
&\quad+\|\bm{x}_k\|^2_{\eta_k(\beta_k\bsL+\frac{1}{2}\bsK)
+\eta^2_k(\frac{1}{2}\alpha^2_k-\alpha_k\beta_k+\beta^2_k)\bsL^2
+\frac{1}{2}\eta_k(1+3\eta_k)L_f^2\bsK}\nonumber\\
&\quad+\frac{\eta_k}{2\beta^2_k}(1+3\eta_k)L_f^2\mathbf{E}_{\mathfrak{F}_k}[\|\bar{\bsg}^u_{k}\|^2]
+n\sigma^2\eta^2_k\nonumber\\
&\quad-\Big\|\bm{v}_k+\frac{1}{\beta_k}\bsg_{k}^0\Big\|^2_{\eta_k(\beta_k-\frac{1}{2}
-\eta_k\beta^2_k)\bsK}
,\label{sguT:v3k2}
\end{align}
where the first equality holds due to \eqref{sgu:kia-algo-dc-compact}; the second equality holds since \eqref{nonconvex:KL-L-eq}  in Lemma~\ref{nonconvex:lemma-Xinlei}, $\bsx_{k}$ and $\bsv_{k}$ are independent of $\mathfrak{F}_k$, and \eqref{sguT:sgproof-hg1}; the first inequality holds due to the Cauchy--Schwarz inequality, \eqref{nonconvex:KL-L-eq}, $\rho(\bsK)=1$, and the Jensen's inequality; and the last  inequality holds due to \eqref{sguT:gg1}, \eqref{sguT:sgproof-hg5}, and \eqref{sguT:gg}.
For the third term in the right-hand side of \eqref{sguT:v3k2}, we have
\begin{align}\label{sguT:v3k3}
\omega_k\eta_k\alpha_k\bm{x}_k^\top\bsL\Big(\bm{v}_k+\frac{1}{\beta_k}\bsg_{k}^0\Big)
&=\omega_k\eta_k\alpha_k\bsx^\top_k\bsL\bsK\Big(\bm{v}_k+\frac{1}{\beta_k}\bsg_k^0\Big)\nonumber\\
&\le\|\bsx_k\|^2_{\frac{1}{2}\omega_k\eta_k\alpha_k\bsL^2}
+\Big\|\bm{v}_k+\frac{1}{\beta_k}\bsg_k^0\Big\|^2_{\frac{1}{2}\omega_k\eta_k\alpha_k\bsK}.
\end{align}

Then, from \eqref{sguT:v3k1}--\eqref{sguT:v3k3}, we have \eqref{sguT:v3k}.
\end{proof}

\begin{lemma}
Suppose Assumptions~\ref{sgd:assoptset}--\ref{sgd:ass:stochastic-grad:mean} hold. Then the following holds for Algorithm~\ref{sguT:algorithm-sg}
\begin{align}
\mathbf{E}_{\mathfrak{F}_k}[W_{4,k+1}]
&\le  W_{4,k}-\frac{\eta_k}{4}\|\bar{\bsg}_{k}\|^2
+\|\bsx_k\|^2_{\frac{\eta_k}{2}L_f^2\bsK}
-\frac{\eta_k}{4}\|\bar{\bsg}_{k}^0\|^2
+\frac{1}{2}\eta^2_kL_f\mathbf{E}_{\mathfrak{F}_k}[\|\bar{\bsg}^u_{k}\|^2],\label{sguT:v4k}
\end{align}
where $W_{4,k}=n(f(\bar{x}_k)-f^*)=\tilde{f}(\bar{\bsx}_k)-\tilde{f}^*$.
\end{lemma}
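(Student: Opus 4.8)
The plan is to read off \eqref{sguT:v4k} as the standard one–step descent estimate for the averaged iterate $\bar{\bsx}_k$, corrected for the stochastic noise and for the disagreement $\bsx_k-\bar{\bsx}_k$. First I would observe that the lifted function $\tilde f(\bsx)=\sum_{i=1}^n f_i(x_i)$ inherits the smoothness modulus $L_f$ of Assumption~\ref{sgd:assfiu}, since $\|\nabla\tilde f(\bsx)-\nabla\tilde f(\bsy)\|^2=\sum_i\|\nabla f_i(x_i)-\nabla f_i(y_i)\|^2\le L_f^2\|\bsx-\bsy\|^2$, so that the descent inequality \eqref{nonconvex:lemma:lipschitz} applies to $\tilde f$. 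Recalling the averaged dynamics $\bar{\bsx}_{k+1}=\bar{\bsx}_k-\eta_k\bar{\bsg}^u_k$ from \eqref{sguT:xbardynamic-sg} (which uses only the algorithm update, the zero column sums of $L$, and $\sum_i v_{i,0}={\bm 0}_p$, hence needs no connectivity) together with $\nabla\tilde f(\bar{\bsx}_k)=\bsg^0_k$, applying \eqref{nonconvex:lemma:lipschitz} with $x=\bar{\bsx}_k$, $y=\bar{\bsx}_{k+1}$ gives
\[
\tilde f(\bar{\bsx}_{k+1})\le\tilde f(\bar{\bsx}_k)-\eta_k(\bar{\bsg}^u_k)^\top\bsg^0_k+\tfrac12\eta_k^2 L_f\|\bar{\bsg}^u_k\|^2 .
\]

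Next I would take $\mathbf{E}_{\mathfrak{F}_k}[\cdot]$ and subtract the constant $\tilde f^*$. The key bookkeeping is that $\bsx_k,\bar{\bsx}_k,\bsg_k,\bsg^0_k$ depend only on $\mathcal F_{k-1}$ and are independent of $\mathfrak{F}_k$, so they act as constants, while $\bsg^u_k$ carries the only randomness in $\mathfrak{F}_k$; by the unbiasedness identity \eqref{sguT:sgproof-hg1}, $\mathbf{E}_{\mathfrak{F}_k}[\bar{\bsg}^u_k]=\bsH\bsg_k=\bar{\bsg}_k$, so the cross term becomes $-\eta_k\bar{\bsg}_k^\top\bsg^0_k$. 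Since $\bsH$ is an orthogonal projection ($\bsH^\top=\bsH=\bsH^2$), $\bar{\bsg}_k^\top\bsg^0_k=(\bsH\bsg_k)^\top(\bsH\bsg^0_k)=\bar{\bsg}_k^\top\bar{\bsg}^0_k$, and the polarization identity gives $-\eta_k\bar{\bsg}_k^\top\bar{\bsg}^0_k=-\tfrac{\eta_k}{2}\|\bar{\bsg}_k\|^2-\tfrac{\eta_k}{2}\|\bar{\bsg}^0_k\|^2+\tfrac{\eta_k}{2}\|\bar{\bsg}_k-\bar{\bsg}^0_k\|^2$. I would then bound $\|\bar{\bsg}_k-\bar{\bsg}^0_k\|^2=\|\bsH(\bsg_k-\bsg^0_k)\|^2\le\|\bsg_k-\bsg^0_k\|^2\le L_f^2\|\bsx_k\|_{\bsK}^2$ using $\rho(\bsH)=1$ and \eqref{sguT:gg1}, and finally weaken $-\tfrac{\eta_k}{2}\|\cdot\|^2$ to $-\tfrac{\eta_k}{4}\|\cdot\|^2$ (keeping the remaining half as slack for the later Lyapunov combination). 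Collecting the terms yields exactly \eqref{sguT:v4k}.

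I do not expect a genuine obstacle: this is the textbook ``descent along the averaged trajectory'' step, and every deviation is absorbed by a single application of the Cauchy--Schwarz/Young inequality. The three places deserving care are (i) verifying that $\tilde f$ has the same Lipschitz-gradient constant $L_f$ as the $f_i$, so that \eqref{nonconvex:lemma:lipschitz} is invoked on the correct function; (ii) the conditioning, i.e.\ that under $\mathbf{E}_{\mathfrak{F}_k}$ only $\bsg^u_k$ is random; and (iii) the projection identity $\bar{\bsg}_k^\top\bsg^0_k=\bar{\bsg}_k^\top\bar{\bsg}^0_k$, which is precisely what lets the cross term split into the two negative squared-norm contributions of \eqref{sguT:v4k} rather than an indefinite remainder.
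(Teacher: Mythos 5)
Your proposal is correct and follows essentially the same route as the paper's proof: the descent lemma for $\tilde f$ along the averaged dynamics \eqref{sguT:xbardynamic-sg}, conditioning on $\mathfrak{F}_k$ to replace $\bar{\bsg}^u_k$ by $\bar{\bsg}_k$ in the cross term, the projection identity $\bar{\bsg}_k^\top\bsg^0_k=\bar{\bsg}_k^\top\bar{\bsg}^0_k$, and the bound $\|\bar{\bsg}_k-\bar{\bsg}^0_k\|^2\le L_f^2\|\bsx_k\|^2_{\bsK}$. The only cosmetic difference is that you treat the cross term via the exact polarization identity and then discard half of each negative square, whereas the paper splits it into two pieces and applies Young's inequality to each; both yield the identical intermediate bound $-\frac{\eta_k}{4}\|\bar{\bsg}_k\|^2-\frac{\eta_k}{4}\|\bar{\bsg}^0_k\|^2+\frac{\eta_k}{2}\|\bar{\bsg}_k-\bar{\bsg}^0_k\|^2$.
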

\begin{proof}
We first note that $W_{4,k}$ is well defined due to $f^*>-\infty$ as assumed in Assumption~\ref{sgd:assoptset}.

From \eqref{sguT:gg1} and $\rho(\bsH)=1$, we have that
\begin{align}
&\|\bar{\bsg}^0_k-\bar{\bsg}_k\|^2=\|\bsH(\bsg^0_{k}-\bsg_{k})\|^2
\le\|\bsg^0_{k}-\bsg_{k}\|^2\le L_f^2\|\bsx_k\|^2_{\bsK}.\label{sguT:gg2}
\end{align}

From \eqref{sguT:sgproof-hg1}, we have
\begin{align}
&\mathbf{E}_{\mathfrak{F}_k}[\bar{\bsg}^u_k]=\mathbf{E}_{\mathfrak{F}_k}[\bsH\bsg^u_k]
=\bsH\mathbf{E}_{\mathfrak{F}_k}[\bsg^u_k]=\bar{\bsg}_k.\label{sguT:sgproof-hg3}
\end{align}

We have
\begin{align}
\mathbf{E}_{\mathfrak{F}_k}[W_{4,k+1}]
&=\mathbf{E}_{\mathfrak{F}_k}[\tilde{f}(\bar{\bsx}_{k+1})-nf^*]\nonumber\\
&=\mathbf{E}_{\mathfrak{F}_k}[\tilde{f}(\bar{\bsx}_k)-nf^*+
\tilde{f}(\bar{\bsx}_{k+1})-\tilde{f}(\bar{\bsx}_k)]\nonumber\\
&\le\mathbf{E}_{\mathfrak{F}_k}[\tilde{f}(\bar{\bsx}_k)-nf^*
-\eta_k(\bar{\bsg}_{k}^u)^\top\bsg^0_k
+\frac{1}{2}\eta^2_kL_f\|\bar{\bsg}^u_{k}\|^2]\nonumber\\
&=\tilde{f}(\bar{\bsx}_k)-nf^*
-\eta_k\bar{\bsg}_{k}^\top\bsg^0_k
+\frac{1}{2}\eta^2_kL_f\mathbf{E}_{\mathfrak{F}_k}[\|\bar{\bsg}^u_{k}\|^2]\nonumber\\
&=\tilde{f}(\bar{\bsx}_k)-nf^*
-\eta_k\bar{\bsg}_{k}^\top\bar{\bsg}^0_k
+\frac{1}{2}\eta^2_kL_f\mathbf{E}_{\mathfrak{F}_k}[\|\bar{\bsg}^u_{k}\|^2]\nonumber\\
&=W_{4,k}
-\frac{\eta_k}{2}\bar{\bsg}_{k}^\top(\bar{\bsg}_k+\bar{\bsg}^0_k-\bar{\bsg}_k)
-\frac{\eta_k}{2}(\bar{\bsg}_{k}-\bar{\bsg}^0_k+\bar{\bsg}^0_k)^\top\bar{\bsg}^0_k
+\frac{1}{2}\eta^2_kL_f\mathbf{E}_{\mathfrak{F}_k}[\|\bar{\bsg}^u_{k}\|^2]\nonumber\\
&\le W_{4,k}-\frac{\eta_k}{4}\|\bar{\bsg}_{k}\|^2
+\frac{\eta_k}{4}\|\bar{\bsg}^0_k-\bar{\bsg}_k\|^2
-\frac{\eta_k}{4}\|\bar{\bsg}_{k}^0\|^2
+\frac{\eta_k}{4}\|\bar{\bsg}^0_k-\bar{\bsg}_k\|^2
+\frac{1}{2}\eta^2_kL_f\mathbf{E}_{\mathfrak{F}_k}[\|\bar{\bsg}^u_{k}\|^2]\nonumber\\
&=W_{4,k}-\frac{\eta_k}{4}\|\bar{\bsg}_{k}\|^2
+\frac{\eta_k}{2}\|\bar{\bsg}^0_k-\bar{\bsg}_k\|^2
-\frac{\eta_k}{4}\|\bar{\bsg}_{k}^0\|^2
+\frac{1}{2}\eta^2_kL_f\mathbf{E}_{\mathfrak{F}_k}[\|\bar{\bsg}^u_{k}\|^2],
\label{sguT:v4k-1}
\end{align}
where the first inequality holds since that $\tilde{f}$ is smooth, \eqref{nonconvex:lemma:lipschitz} and \eqref{sguT:xbardynamic-sg}; the third equality holds since $\bsx_{k}$ and $\bsv_{k}$ are independent of $\mathfrak{F}_k$ and \eqref{sguT:sgproof-hg3}; the fourth equality holds due to $\bar{\bsg}_{k}^\top\bsg^0_k=\bsg_{k}^\top\bsH\bsg^0_k=\bsg_{k}^\top\bsH\bsH\bsg^0_k
=\bar{\bsg}_{k}^\top\bar{\bsg}^0_k$; and the second inequality holds due to the Cauchy--Schwarz inequality.

Then, from \eqref{sguT:gg2} and \eqref{sguT:v4k-1}, we have \eqref{sguT:v4k}.
\end{proof}

\subsection{Proof of Theorem~\ref{sguT:thm-sg-smT}}\label{sguT:proof-thm-sg-smT}
We denote the following notations.
\begin{align*}
c_0(\kappa_1,\kappa_2)&=\max\{4\kappa_2\varepsilon_5,~\varepsilon_{6}\},\\
c_1&=\frac{1}{\rho_2(L)}+1,\\
c_2(\kappa_1)&=\min\Big\{\frac{\varepsilon_1}{\varepsilon_2},~\frac{1}{5}\Big\},\\
\kappa_3&=\frac{1}{\rho_2(L)}+\kappa_1+1,\\
\kappa_4&=\frac{1}{\rho_2(L)}+\kappa_1+\frac{3}{2},\\
\kappa_5&=\frac{\kappa_1+1}{2}+\frac{1}{2\rho_2(L)},\\
\kappa_6&=\min\Big\{\frac{1}{2\rho(L)},~\frac{\kappa_1-1}{2\kappa_1}\Big\},\\
\varepsilon_1&=(\kappa_1-1)\rho_2(L)-1,\\
\varepsilon_2&=\rho(L)+(2\kappa_1^2+1)\rho(L^2)+1,\\
\varepsilon_3&=\varepsilon_1\kappa_2-\varepsilon_2\kappa_2^2,\\
\varepsilon_4&=\frac{1}{2}(\kappa_2-5\kappa_2^2),\\
\varepsilon_5&=L_f+\frac{1}{\kappa_2\varepsilon_{6}}\kappa_3L_f^2
+\frac{2}{\varepsilon_{6}^2}\kappa_4L_f^2,\\
\varepsilon_6&=\max\Big\{\frac{1}{2}(2+3L_f^2),~\kappa_3\Big\}.
\end{align*}

To prove Theorem~\ref{sguT:thm-sg-smT}, we  need the following lemma.


\begin{lemma}\label{sguT:lemma:sg2}
Suppose Assumptions~\ref{sgd:assgraph}--\ref{sgd:ass:stochastic-grad:variance} hold. Suppose $\alpha_k=\alpha=\kappa_1\beta$, $\beta_k=\beta\ge c_0(\kappa_1,\kappa_2)$, and $\eta_k=\eta=\kappa_2/\beta$, where $\kappa_1>c_1$ and $\kappa_2\in(0,c_2(\kappa_1))$. Then, for any $k\in\mathbb{N}_0$ the following holds for Algorithm~\ref{sguT:algorithm-sg}
\begin{subequations}
\begin{align}
\mathbf{E}_{\mathfrak{F}_k}[W_{k+1}]
& \le W_{k}-\|\bsx_k\|^2_{\varepsilon_3\bsK}
-\Big\|\bm{v}_k+\frac{1}{\beta}\bsg_{k}^0\Big\|^2_{\varepsilon_4\bsK}
-\frac{1}{4}\eta\|\bar{\bsg}^0_{k}\|^2
+(\varepsilon_5+3n)\sigma^2\eta^2,\label{sguT:vkLya2}\\
\mathbf{E}_{\mathfrak{F}_k}[\breve{W}_{k+1}]
& \le \breve{W}_{k}-\|\bsx_k\|^2_{\varepsilon_3\bsK}
-\Big\|\bm{v}_k+\frac{1}{\beta}\bsg_{k}^0\Big\|^2_{\varepsilon_4\bsK}
+2\varepsilon_5\eta^2\|\bar{\bsg}_{k}^0\|^2\nonumber\\
&\quad+2L_f^2\varepsilon_5\eta^2\|\bsx_k\|^2_{\bsK}
+(\varepsilon_5+3n)\sigma^2\eta^2,\label{sguT:vkLya2-1-3-speed}\\
\mathbf{E}_{\mathfrak{F}_k}[W_{4,k+1}]
&\le W_{4,k}-\frac{1}{4}\eta\|\bar{\bsg}_{k}^0\|^2+\|\bsx_k\|^2_{\frac{1}{2}\eta L_f^2\bsK}+L_f\sigma^2\eta^2,\label{sguT:v4k-speed}
\end{align}
\end{subequations}
where $W_{k}=\sum_{i=1}^{4}W_{i,k}$ and $\breve{W}_k=\sum_{i=1}^{3}W_{i,k}$.
\end{lemma}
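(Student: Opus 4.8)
The plan is to derive all three estimates by specializing and summing the four component bounds \eqref{sguT:v1k}, \eqref{sguT:v2k}, \eqref{sguT:v3k}, \eqref{sguT:v4k}. First, since $\beta_k\equiv\beta$ is constant we have $\omega_k=\tfrac1{\beta_k}-\tfrac1{\beta_{k+1}}=0$, so every term carrying a factor $\omega_k$ drops -- in particular the $\|\bsg_{k+1}^0\|^2$ terms in \eqref{sguT:v2k} and \eqref{sguT:v3k} and the recursive $\tfrac12\omega_k\mathbf{E}_{\mathfrak F_k}[2W_{1,k+1}]$ term in \eqref{sguT:v3k} all vanish, collapsing \eqref{sguT:v2k} and \eqref{sguT:v3k} to clean bounds. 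I would also record the scalar identities implied by $\alpha=\kappa_1\beta$, $\eta=\kappa_2/\beta$: namely $\eta\beta=\kappa_2$, $\eta\alpha=\kappa_1\kappa_2$, $\eta^2\beta^2=\kappa_2^2$, $\eta^2\alpha\beta=\kappa_1\kappa_2^2$, $\eta^2\alpha^2=\kappa_1^2\kappa_2^2$, which turn every coefficient into an expression in $\kappa_1,\kappa_2$ that is either $\mathcal{O}(1)$ or $\mathcal{O}(1/\beta)$.

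The structural heart of the argument is the exact cancellation of the ``cross'' terms. Summing the $\bsx_k^\top\bsK(\bsv_k+\tfrac1\beta\bsg_k^0)$ contributions of \eqref{sguT:v1k} and \eqref{sguT:v2k} gives $(-\kappa_2+\kappa_2)\bsx_k^\top\bsK(\cdots)=0$, and summing the $\bsx_k^\top\bsL(\bsv_k+\tfrac1\beta\bsg_k^0)$ contributions of \eqref{sguT:v2k} (coefficient $\kappa_1\eta\beta=\kappa_1\kappa_2$) and \eqref{sguT:v3k} (coefficient $-\eta\alpha=-\kappa_1\kappa_2$) again gives $0$; this is precisely where the weighting $\bsQ+\kappa_1\bsK$ in $W_{2,k},W_{3,k}$ and the coupling $\alpha=\kappa_1\beta$ are exploited. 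After this cancellation the sum satisfies a bound of the form $\mathbf{E}_{\mathfrak F_k}[W_{k+1}]\le W_k-\|\bsx_k\|_A^2-\|\bsv_k+\tfrac1\beta\bsg_k^0\|_B^2-\tfrac\eta4\|\bar\bsg_k^0\|^2+(\text{gradient terms})$, with $A$ a combination of $\bsL,\bsL^2,\bsK$ and $B$ a multiple of $\bsK$. The gradient terms I would dispatch using the unbiasedness and bounded-variance facts \eqref{sguT:sgproof-hg1}--\eqref{sguT:sgproof-hg2}, which give $\mathbf{E}_{\mathfrak F_k}[\|\bar\bsg_k^u\|^2]\le\|\bar\bsg_k\|^2+\sigma^2$. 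For \eqref{sguT:vkLya2} the resulting $\|\bar\bsg_k\|^2$ is absorbed into the $-\tfrac\eta4\|\bar\bsg_k\|^2$ supplied by \eqref{sguT:v4k}, which forces $\eta\le1/(4\varepsilon_5)$, i.e. $\beta\ge4\kappa_2\varepsilon_5$; for \eqref{sguT:vkLya2-1-3-speed}, where no $W_4$ term is present, I instead bound $\|\bar\bsg_k^u\|^2\le2\|\bar\bsg_k^0\|^2+2L_f^2\|\bsx_k\|_{\bsK}^2+\sigma^2$ via \eqref{sguT:gg2}, which is exactly what produces the extra $+2\varepsilon_5\eta^2\|\bar\bsg_k^0\|^2+2L_f^2\varepsilon_5\eta^2\|\bsx_k\|_{\bsK}^2$ on the right-hand side.

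It then remains to convert $\|\bsx_k\|_A^2$ into a multiple of $\|\bsx_k\|_{\bsK}^2$ using Lemma~\ref{nonconvex:lemma-Xinlei} ($\rho_2(L)\bsK\le\bsL\le\rho(L)\bsK$, $\bsL^2\le\rho(L^2)\bsK$): the combined $\bsL$-coefficient $-\kappa_1\kappa_2+\eta\beta+\eta^2\beta^2=-\kappa_2(\kappa_1-1-\kappa_2)$ is negative (since $\kappa_2<\varepsilon_1/\varepsilon_2\le\kappa_1-1$) and is bounded above by $-\kappa_2(\kappa_1-1)\rho_2(L)\bsK+\kappa_2^2\rho(L)\bsK$, the combined $\bsL^2$-coefficient reduces to $\kappa_2^2(2\kappa_1^2+1)$ and is bounded by $\kappa_2^2(2\kappa_1^2+1)\rho(L^2)\bsK$, and the residual $\mathcal{O}(1/\beta)$ $\bsK$-terms $\eta+(\tfrac32\eta+4\eta^2)L_f^2$ are $\le\kappa_2+\kappa_2^2$ once $\beta\ge\varepsilon_6$; matching against the definitions of $\varepsilon_1,\varepsilon_2,\varepsilon_3$ then shows the net coefficient is $\le-\varepsilon_3$, which is positive precisely because $\kappa_1>c_1$ makes $\varepsilon_1>0$ and $\kappa_2<c_2(\kappa_1)\le\varepsilon_1/\varepsilon_2$ makes $\varepsilon_3=\kappa_2(\varepsilon_1-\varepsilon_2\kappa_2)>0$. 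The same bookkeeping on the coefficient of $\|\bsv_k+\tfrac1\beta\bsg_k^0\|_{\bsK}^2$ (contributions $\tfrac32\eta^2\beta^2$, $\tfrac12\eta(\tfrac1{\rho_2(L)}+\kappa_1)$, and $-\eta(\beta-\tfrac12-\eta\beta^2)$) collapses to $-\kappa_2+\tfrac52\kappa_2^2+\mathcal{O}(1/\beta)\le-\tfrac12(\kappa_2-5\kappa_2^2)=-\varepsilon_4<0$ once $\beta\ge\varepsilon_6\ge\kappa_3$ and $\kappa_2<1/5$, and collecting the $\sigma^2$-terms yields $(\varepsilon_5+3n)\sigma^2\eta^2$. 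Finally, \eqref{sguT:v4k-speed} follows directly from \eqref{sguT:v4k} by substituting $\mathbf{E}_{\mathfrak F_k}[\|\bar\bsg_k^u\|^2]\le\|\bar\bsg_k\|^2+\sigma^2$ and dropping the nonpositive term $-\eta(\tfrac14-\tfrac12\eta L_f)\|\bar\bsg_k\|^2$, legitimate since $\beta\ge\varepsilon_6$ implies $\eta\le1/(2L_f)$. The main obstacle is purely this last stage of bookkeeping: tracking how the numerous $\eta,\eta^2,\eta\alpha,\eta\beta,\eta^2\alpha^2,\eta^2\beta^2$ coefficients across four inequalities recombine, and checking that the definitions of $\varepsilon_1,\dots,\varepsilon_6$ and the thresholds $c_0,c_1,c_2$ are tight enough to leave genuinely negative-definite quadratic forms in $\bsx_k$ and $\bsv_k+\tfrac1\beta\bsg_k^0$.
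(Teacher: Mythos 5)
Your proposal is correct and follows essentially the same route as the paper's proof: sum the four component bounds \eqref{sguT:v1k}--\eqref{sguT:v4k} with $\omega_k=0$, exploit the cancellation of the cross terms forced by $\alpha=\kappa_1\beta$ and the $\bsQ+\kappa_1\bsK$ weighting, and then match coefficients against $\varepsilon_1,\dots,\varepsilon_6$ via the spectral bounds of Lemma~\ref{nonconvex:lemma-Xinlei}, exactly as in the derivation of \eqref{sguT:vkLya-4}--\eqref{sguT:varepsilon13}. The only (harmless) deviation is your use of the exact variance decomposition $\mathbf{E}_{\mathfrak F_k}[\|\bar\bsg_k^u\|^2]\le\|\bar\bsg_k\|^2+\sigma^2$ in place of the paper's Cauchy--Schwarz estimate $2\|\bar\bsg_k\|^2+2\sigma^2$ in \eqref{sguT:vkLya-3}, which only tightens the constants and still yields the stated right-hand sides.
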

\begin{proof}
(i) Noting that $\alpha_k=\alpha=\kappa_1\beta$, $\beta_k=\beta$, $\eta_k=\eta$, and $\omega_k=\frac{1}{\beta_{k}}-\frac{1}{\beta_{k+1}}=0$, from \eqref{sguT:v1k}, \eqref{sguT:v2k}, \eqref{sguT:v3k}, and \eqref{sguT:v4k},  we have
\begin{align}\label{sguT:vkLya-1}
\mathbf{E}_{\mathfrak{F}_k}[W_{k+1}]
&\le W_{k}
+\Big\|\bm{v}_k+\frac{1}{\beta}\bsg_k^0\Big\|^2_{\frac{3}{2}\eta^2\beta^2\bsK}
+2n\sigma^2\eta^2\nonumber\\
&\quad-\|\bsx_k\|^2_{\eta\alpha\bsL-\frac{1}{2}\eta\bsK
-\frac{3}{2}\eta^2\alpha^2\bsL^2-\frac{1}{2}\eta(1+5\eta)L_f^2\bsK}\nonumber\\
&\quad+\|\bsx_k\|^2_{\eta^2\beta^2(\bsL+\kappa_1\bsL^2)}
+\frac{1}{2}\eta\Big(\frac{1}{\rho_2(L)}+\kappa_1\Big)
\Big\|\bm{v}_k+\frac{1}{\beta}\bsg_{k}^0\Big\|^2_{\bsK}\nonumber\\
&\quad+\frac{\eta}{\beta^2}\Big(\eta+\frac{1}{2}\Big)
\Big(\frac{1}{\rho_2(L)}+\kappa_1\Big)L_f^2\mathbf{E}_{\mathfrak{F}_k}[\|\bar{\bsg}^u_k\|^2]\nonumber\\
&\quad+\|\bm{x}_k\|^2_{\eta(\beta\bsL+\frac{1}{2}\bsK)
+\eta^2(\frac{1}{2}\alpha^2-\alpha\beta+\beta^2)\bsL^2
+\frac{1}{2}\eta(1+3\eta)L_f^2\bsK}\nonumber\\
&\quad+\frac{\eta}{2\beta^2}(1+3\eta)L_f^2\mathbf{E}_{\mathfrak{F}_k}[\|\bar{\bsg}^u_{k}\|^2]
+n\sigma^2\eta^2\nonumber\\
&\quad-\Big\|\bm{v}_k+\frac{1}{\beta}\bsg_{k}^0\Big\|^2_{\eta(\beta-\frac{1}{2}-\eta\beta^2)\bsK}
-\frac{1}{4}\eta\|\bar{\bsg}_{k}\|^2\nonumber\\
&\quad+\|\bsx_k\|^2_{\frac{1}{2}\eta L_f^2\bsK}-\frac{1}{4}\eta\|\bar{\bsg}_{k}^0\|^2
+\frac{1}{2}\eta^2L_f\mathbf{E}_{\mathfrak{F}_k}[\|\bar{\bsg}^u_{k}\|^2].
\end{align}

Note that
\begin{align}
\mathbf{E}_{\mathfrak{F}_k}[\|\bar{\bsg}^u_k\|^2]
&=\mathbf{E}_{\mathfrak{F}_k}[\|\bar{\bsg}^u_k-\bar{\bsg}_k+\bar{\bsg}_k\|^2]\nonumber\\
&\le2\mathbf{E}_{\mathfrak{F}_k}[\|\bar{\bsg}^u_k-\bar{\bsg}_k\|^2]+2\|\bar{\bsg}_k\|^2\nonumber\\
&=2n\mathbf{E}_{\mathfrak{F}_k}[\|\frac{1}{n}\sum_{i=1}^{n}(g^u_{i,k}-g_{i,k})\|^2]
+2\|\bar{\bsg}_k\|^2\nonumber\\
&=\frac{2}{n}\mathbf{E}_{\mathfrak{F}_k}[\|\sum_{i=1}^{n}(g^u_{i,k}-g_{i,k})\|^2]
+2\|\bar{\bsg}_k\|^2\nonumber\\
&=\frac{2}{n}\sum_{i=1}^{n}\mathbf{E}_{\mathfrak{F}_k}[\|g^u_{i,k}-g_{i,k}\|^2]
+2\|\bar{\bsg}_k\|^2\nonumber\\
&\le2\sigma^2+2\|\bar{\bsg}_k\|^2,\label{sguT:vkLya-3}
\end{align}
where the first inequality holds due to the Cauchy--Schwarz inequality; the last equality holds since $\{g^u_{i,k},~i\in[n]\}$ are independent of each other as assumed in Assumption~\ref{sgd:ass:stochastic-grad:xi}, $\bsx_{k}$ and $\bsv_{k}$ are independent of $\mathfrak{F}_k$, and $\mathbf{E}_{\mathfrak{F}_k}[g^u_{i,k}]=g_{i,k}$ as assumed in Assumption~\ref{sgd:ass:stochastic-grad:mean}; and the last inequality holds due to \eqref{sguT:sgproof-hg2}.

From \eqref{sguT:vkLya-1}, \eqref{sguT:vkLya-3}, and $\alpha=\kappa_1\beta$, we have
\begin{align}\label{sguT:vkLya-4}
\mathbf{E}_{\mathfrak{F}_k}[W_{k+1}]
&\le W_{k}-\|\bsx_k\|^2_{\eta\bsM_{1}-\eta^2\bsM_{2}}
-\Big\|\bm{v}_k+\frac{1}{\beta}\bsg_{k}^0\Big\|^2_{b_{1,k}\bsK}\nonumber\\
&\quad-b_{2,k}\eta \|\bar{\bsg}_{k}\|^2
-\frac{1}{4}\eta\|\bar{\bsg}^0_{k}\|^2+b_{3,k}\sigma^2\eta^2+3n\sigma^2\eta^2,
\end{align}
where
\begin{align*}
\bsM_{1}&=(\alpha-\beta)\bsL-\frac{1}{2}(2+3L_f^2)\bsK,\\
\bsM_{2}&=\beta^2\bsL+(2\alpha^2+\beta^2)\bsL^2+4L_f^2\bsK,\\
b_{1,k}&=\frac{1}{2}(2\beta-\kappa_3)\eta
-\frac{5}{2}\beta^2\eta^2,\\
b_{2,k}&=\frac{1}{4}-b_{3,k}\eta,\\
b_{3,k}&=L_f+\frac{1}{\beta^2\eta}\kappa_3L_f^2+\frac{2}{\beta^2}\kappa_4L_f^2.
\end{align*}

From \eqref{nonconvex:KL-L-eq2}, $\alpha=\kappa_1\beta$, $\kappa_1>c_1>1$, $\eta=\kappa_2/\beta$, and $\beta\ge c_0(\kappa_1,\kappa_2)\ge\varepsilon_{6}\ge(2+3L_f^2)/2$,  we have
\begin{align}\label{sguT:vkLya-M1}
\eta\bsM_{1}\ge\varepsilon_1\kappa_2\bsK.
\end{align}

From \eqref{nonconvex:KL-L-eq2}, $\alpha=\kappa_1\beta$, and $\beta\ge\frac{1}{2}(2+3L_f^2)>2L_f$,  we have
\begin{align}\label{sguT:vkLya-M2}
\eta^2\bsM_{2}\le\varepsilon_2\kappa_2^2\bsK.
\end{align}

From $\beta\ge\kappa_3$, we have
\begin{align}\label{sguT:vkLya-b1}
b_{1,k}\ge&\varepsilon_4.
\end{align}

From $\kappa_1>c_1=1/\rho_2(L)+1$, we have
\begin{align}
\varepsilon_1>0.\label{sguT:varepsilon1and4}
\end{align}

From \eqref{sguT:varepsilon1and4} and $\kappa_2\in(0,\min\{\frac{\varepsilon_1}{\varepsilon_2},~\frac{1}{5}\})$, we have
\begin{subequations}
\begin{align}
&\varepsilon_3>0,\label{sguT:kappa2-1}\\
&\varepsilon_4>0.\label{sguT:kappa2-2}
\end{align}
\end{subequations}

From \eqref{sguT:kappa2-1}, \eqref{sguT:kappa2-2}, and $\beta\ge4\kappa_2\varepsilon_5$, we have
\begin{subequations}
\begin{align}
&b_{3,k}=L_f+\frac{1}{\beta^2\eta_k}\kappa_3L_f^2+\frac{2}{\beta^2}\kappa_4L_f^2
\le\varepsilon_5,\label{sguT:varepsilon9}\\
&b_{2,k}=\frac{1}{4}-b_{3,k}\eta\ge\frac{1}{4}-\frac{\kappa_2}{\beta}\varepsilon_5\ge0.
\label{sguT:varepsilon13}
\end{align}
\end{subequations}

From \eqref{sguT:vkLya-4}--\eqref{sguT:vkLya-b1}, \eqref{sguT:varepsilon9}, and \eqref{sguT:varepsilon13}, we have \eqref{sguT:vkLya2}.

(ii) Similar to the way to get \eqref{sguT:vkLya2}, we have
\begin{align}
\mathbf{E}_{\mathfrak{F}_k}[\breve{W}_{k+1}]
 \le \breve{W}_{k}-\|\bsx_k\|^2_{\varepsilon_3\bsK}
-\Big\|\bm{v}_k+\frac{1}{\beta}\bsg_{k}^0\Big\|^2_{\varepsilon_4\bsK}
+\varepsilon_5\eta^2\|\bar{\bsg}_{k}\|^2
+(\varepsilon_5+3n)\sigma^2\eta^2,\label{sguT:vkLya2-1-3-speed-1}
\end{align}

We have
\begin{align}
\|\bar{\bsg}_{k}\|^2
=\|\bar{\bsg}_{k}-\bar{\bsg}_{k}^0+\bar{\bsg}_{k}^0\|^2\le2\|\bar{\bsg}_{k}-\bar{\bsg}_{k}^0\|^2+2\|\bar{\bsg}_{k}^0\|^2
\le2L_f^2\|\bsx_k\|^2_{\bsK}+2\|\bar{\bsg}_{k}^0\|^2,\label{sgu:vkLya-2g0kbar}
\end{align}
where the last inequality holds due to \eqref{sguT:gg2}.

From \eqref{sguT:vkLya2-1-3-speed-1} and \eqref{sgu:vkLya-2g0kbar}, we have \eqref{sguT:vkLya2-1-3-speed}.

(iii) From \eqref{sguT:v4k} and \eqref{sguT:vkLya-3}, we have
\begin{align}
\mathbf{E}_{\mathfrak{F}_k}[W_{4,k+1}]\le W_{4,k}-\frac{1}{4}\eta\|\bar{\bsg}_{k}\|^2
+\|\bsx_k\|^2_{\frac{1}{2}\eta L_f^2\bsK}-\frac{1}{4}\eta\|\bar{\bsg}_{k}^0\|^2
+\eta^2L_f(\sigma^2+\|\bar{\bsg}_k\|^2),\label{sguT:v4k-speed0}
\end{align}

From $\eta=\kappa_2/\beta$ and $\beta\ge4\kappa_2\varepsilon_5>4\kappa_2L_f$, we have
\begin{align}\label{sguT:v4k-speed1}
\eta L_f<\frac{1}{4}.
\end{align}

From \eqref{sguT:v4k-speed0} and \eqref{sguT:v4k-speed1}, we have \eqref{sguT:v4k-speed}.
\end{proof}

Now we are ready to prove Theorem~\ref{sguT:thm-sg-smT}.

Denote
\begin{align*}
\hat{V}_k=\|\bm{x}_k\|^2_{\bsK}+\Big\|\bsv_k
+\frac{1}{\beta_k}\bsg_k^0\Big\|^2_{\bsK}+n(f(\bar{x}_k)-f^*).
\end{align*}
We have
\begin{align}
W_{k}
&=\frac{1}{2}\|\bsx_{k}\|^2_{\bsK}
+\frac{1}{2}\Big\|\bsv_k+\frac{1}{\beta}\bsg_k^0\Big\|^2_{\bsQ+\kappa_1\bsK}
+\bsx_k^\top\bsK\Big(\bm{v}_k+\frac{1}{\beta}\bsg_k^0\Big)+n(f(\bar{x}_k)-f^*)\nonumber\\
&\ge\frac{1}{2}\|\bsx_{k}\|^2_{\bsK}
+\frac{1}{2}\Big(\frac{1}{\rho(L)}+\kappa_1\Big)\Big\|\bsv_k+\frac{1}{\beta}
\bsg_k^0\Big\|^2_{\bsK}-\frac{1}{2\kappa_1}\|\bsx_{k}\|^2_{\bsK}
-\frac{\kappa_1}{2}\Big\|\bsv_k+\frac{1}{\beta}\bsg_k^0\Big\|^2_{\bsK}
+n(f(\bar{x}_k)-f^*)\nonumber\\
&\ge\kappa_6\Big(\|\bsx_{k}\|^2_{\bsK}+\Big\|\bsv_k+\frac{1}{\beta}\bsg_k^0\Big\|^2_{\bsK}\Big)
+n(f(\bar{x}_k)-f^*)\label{sguT:vkLya3.2}\\
&\ge\kappa_6\hat{V}_k\ge0,\label{sguT:vkLya3}
\end{align}
where the first inequality holds due to \eqref{nonconvex:lemma-eq2} and the Cauchy--Schwarz inequality; and the last inequality holds due to $0<\kappa_6<0.5$. Similarly, we have
\begin{align}\label{sguT:vkLya3.1}
W_k\le\kappa_5\hat{V}_k.
\end{align}

From \eqref{sguT:vkLya2} and \eqref{sguT:kappa2-2}, we have
\begin{align}\label{sguT:vkLya4}
\mathbf{E}_{\mathfrak{F}_k}[W_{k+1}]\le W_{k}-\varepsilon_3\|\bsx_k\|^2_{\bsK}
-\frac{\kappa_2}{4\beta}\|\bar{\bsg}^0_{k}\|^2
+\frac{(\varepsilon_5+3n)\kappa_2^2\sigma^2}{\beta^2}.
\end{align}
Then, taking expectation in $\calF_{T}$ and summing \eqref{sguT:vkLya4} over $ k\in[0,T]$  yield
\begin{align}\label{sguT:vkLya4.1}
\mathbf{E}[W_{T+1}]+\sum_{k=0}^{T}\mathbf{E}\Big[\varepsilon_3\|\bsx_k\|^2_{\bsK}
+\frac{\kappa_2}{4\beta}\|\bar{\bsg}^0_{k}\|^2\Big]\le W_{0}+\frac{(T+1)(\varepsilon_5+3n)\kappa_2^2\sigma^2}{\beta^2}.
\end{align}

From \eqref{sguT:vkLya4.1}, \eqref{sguT:vkLya3}, and \eqref{sguT:kappa2-1}, we have
\begin{align}\label{sguT:thm-sg-sm-equ1.1p}
\frac{1}{T+1}\sum_{k=0}^{T}\mathbf{E}\Big[\frac{1}{n}\sum_{i=1}^{n}\|x_{i,k}-\bar{x}_k\|^2\Big]
=\frac{1}{n(T+1)}\sum_{k=0}^{T}\mathbf{E}[\|\bsx_k\|^2_{\bsK}]
\le\frac{W_{0}}{n\varepsilon_3(T+1)}
+\frac{(\varepsilon_5+3n)\kappa_2^2\sigma^2}{n\varepsilon_3\beta^2}.
\end{align}
Noting that $W_0=\mathcal{O}(n)$, from \eqref{sguT:thm-sg-sm-equ1.1p}, we have \eqref{sguT:thm-sg-sm-equ3.1}.

Taking expectation in $\calF_{T}$ and summing \eqref{sguT:v4k-speed} over $ k\in[0,T]$  yield
\begin{align}\label{sguT:vkLya4.1-speed}
&\frac{1}{4}n\sum_{k=0}^{T}\mathbf{E}[\|\nabla f(\bar{x}_k)\|^2]=\frac{1}{4}\sum_{k=0}^{T}\mathbf{E}[\|\bar{\bsg}^0_{k}\|^2]
\le \frac{W_{4,0}}{\eta}+\frac{1}{2}L_f^2\sum_{k=0}^{T}\mathbf{E}[\|\bsx_k\|^2_{\bsK}]
+(T+1)L_f\sigma^2\eta.
\end{align}

From \eqref{sguT:vkLya4.1-speed}, $\eta=\kappa_2/\beta=\sqrt{n}/\sqrt{T}$, and \eqref{sguT:thm-sg-sm-equ1.1p}, we have
\begin{align*}
&\frac{1}{T}\sum_{k=0}^{T-1}\mathbf{E}[\|\nabla f(\bar{x}_k)\|^2]
\le\frac{4\beta}{\kappa_2T}(f(\bar{x}_0)-f^*)
+\frac{4L_f\sigma^2\kappa_2}{n\beta}
+\mathcal{O}(\frac{1}{T})+\mathcal{O}(\frac{1}{\beta^2}),
\end{align*}
which gives \eqref{sguT:thm-sg-sm-equ3}.

Taking expectation in $\calF_{T}$ and summing \eqref{sguT:v4k-speed} over $ k\in[0,T]$  yield
\begin{align}\label{sguT:thm-sg-sm-equ2p}
&n(\mathbf{E}[f(\bar{x}_{T+1})]-f^*)=\mathbf{E}[W_{4,T+1}]
\le W_{4,0}+\frac{1}{2}\eta L_f^2\sum_{k=0}^{T}\mathbf{E}[\|\bsx_k\|^2_{\bsK}]
+L_f\sigma^2\eta^2(T+1).
\end{align}

From \eqref{sguT:vkLya4.1}, \eqref{sguT:thm-sg-sm-equ2p}, and $\eta=\kappa_2/\beta$, we have \eqref{sguT:thm-sg-sm-equ4}.

\subsection{Proof of Theorem~\ref{sguT:thm-sg-diminishingT}}\label{sguT:proof-thm-sg-diminishingT}
In addition to the notations defined in Appendix~\ref{sguT:proof-thm-sg-smT}, we also denote
\begin{align*}
\varepsilon_7&=\frac{1}{\kappa_5}\min\Big\{\varepsilon_3,~\varepsilon_4,
~\frac{\nu}{2(T+1)^\theta}\Big\}.
\end{align*}

From the conditions in Theorem~\ref{sguT:thm-sg-diminishingT}, we know that all conditions needed in Lemma~\ref{sguT:lemma:sg2} are satisfied, so \eqref{sguT:vkLya2}--\eqref{sguT:v4k-speed} still hold.

From Assumptions~\ref{sgd:assoptset} and \ref{sgd:assfil} as well as \eqref{nonconvex:equ:plc}, we have that
\begin{align}\label{nonconvex:gg3}
\|\bar{\bsg}^0_k\|^2=n\|\nabla f(\bar{x}_k)\|^2\ge2\nu n(f(\bar{x}_k)-f^*)=2\nu W_{4,k}.
\end{align}

From \eqref{sguT:vkLya3}, we have
\begin{align}\label{sguT:vkLya5}
\|\bsx_{k}\|^2_{\bsK}+W_{4,k}
\le\hat{V}_k\le\frac{W_{k}}{\kappa_6}.
\end{align}

From \eqref{sguT:vkLya2}, \eqref{nonconvex:gg3}, \eqref{sguT:vkLya3}, \eqref{sguT:vkLya3.1}, and \eqref{sguT:step:eta1T}, we have
\begin{align}\label{sguT:vkLya2-plT-constant}
\mathbf{E}_{\mathfrak{F}_k}[W_{k+1}]
&\le W_{k}-\varepsilon_3\|\bsx_k\|^2_{\bsK}
-\varepsilon_4\Big\|\bm{v}_k+\frac{1}{\beta_k}\bsg_{k}^0\Big\|^2_{\bsK}
-\frac{1}{2}\eta\nu W_{4,k}
+(\varepsilon_5+3n)\sigma^2\eta^2\nonumber\\
&\le W_{k}-\frac{1}{\kappa_5}\min\Big\{\varepsilon_3,~\varepsilon_4,
~\frac{\nu\eta}{2}\Big\}W_{k}
+(\varepsilon_5+3n)\sigma^2\eta^2.
\end{align}

From \eqref{sguT:vkLya2-plT-constant} and \eqref{sguT:step:eta1T}, we have
\begin{align}\label{sguT:vkLya2-plT}
\mathbf{E}_{\mathfrak{F}_k}[W_{k+1}]
\le W_{k}-\varepsilon_7W_{k}
+\frac{(\varepsilon_5+3n)\sigma^2}{(T+1)^{2\theta}},~\forall k\le T.
\end{align}

From $\kappa_1>1$, we have $\kappa_5>1$. From $0<\kappa_2<1/5$, we have $\varepsilon_4=(\kappa_2-5\kappa_2^2)/2\le1/40$.
Thus,
\begin{align}\label{sguT:vkLya2-pl-r1.1T}
0<\varepsilon_7\le\frac{\varepsilon_4}{\kappa_5}\le\frac{1}{40}.
\end{align}

Then, from \eqref{sguT:vkLya2-plT}, \eqref{sguT:vkLya3}, and \eqref{sguT:vkLya2-pl-r1.1T}, we have
\begin{align}\label{sguT:vkLya2-plT1}
\mathbf{E}[W_{k+1}]
&\le (1-\varepsilon_7)^{k+1}W_{0}
+\frac{(\varepsilon_5+3n)\sigma^2}{(T+1)^{2\theta}}
\sum_{l=0}^{k}(1-\varepsilon_7)^l\nonumber\\
&\le (1-\varepsilon_7)^{k+1}W_{0}
+\frac{(\varepsilon_5+3n)\sigma^2}
{\varepsilon_7(T+1)^{2\theta}},~\forall k\le T.
\end{align}

Then, noting that $\varepsilon_7=\mathcal{O}(1/(T+1)^{\theta})$ and $\theta\in(0,1)$, from \eqref{sguT:vkLya2-plT1}, \eqref{sgu:lemma:exponential-equ}, and \eqref{sguT:vkLya5}, we have \begin{align}\label{sguT:thm-sg-diminishing-equ1T}
\mathbf{E}[\|\bsx_k\|^2_{\bsK}+W_{4,k}]
=\mathcal{O}(\frac{n}{T^\theta}),~\forall k\le T.
\end{align}
Thus, there exists a constant $c_f>0$ such that
\begin{align}
\mathbf{E}[\|\bsx_k\|^2_{\bsK}+W_{4,k}]\le nc_f,~\forall k\le T.\label{sguT:thm-sg-sm-bounded}
\end{align}

From \eqref{sguT:vkLya3.2} and \eqref{sguT:vkLya3.1}, we have
\begin{align}\label{sgu:vkLya3.2-bound}
0\le2\kappa_6(W_{1,k}+W_{2,k})\le\breve{W}_k\le2\kappa_5(W_{1,k}+W_{2,k}).
\end{align}

From \eqref{nonconvex:lemma:lipschitz2}, we have
\begin{align}
&\|\bar{\bsg}^0_{k}\|^2
=n\|\nabla f(\bar{x}_k)\|^2
\le2L_fn(f(\bar{x}_k)-f^*)=2L_fW_{4,k}.\label{sgu:vkLya-2gbar0k}
\end{align}

Denote $\breve{z}_k=\mathbf{E}[\breve{W}_k]$. From \eqref{sguT:vkLya2-1-3-speed} and  \eqref{sguT:thm-sg-sm-bounded}--\eqref{sgu:vkLya-2gbar0k}, we have
\begin{align}\label{sguT:vkLya4-bound}
\breve{z}_{k+1}\le(1-a_1)\breve{z}_k+a_2\eta^2,~\forall k\le T,
\end{align}
where $a_1=\min\{\varepsilon_{3},~\varepsilon_{4}\}/\kappa_5$ and $
a_2=n(4\varepsilon_{5}L_fc_f+2\varepsilon_{5}L_f^2c_f+(\varepsilon_{5}+3)\sigma^2)$.

From \eqref{sguT:vkLya2-pl-r1.1T}, we have
\begin{align}\label{sguT:vkLya2-a1-bounded}
a_1\le\frac{\varepsilon_4}{\kappa_5}\le\frac{1}{40}.
\end{align}

From \eqref{sguT:vkLya4-bound} and \eqref{sguT:vkLya2-a1-bounded}, we have
\begin{align*}
\breve{z}_{k+1}\le(1-a_1)^{k+1}\breve{z}_0+\frac{a_2\eta^2}{a_1},~\forall k\le T,
\end{align*}
which yields \eqref{sguT:thm-sg-equ2.1bounded}.

From \eqref{sguT:v4k-speed} and \eqref{nonconvex:gg3}, we have
\begin{align}\label{sguT:thm-sg-equ2bounded-proof}
\mathbf{E}_{\mathfrak{F}_k}[W_{4,k+1}]
&\le \Big(1-\frac{\nu\eta}{2}\Big)W_{4,k}+\frac{1}{2}\eta L_f^2\|\bsx_k\|^2_{\bsK}+L_f\sigma^2\eta^2\nonumber\\
&\le \Big(1-\frac{\nu\eta}{2}\Big)^{k+1}W_{4,0}+\frac{1}{\nu}(L_f^2\|\bsx_k\|^2_{ \bsK}+2L_f\sigma^2\eta).
\end{align}

Noting $\eta=1/(T+1)^\theta$, from \eqref{sguT:thm-sg-equ2bounded-proof}, \eqref{sgu:lemma:exponential-equ}, and \eqref{sguT:thm-sg-equ2.1bounded}, we have \eqref{sguT:thm-sg-equ2bounded}.

\subsection{Proof of Theorem~\ref{sgu:thm-sg-diminishingt}}\label{sgu:proof-thm-sg-diminishingt}
In addition to the notations defined in Appendix~\ref{sguT:proof-thm-sg-smT},
we also denote the following notations.
\begin{align*}
\tilde{c}_0(\kappa_1,\kappa_2)&=\max\Big\{4\varepsilon_{11},~\varepsilon_{6},
~\frac{\varepsilon_{10}}{\varepsilon_4}\Big\},\\
\hat{c}_2(\kappa_1)&=\min\Big\{\frac{\varepsilon_1}{\varepsilon_2},
~\frac{\varepsilon_8}{\varepsilon_9},~\frac{1}{5}\Big\},\\
\hat{c}_3(\kappa_0,\kappa_1,\kappa_2)&=\max\Big\{\frac{\tilde{c}_0(\kappa_1,\kappa_2)}{\kappa_0},
~\frac{8L_f\kappa_3}{\nu\kappa_2},~\frac{16L_f(\kappa_3-1)}{\nu\kappa_0\kappa_2}\Big\},\\
\tilde{\sigma}^2&=2L_ff^*-2L_f\frac{1}{n}\sum_{i=1}^{n}f_i^*,\\
\varepsilon_8&=\kappa_1\rho_2(L)-1,\\
\varepsilon_9&=\frac{1}{2}(3\kappa_1+2)\kappa_1\rho(L^2)+\rho(L)+1,\\
\varepsilon_{10}&=\kappa_2(\kappa_3-1)+\kappa_1\kappa_2+\kappa_3-1+3\kappa_2^2,\\
\varepsilon_{11}&=\kappa_2L_f+(2\kappa_3-1+\kappa_2(10\kappa_3-4))L_f^2,\\
\varepsilon_{12}&=3+L_f+\frac{\kappa_3L_f^2}{\kappa_0\kappa_2t_1}
+\frac{2\kappa_4L_f^2}{\kappa_0^2t_1^{2}}+\frac{2+2\kappa_3L_f^2}{\kappa_0t_1^{2}}\\
&\quad+\frac{(\kappa_3-1)L_f^2}{\kappa_0^2\kappa_2t_1^{3}}
+\frac{(\kappa_3-1)L_f^2}{\kappa_0^2t_1^{4}}\Big(\frac{2}{\kappa_0}+2\Big),\\
\varepsilon_{13}&=\frac{\kappa_0\kappa_3}{\kappa_2^2}
+\frac{\kappa_3-1}{\kappa_2^2t_1^{2}},\\
\varepsilon_{14}&=\varepsilon_{12}\sigma^2+\varepsilon_{13}\tilde{\sigma}^2,\\
\varepsilon_{15}&=\frac{1}{\kappa_5}\min\Big\{\frac{\varepsilon_3\kappa_0t_1}{\kappa_2},
~\frac{\varepsilon_4\kappa_0t_1}{2\kappa_2},~\frac{\nu}{8}\Big\},\\
\varepsilon_{16}&=\frac{4L_f\sigma^2\kappa_2^2}{\kappa_0^2(\frac{\nu\kappa_2}{2\kappa_0}-1)}.
\end{align*}

To prove Theorem~\ref{sgu:thm-sg-diminishingt}, we  need the following lemma.
\begin{lemma}\label{sgu:lemma:sg2}
Suppose Assumptions~\ref{sgd:assgraph}--\ref{sgd:ass:stochastic-grad:variance} hold. Suppose $\alpha_k=\kappa_1\beta_k$, $\beta_k=\kappa_0(k+t_1)$, and $\eta_k=\kappa_2/\beta_k$, where $\kappa_0\ge \tilde{c}_0(\kappa_1,\kappa_2)/t_1$, $\kappa_1>c_1$, $\kappa_2\in(0,\hat{c}_2(\kappa_1))$, and $t_1\ge 1$. Then, for any $ k\in\mathbb{N}_0$ the following holds for Algorithm~\ref{sguT:algorithm-sg}
\begin{subequations}
\begin{align}
\mathbf{E}_{\mathfrak{F}_k}[W_{k+1}]
&\le W_{k}-\varepsilon_3\|\bsx_k\|^2_{\bsK}
-\frac{1}{2}\varepsilon_4\Big\|\bm{v}_k+\frac{1}{\beta_k}\bsg_{k}^0\Big\|^2_{\bsK}
-\frac{1}{4}\eta_k\|\bar{\bsg}^0_{k}\|^2\nonumber\\
&\quad
+2L_fb_{8,k}\eta_k^2W_{4,k}+n\varepsilon_{14}\eta_k^2,\label{sgu:vkLya2}\\
\mathbf{E}_{\mathfrak{F}_k}[\breve{W}_{k+1}]
&\le \breve{W}_{k}-\varepsilon_3\|\bsx_k\|^2_{\bsK}
-\frac{1}{2}\varepsilon_4\Big\|\bm{v}_k+\frac{1}{\beta_k}\bsg_{k}^0\Big\|^2_{\bsK}
+n\varepsilon_{14}\eta_k^2\nonumber\\
&\quad+2\varepsilon_{12}L_f^2\eta_k^2\|\bsx_k\|^2_{\bsK}
+2(2\varepsilon_{12}+\varepsilon_{13})L_f\eta_k^2W_{4,k},\label{sgu:vkLya2-bounded}\\
\mathbf{E}_{\mathfrak{F}_k}[W_{4,k+1}]
&\le W_{4,k}-\frac{\eta_k}{4}\|\bar{\bsg}_{k}^0\|^2
+\|\bsx_k\|^2_{\frac{1}{2}L_f^2\eta_k\bsK}
+\eta^2_kL_f\sigma^2,\label{sgu:v4kspeed-diminishing}
\end{align}
\end{subequations}
where $b_{8,k}=\kappa_3\frac{\omega_k}{\eta_k^2}+(\kappa_3-1)\frac{\omega_k^2}{\eta_k^2}$.
\end{lemma}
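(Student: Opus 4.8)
The plan is to follow the proof of Lemma~\ref{sguT:lemma:sg2} almost verbatim, but now carrying along the correction terms produced by $\omega_k=\frac{1}{\beta_k}-\frac{1}{\beta_{k+1}}$, which no longer vanishes since $\{\beta_k\}=\{\kappa_0(k+t_1)\}$ is increasing. First I note $\omega_k=\frac{\kappa_0}{\beta_k\beta_{k+1}}\ge0$, so $\{\beta_k\}$ is non-decreasing and \eqref{sguT:v1k}, \eqref{sguT:v2k}, \eqref{sguT:v3k}, \eqref{sguT:v4k} all apply; moreover $\omega_k\le\frac{\kappa_0}{\beta_k^2}=\frac{\kappa_0}{\kappa_2^2}\eta_k^2\frac{\beta_k}{\beta_{k+1}}\le\frac{\kappa_0}{\kappa_2^2}\eta_k^2$, so every $\omega_k$-weighted term is $\mathcal{O}(\eta_k^2)$. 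I also record the auxiliary bounds used throughout: $\mathbf{E}_{\mathfrak{F}_k}[\|\bar{\bsg}_k^u\|^2]\le2\sigma^2+2\|\bar{\bsg}_k\|^2$ from \eqref{sguT:vkLya-3}; $\|\bar{\bsg}_k\|^2\le2L_f^2\|\bsx_k\|^2_\bsK+2\|\bar{\bsg}_k^0\|^2$ from \eqref{sgu:vkLya-2g0kbar}; $\|\bar{\bsg}_k^0\|^2\le2L_fW_{4,k}$ from \eqref{sgu:vkLya-2gbar0k}; and, applying \eqref{nonconvex:lemma:lipschitz2} to each $f_i$ and summing, $\|\bsg_k^0\|^2=\sum_{i=1}^n\|\nabla f_i(\bar{x}_k)\|^2\le2L_f\sum_{i=1}^n(f_i(\bar{x}_k)-f_i^*)=2L_fW_{4,k}+n\tilde{\sigma}^2$ (with $\tilde{\sigma}^2\ge0$, possibly $+\infty$, in which case the asserted bounds are vacuous).

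For \eqref{sgu:vkLya2} I add \eqref{sguT:v1k}$+$\eqref{sguT:v2k}$+$\eqref{sguT:v3k}$+$\eqref{sguT:v4k}. With $\alpha_k=\kappa_1\beta_k$ the $\bsL$-valued cross terms $\bsx_k^\top\bsL(\bsv_k+\frac1{\beta_k}\bsg_k^0)$ cancel exactly, while the $\bsK$-valued ones leave the residual $\omega_k\kappa_2\,\bsx_k^\top\bsK(\bsv_k+\frac1{\beta_k}\bsg_k^0)$, which I split by Cauchy--Schwarz into $\mathcal{O}(\omega_k)$ multiples of $\|\bsx_k\|^2_\bsK$ and $\|\bsv_k+\frac1{\beta_k}\bsg_k^0\|^2_\bsK$. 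The iteration-$(k+1)$ pieces $\omega_kW_{1,k+1}+\frac12\omega_k\|\bsg_{k+1}^0\|^2$ from \eqref{sguT:v3k} and $\frac12(\frac1{\rho_2(L)}+\kappa_1)(\omega_k+\omega_k^2)\|\bsg_{k+1}^0\|^2$ from \eqref{sguT:v2k} are pushed back to iteration $k$: $W_{1,k+1}$ by its own one-step bound \eqref{sguT:v1k} (contributing a further $\mathcal{O}(\omega_k)$ multiple of iteration-$k$ quantities after taking $\mathbf{E}_{\mathfrak{F}_k}$), and $\|\bsg_{k+1}^0\|^2\le2\|\bsg_{k+1}^0-\bsg_k^0\|^2+2\|\bsg_k^0\|^2\le2\eta_k^2L_f^2\|\bar{\bsg}_k^u\|^2+2\|\bsg_k^0\|^2$ by \eqref{sguT:gg}, then substituting the auxiliary bounds above. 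This produces exactly the coefficient $2L_f(\kappa_3\omega_k+(\kappa_3-1)\omega_k^2)=2L_fb_{8,k}\eta_k^2$ in front of $W_{4,k}$, together with additional $\mathcal{O}(\omega_k)$ contributions to the $\|\bsx_k\|^2_\bsK$ coefficient, to the $\sigma^2$ term, and to a $\tilde{\sigma}^2$ term (the latter accounting for the $\varepsilon_{13}$ part of $\varepsilon_{14}$, since $\kappa_3\omega_k/\eta_k^2$ has leading constant $\kappa_0\kappa_3/\kappa_2^2$).

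Next I collect like terms. The net $\|\bsx_k\|^2$ quadratic form equals $-\eta_k\alpha_k\bsL$ plus $\mathcal{O}(\eta_k^2)$ multiples of $\bsL$, $\bsL^2$ and $\bsK$ (from \eqref{sguT:v1k}--\eqref{sguT:v3k} and the $\omega_k$-corrections) plus $\frac{\eta_k}{2}(1+3\eta_k)L_f^2\bsK+\frac{\eta_k}{2}L_f^2\bsK$; using \eqref{nonconvex:KL-L-eq2} and \eqref{nonconvex:lemma-eq2}, $\alpha_k=\kappa_1\beta_k$, $\eta_k\beta_k=\kappa_2$, $\kappa_1>c_1$, $\kappa_2<\varepsilon_1/\varepsilon_2$ and $\beta_k\ge\kappa_0t_1\ge\tilde{c}_0(\kappa_1,\kappa_2)\ge\varepsilon_6$, this form is $\le-\varepsilon_3\bsK$ with $\varepsilon_3=\varepsilon_1\kappa_2-\varepsilon_2\kappa_2^2>0$. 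The net $\|\bsv_k+\frac1{\beta_k}\bsg_k^0\|^2$ form is $-\eta_k\beta_k\bsK+\frac32\eta_k^2\beta_k^2\bsK$ plus an $\mathcal{O}(1/\beta_k)+\mathcal{O}(1/\beta_k^2)$ positive remainder (from \eqref{sguT:v2k}, the cross-residual, and the $\omega_k\alpha_k$ piece of \eqref{sguT:v3k}); since $\eta_k\beta_k=\kappa_2<\frac15$ the leading part is $-(\kappa_2-\frac52\kappa_2^2)\bsK=-2\varepsilon_4\bsK$, and the requirement $\beta_k\ge\tilde{c}_0(\kappa_1,\kappa_2)\ge\varepsilon_{10}/\varepsilon_4$ absorbs the remainder, leaving $\le-\frac12\varepsilon_4\bsK$. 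For the gradient terms, \eqref{sguT:v4k} contributes $-\frac{\eta_k}{4}\|\bar{\bsg}_k^0\|^2$ (kept) and $-\frac{\eta_k}{4}\|\bar{\bsg}_k\|^2$, the latter absorbing the $\mathcal{O}(\eta_k^2)\|\bar{\bsg}_k\|^2$ terms arising after the $\mathbf{E}_{\mathfrak{F}_k}[\|\bar{\bsg}_k^u\|^2]$ substitutions once $\beta_k\ge\tilde{c}_0(\kappa_1,\kappa_2)\ge4\varepsilon_{11}$; all leftover additive constants collapse to $n(\varepsilon_{12}\sigma^2+\varepsilon_{13}\tilde{\sigma}^2)\eta_k^2=n\varepsilon_{14}\eta_k^2$, giving \eqref{sgu:vkLya2}. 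Inequality \eqref{sgu:vkLya2-bounded} is the identical computation for $\breve{W}_k=W_{1,k}+W_{2,k}+W_{3,k}$: since \eqref{sguT:v4k} is not added, there is no $-\frac{\eta_k}{4}\|\bar{\bsg}_k\|^2$ buffer, so the $\mathcal{O}(\eta_k^2)$ gradient terms are instead bounded via \eqref{sgu:vkLya-2g0kbar} and \eqref{sgu:vkLya-2gbar0k}, producing the explicit extra terms $2\varepsilon_{12}L_f^2\eta_k^2\|\bsx_k\|^2_\bsK$ and $2(2\varepsilon_{12}+\varepsilon_{13})L_f\eta_k^2W_{4,k}$; here the slightly different grouping is what forces $\kappa_2<\varepsilon_8/\varepsilon_9$ to keep the $\|\bsx_k\|^2$ form $\le-\varepsilon_3\bsK$ and the $\|\bsv_k+\frac1{\beta_k}\bsg_k^0\|^2$ form $\le-\frac12\varepsilon_4\bsK$. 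Finally \eqref{sgu:v4kspeed-diminishing} follows verbatim as in part~(iii) of Lemma~\ref{sguT:lemma:sg2} (cf.\ \eqref{sguT:v4k-speed}): substitute $\mathbf{E}_{\mathfrak{F}_k}[\|\bar{\bsg}_k^u\|^2]\le2\sigma^2+2\|\bar{\bsg}_k\|^2$ into \eqref{sguT:v4k} and use $\eta_kL_f\le\frac14$, which holds since $\beta_k\ge\kappa_0t_1\ge\tilde{c}_0(\kappa_1,\kappa_2)\ge4\varepsilon_{11}\ge4\kappa_2L_f$, so $-\frac{\eta_k}{4}\|\bar{\bsg}_k\|^2+\eta_k^2L_f\|\bar{\bsg}_k\|^2\le0$.

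The step I expect to be the main obstacle is the $\omega_k$-correction bookkeeping: reducing the iteration-$(k+1)$ objects $W_{1,k+1}$ and $\|\bsg_{k+1}^0\|^2$ to iteration-$k$ quantities while keeping the $f_i^*$ lower bound flowing cleanly into $\tilde{\sigma}^2$, and then verifying that after all substitutions the accumulated positive coefficients of $\|\bsx_k\|^2_\bsK$ and $\|\bsv_k+\frac1{\beta_k}\bsg_k^0\|^2_\bsK$ stay strictly below $\varepsilon_3$ and $\frac12\varepsilon_4$. This is precisely what pins down the three ceilings on $\kappa_2$ collected in $\hat{c}_2(\kappa_1)$ and the floor $\beta_0=\kappa_0t_1\ge\tilde{c}_0(\kappa_1,\kappa_2)$, and it is where one must be most careful about which $\mathcal{O}(1/\beta_k)$ versus $\mathcal{O}(1/\beta_k^2)$ residual is absorbed by which reserve.
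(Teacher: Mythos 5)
Your proposal follows essentially the same route as the paper's own proof: summing the four descent lemmas \eqref{sguT:v1k}--\eqref{sguT:v4k}, exploiting $\omega_k=\mathcal{O}(\eta_k^2)$, reducing the iteration-$(k+1)$ terms $W_{1,k+1}$ and $\|\bsg^0_{k+1}\|^2$ to iteration-$k$ quantities via \eqref{sguT:gg} and the per-agent bound $\|\bsg^0_k\|^2\le 2L_fW_{4,k}+n\tilde{\sigma}^2$ (which is exactly how $b_{8,k}$, $\tilde{\sigma}^2$ and $\varepsilon_{13}$ arise in the paper), and then verifying that the accumulated positive coefficients are absorbed by the reserves $\varepsilon_3$, $\tfrac12\varepsilon_4$ and $-\tfrac{\eta_k}{4}\|\bar{\bsg}_k\|^2$ under the stated ceilings on $\kappa_2$ and the floor on $\kappa_0 t_1$. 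The only discrepancy is a minor attribution detail (the paper invokes $\kappa_2<\varepsilon_8/\varepsilon_9$ already in part (i) to handle the $\omega_k(\eta_k\bsM_{5,k}-\eta_k^2\bsM_{6,k})$ contribution, not only in part (ii)), which does not affect correctness.
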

\begin{proof}
(i) We have
\begin{align}
&\|\bsg^0_{k}\|^2
=\sum_{i=1}^{n}\|\nabla f_i(\bar{x}_k)\|^2
\le\sum_{i=1}^{n}2L_f(f_i(\bar{x}_k)-f_i^*)
=2L_fn(f(\bar{x}_k)-f^*)+n\tilde{\sigma}^2,\label{sgu:vkLya-2g0k}
\end{align}
where the inequality holds due to \eqref{nonconvex:lemma:lipschitz2}.

From  the Cauchy--Schwarz inequality, \eqref{sguT:gg}, and \eqref{sgu:vkLya-2g0k}, we have
\begin{align}
\|\bsg^0_{k+1}\|^2
&=\|\bsg^0_{k+1}-\bsg^0_{k}+\bsg^0_{k}\|^2
\le2(\|\bsg^0_{k+1}-\bsg^0_{k}\|^2+\|\bsg^0_{k}\|^2)\nonumber\\
&\le 2(\eta^2_kL_f^2\|\bar{\bsg}^u_k\|^2+2L_fW_{4,k}+n\tilde{\sigma}^2).\label{sgu:vkLya-2}
\end{align}

From \eqref{sguT:v1k}, \eqref{sguT:v2k}, \eqref{sguT:v3k}, \eqref{sguT:v4k}, \eqref{sguT:vkLya-3}, \eqref{sgu:vkLya-2}, $\alpha_k=\kappa_1\beta_k$,  and $\eta_k=\kappa_2/\beta_k$, we have
\begin{align}\label{sgu:vkLya-4}
\mathbf{E}_{\mathfrak{F}_k}[W_{k+1}]
&\le W_{k}-\|\bsx_k\|^2_{\eta_k\bsM_{3,k}-\eta_k^2\bsM_{4,k}-\frac{1}{2}\kappa_1\kappa_2\omega_k
+\eta_k\omega_k\bsM_{5,k}
-\eta_k^2\omega_k\bsM_{6,k}}\nonumber\\
&\quad-\Big\|\bm{v}_k+\frac{1}{\beta_k}\bsg_{k}^0\Big\|^2_{b^0_{4,k}\bsK}
-\eta_kb_{5,k}\|\bar{\bsg}_{k}\|^2-\frac{1}{4}\eta_k\|\bar{\bsg}^0_{k}\|^2\nonumber\\
&\quad+\eta_k^2(b_{6,k}+b_{7,k}n)\sigma^2
+\eta_k^2b_{8,k}(2L_fW_{4,k}+n\tilde{\sigma}^2),
\end{align}
where
\begin{align*}
\bsM_{3,k}&=(\alpha_k-\beta_k)\bsL-\frac{1}{2}(2+3L_f^2)\bsK,\\
\bsM_{4,k}&=\beta^2_k\bsL+(2\alpha^2_k+\beta^2_k)\bsL^2+4L_f^2\bsK,\\
\bsM_{5,k}&=\alpha_k\bsL-\frac{1}{2}(1+L_f^2)\bsK,\\
\bsM_{6,k}&=\frac{3}{2}\alpha^2_k\bsL^2+\beta^2_k(\bsL+\kappa_1\bsL^2)+\frac{5}{2}L_f^2\bsK,\\
b^0_{4,k}&=\frac{1}{2}\eta_k(2\beta_k-\kappa_3)
-\frac{5}{2}\eta^2_k\beta^2_k-\frac{1}{2}\omega_k\eta_k(\kappa_3-1)
-\frac{1}{2}\omega_k(\eta_k\alpha_k+\kappa_3-1+3\eta_k^2\beta_k^2),\\
b_{5,k}&=\frac{1}{4}-\eta_kb_{6,k},\\
b_{6,k}&=L_f+\frac{1}{\beta^2_k\eta_k}\kappa_3L_f^2+\frac{2}{\beta^2_k}\kappa_4L_f^2
+2\kappa_3L_f^2\omega_k
+\omega_k\Big(\frac{1}{\beta^2_k\eta_k}+\frac{2}{\beta^2_k}+2\omega_k\Big)(\kappa_3-1)L_f^2,\\
b_{7,k}&=3+2\omega_k.
\end{align*}

From \eqref{nonconvex:KL-L-eq2}, $\alpha_k=\kappa_1\beta_k$, $\kappa_1>1$, $\beta_k\ge\kappa_0t_1\ge\tilde{c}_0(\kappa_1,\kappa_2)\ge\varepsilon_6\ge(2+3L_f^2)/2$, and $\eta_k=\kappa_2/\beta_k$, we have
\begin{align}\label{sgu:vkLya-M1}
\eta_k\bsM_{3,k}\ge\varepsilon_1\kappa_2\bsK.
\end{align}

From \eqref{nonconvex:KL-L-eq2}, $\alpha_k=\kappa_1\beta_k$, $\beta_k\ge(2+3L_f^2)/2>2L_f$, and $\eta_k=\kappa_2/\beta_k$, we have
\begin{align}\label{sgu:vkLya-M2}
\eta_k^2\bsM_{4,k}\le\varepsilon_2\kappa_2^2\bsK.
\end{align}

From \eqref{nonconvex:KL-L-eq2}, $\alpha_k=\kappa_1\beta_k$,  $\beta_k\ge(2+3L_f^2)/2>(1+L_f^2)/2$, and $\eta_k=\kappa_2/\beta_k$, we have
\begin{align}\label{sgu:vkLya-M4}
\eta_k\bsM_{5,k}\ge\varepsilon_8\kappa_2\bsK.
\end{align}

From \eqref{nonconvex:KL-L-eq2}, $\alpha_k=\kappa_1\beta_k$, $\beta_k>2L_f>\sqrt{10}L_f/2$, and $\eta_k=\kappa_2/\beta_k$, we have
\begin{align}\label{sgu:vkLya-M5}
\eta_k^2\bsM_{6,k}\le\varepsilon_9\kappa_2^2\bsK.
\end{align}

From $\alpha_k=\kappa_1\beta_k$, $\beta_k\ge\kappa_3$, and $\eta_k=\kappa_2/\beta_k$, we have
\begin{align}\label{sgu:vkLya-b1}
b^0_{4,k}\ge&b_{4,k},
\end{align}
where $b_{4,k}=\varepsilon_4-\frac{1}{2}\omega_k\eta_k(\kappa_3-1)
-\frac{1}{2}\omega_k(\kappa_1\kappa_2+\kappa_3-1+3\kappa_2^2)$.

From $\kappa_1>c_1=1/\rho_2(L)+1$, we have
\begin{align}
\varepsilon_1>0~\text{and}~
\varepsilon_8>0.\label{sgu:varepsilon1and8}
\end{align}

From \eqref{sgu:varepsilon1and8} and $\kappa_2\in(0,\min\{\frac{\varepsilon_1}{\varepsilon_2},~\frac{\varepsilon_8}{\varepsilon_9},~\frac{1}{5}\})$, we have
\begin{subequations}
\begin{align}
&\varepsilon_3>0,\label{sgu:kappa2-1}\\
&\varepsilon_8\kappa_2-\varepsilon_9\kappa_2^2>0,\label{sgu:kappa2-2}\\
&\varepsilon_4>0.\label{sgu:kappa2-3}
\end{align}
\end{subequations}

From $\beta_k=\kappa_0(k+t_1)$, we have
\begin{align}\label{sgu:omegak}
\omega_k=\frac{1}{\beta_{k}}-\frac{1}{\beta_{k+1}}
=\frac{1}{\kappa_0}\Big(\frac{1}{k+t_1}-\frac{1}{k+t_1+1}\Big)
=\frac{1}{\kappa_0(k+t_1)(k+t_1+1)}
\le\frac{\kappa_0}{\beta_k^2}.
\end{align}

From \eqref{sgu:kappa2-1}--\eqref{sgu:omegak}, and $\kappa_0\ge
\max\{\frac{4\varepsilon_{11}}{t_1},~\frac{\varepsilon_{10}}{\varepsilon_4t_1}\}$, we have
\begin{subequations}
\begin{align}
&b_{4,k}\ge\varepsilon_4-\frac{\varepsilon_{10}}{2\kappa_0t_1^{2}}
\ge\varepsilon_4-\frac{\varepsilon_{10}}{2\kappa_0t_1}\ge\frac{1}{2}\varepsilon_4>0,
\label{sgu:varepsilon4}\\
&b_{5,k}\ge\frac{1}{4}-\frac{\varepsilon_{11}}{\kappa_0t_1}\ge0.
\label{sgu:varepsilon11}
\end{align}
\end{subequations}

From \eqref{sgu:omegak} and $\beta_k=\kappa_0(k+t_1)\ge\kappa_0t_1$, we have
\begin{subequations}
\begin{align}
&b_{6,k}+b_{7,k}\le\varepsilon_{12},\label{sgu:varepsilon12}\\
&b_{8,k}\le\varepsilon_{13}.\label{sgu:varepsilon13}
\end{align}
\end{subequations}

From \eqref{sgu:vkLya-4}--\eqref{sgu:vkLya-b1}, \eqref{sgu:kappa2-1}--\eqref{sgu:kappa2-3}, and \eqref{sgu:varepsilon4}--\eqref{sgu:varepsilon13}, we have \eqref{sgu:vkLya2}.

(ii) Similarly to the way to get \eqref{sgu:vkLya2}, we have
\begin{align}
\mathbf{E}_{\mathfrak{F}_k}[\breve{W}_{k+1}]
&\le \breve{W}_{k}-\varepsilon_3\|\bsx_k\|^2_{\bsK}
-\frac{1}{2}\varepsilon_4\Big\|\bm{v}_k+\frac{1}{\beta_k}\bsg_{k}^0\Big\|^2_{\bsK}
+\varepsilon_{12}\eta_k^2\|\bar{\bsg}_{k}\|^2\nonumber\\
&\quad+2L_f\varepsilon_{13}\eta_k^2W_{4,k}+n\varepsilon_{14}\eta_k^2,~\forall k\in\mathbb{N}_0,\label{sgu:vkLya2-bounded-1}
\end{align}

From \eqref{sgu:vkLya2-bounded-1}, \eqref{sgu:vkLya-2g0kbar}, and \eqref{sgu:vkLya-2gbar0k}, we have \eqref{sgu:vkLya2-bounded}.

(iii) From \eqref{sguT:v4k} and \eqref{sguT:vkLya-3}, we have
\begin{align}\label{sgu:v4kspeed-diminishing-1}
\mathbf{E}_{\mathfrak{F}_k}[W_{4,k+1}]
&\le  W_{4,k}-\frac{\eta_k}{4}\|\bar{\bsg}_{k}\|^2
+\|\bsx_k\|^2_{\frac{1}{2}L_f^2\eta_k\bsK}
-\frac{\eta_k}{4}\|\bar{\bsg}_{k}^0\|^2
+\eta^2_kL_f(\sigma^2+\|\bar{\bsg}_k\|^2).
\end{align}

From $0<\eta_k\le\kappa_2/(\kappa_0t_1)$ and $\kappa_0t_1\ge\tilde{c}_0(\kappa_1,\kappa_2)\ge4\varepsilon_{11}>4\kappa_2L_f$, we have
\begin{align}
\eta_k^2L_f<\frac{1}{4}\eta_k.\label{sgu:v4kspeed-diminishing-1.3}
\end{align}
From \eqref{sgu:v4kspeed-diminishing-1} and \eqref{sgu:v4kspeed-diminishing-1.3}, we have
\eqref{sgu:v4kspeed-diminishing}.
\end{proof}

Now we are ready to prove Theorem~\ref{sgu:thm-sg-diminishingt}.

From $t_1>\hat{c}_3(\kappa_0,\kappa_1,\kappa_2)\ge \tilde{c}_0(\kappa_1,\kappa_2)/\kappa_0$, we have
\begin{align*}
\kappa_0>\frac{\tilde{c}_0(\kappa_1,\kappa_2)}{t_1}.
\end{align*}
Thus, all conditions needed in Lemma~\ref{sgu:lemma:sg2} are satisfied, so \eqref{sgu:vkLya2}--\eqref{sgu:v4kspeed-diminishing} hold.

From \eqref{sgu:vkLya2} and \eqref{nonconvex:gg3}, we have
\begin{align}\label{sgu:vkLya2-pl-0}
\mathbf{E}_{\mathfrak{F}_k}[W_{k+1}]
&\le W_{k}-\varepsilon_3\|\bsx_k\|^2_{\bsK}
-\frac{1}{2}\varepsilon_4\Big\|\bm{v}_k+\frac{1}{\beta_k}\bsg_{k}^0\Big\|^2_{\bsK}
-\frac{\eta_k\nu }{2}W_{4,k}
+2L_fb_{8,k}\eta_k^2W_{4,k}
+n\varepsilon_{14}\eta_k^2\nonumber\\
&= W_{k}-\varepsilon_3\|\bsx_k\|^2_{\bsK}
-\frac{1}{2}\varepsilon_4\Big\|\bm{v}_k+\frac{1}{\beta_k}\bsg_{k}^0\Big\|^2_{\bsK}
+n\varepsilon_{14}\eta_k^2\nonumber\\
&~~~-2\Big(\frac{1 }{4}-\frac{1}{\nu}L_fb_{8,k}\eta_k\Big)\nu\eta_kW_{4,k}
,~\forall k\in\mathbb{N}_0.
\end{align}

From $t_1>\hat{c}_3(\kappa_0,\kappa_1,\kappa_2)\ge 8L_f\kappa_3/(\nu\kappa_2)$, we have
\begin{align}\label{sgu:b6k-1}
\frac{1}{4}-\frac{L_f\kappa_3}{\nu\kappa_2t_1}\ge\frac{1}{8}.
\end{align}

From \eqref{sgu:omegak}, \eqref{sgu:b6k-1}, and $\kappa_0>\tilde{c}_0(\kappa_1,\kappa_2)/t_1\ge16L_f(\kappa_3-1)/(\nu\kappa_2t_1)$, we have
\begin{align}
\frac{1}{4}-\frac{1}{\nu}L_fb_{8,k}\eta_k
&\ge\frac{1}{4}-\frac{L_f\kappa_0\kappa_3}{\nu\kappa_2\beta_k}
-\frac{L_f\kappa_0^2(\kappa_3-1)}{\nu\kappa_2\beta_k^3}\nonumber\\
&\ge\frac{1}{4}-\frac{L_f\kappa_3}{\nu\kappa_2t_1}
-\frac{L_f(\kappa_3-1)}{\nu\kappa_2\kappa_0t_1^{3}}
\ge\frac{1}{8}-\frac{L_f(\kappa_3-1)}{\nu\kappa_2\kappa_0t_1}\ge\frac{1}{16}.
\label{sgu:b6k}
\end{align}

From \eqref{sgu:vkLya2-pl-0}, \eqref{sguT:vkLya3}, and \eqref{sguT:vkLya3.1}, we have
\begin{align}\label{sgu:vkLya2-pl}
\mathbf{E}_{\mathfrak{F}_k}[W_{k+1}]
&\le W_{k}-\frac{\eta_k}{\kappa_5}\min\Big\{\frac{\varepsilon_3}{\eta_k},
~\frac{\varepsilon_4}{2\eta_k},~\frac{\nu}{8}\Big\}W_{k}+n\varepsilon_{14}\eta_k^2\nonumber\\
&\le W_{k}-\varepsilon_{15}\eta_kW_{k}+n\varepsilon_{14}\eta_k^2,~\forall k\in\mathbb{N}_0.
\end{align}

Denote $z_k=\mathbf{E}[W_k]$, $r_{1,k}=\varepsilon_{15}\eta_k$, and $r_{2,k}=n\varepsilon_{14}\eta_k^2$, then from \eqref{sgu:vkLya2-pl}, we have
\begin{align}
z_{k+1}
\le (1-r_{1,k})z_k+r_{2,k},~\forall k\in\mathbb{N}_0.
\label{sgu:vkLya2-pl-z}
\end{align}

From \eqref{sgu:step:eta1t1}, we have
\begin{subequations}
\begin{align}
r_{1,k}&=\eta_k\varepsilon_{15}
=\frac{a_3}{k+t_1},\label{sgu:vkLya2-pl-r1}\\
r_{2,k}&=\eta_k^2\varepsilon_{14}n\sigma^2
=\frac{a_4}{(k+t_1)^{2}},\label{sgu:vkLya2-pl-r2}
\end{align}
\end{subequations}
where $a_3=\kappa_2\varepsilon_{15}/\kappa_0$ and $
a_4=n\kappa_2^2\varepsilon_{14}/\kappa^2_0$.

From \eqref{sguT:vkLya2-pl-r1.1T}, we have
\begin{align}\label{sgu:vkLya2-pl-r1.1}
r_{1,k}\le\frac{\varepsilon_4}{2\kappa_5}\le\frac{1}{80}.
\end{align}

Then, from \eqref{sgu:vkLya2-pl-z}--\eqref{sgu:vkLya2-pl-r1.1} and \eqref{zerosg:serise:lemma:sequence-equ5}, we have
\begin{align}\label{sgu:vkLya2-pl-theta1}
z_{k}\le\phi_1(k,t_1,a_3,a_4,z_0),~\forall k\in\mathbb{N}_+,
\end{align}
where the function $\phi_1$ is defined in \eqref{zerosg:serise:lemma:sequence-equ5-phi3}.

From $\kappa_0\ge\hat{c}_0\nu\kappa_2/4$, we have
\begin{align}\label{sgu:vkLya2-pl-phi2}
\phi_1(k,t_1,a_3,a_4,2,z_0)=
\begin{cases}
  \mathcal{O}(\frac{n}{k}), & \mbox{if } a_3>1, \\
  \mathcal{O}(\frac{n\ln(k-1)}{k}), & \mbox{if } a_3=1, \\
  \mathcal{O}(\frac{n}{k^{a_3}}), & \mbox{if } a_3<1,
\end{cases}
\end{align}
From \eqref{sgu:vkLya2-pl-theta1}, \eqref{sgu:vkLya2-pl-phi2}, and \eqref{sguT:vkLya5}, we know that there exists a constant $c_f>0$ such that
\begin{align}
\mathbf{E}[\|\bsx_k\|^2_{\bsK}+W_{4,k}]\le nc_f.\label{sgu:thm-sg-sm-bounded}
\end{align}

From \eqref{sgu:vkLya2-bounded}, \eqref{sgu:thm-sg-sm-bounded}, \eqref{sgu:vkLya3.2-bound}, and \eqref{sgu:step:eta1t1}, we have
\begin{align}\label{sgu:vkLya4-bound}
\breve{z}_{k+1}\le(1-a_5)\breve{z}_k+\frac{a_6}{(t+t_1)^{2}},
\end{align}
where $a_5=\min\{\varepsilon_{3},~\varepsilon_{4}/2\}/\kappa_5$ and $
a_6=n(2\varepsilon_{12}L_f^2c_f+2(2\varepsilon_{12}+\varepsilon_{13})L_fc_f+\varepsilon_{14})\kappa_2^2/\kappa_0^2$.

From \eqref{sguT:vkLya2-pl-r1.1T}, we have
\begin{align}\label{sgu:vkLya2-a1-bounded}
a_5\le\frac{\varepsilon_4}{2\kappa_5}\le\frac{1}{80}.
\end{align}

From \eqref{sgu:kappa2-1} and \eqref{sgu:kappa2-3}, we know that
\begin{align}\label{sgu:vkLya2-pl-a1a2-bounded}
a_5>0~\text{and}~a_6>0.
\end{align}

From \eqref{sgu:vkLya4-bound}--\eqref{sgu:vkLya2-pl-a1a2-bounded} and \eqref{zerosg:serise:lemma:sequence-equ6}, we have
\begin{align}\label{sgu:lemma:sequence-equ6-bounded}
\breve{z}_{k}\le\phi_2(k,t_1,a_5,a_6,\breve{z}_{0})=\mathcal{O}(\frac{n}{k^{2}}),~\forall k\in\mathbb{N}_+,
\end{align}
where the function $\phi_2$ is defined in \eqref{zerosg:serise:lemma:sequence-equ6-phi4}.

From \eqref{zerosg:serise:lemma:sequence-equ6-phi4}, \eqref{sgu:vkLya3.2-bound}, and \eqref{sgu:lemma:sequence-equ6-bounded}, we have
\begin{align}\label{sgu:vkLya4-bound-brevez}
&\mathbf{E}[\|\bsx_k\|^2_{\bsK}]\le\frac{1}{\kappa_6}\breve{z}_{k}
\le\frac{1}{\kappa_6}\phi_2(k,t_1,a_5,a_6,\breve{z}_{0})=\mathcal{O}(\frac{n}{k^{2}}).
\end{align}

From \eqref{sgu:vkLya4-bound-brevez}, we have \eqref{sgu:thm-sg-diminishing-equ2.1bounded}.

From \eqref{sgu:v4kspeed-diminishing} and \eqref{nonconvex:gg3}, we have
\begin{align}
&\mathbf{E}[W_{4,k+1}]\le (1-\frac{\nu}{2}\eta_k)\mathbf{E}[W_{4,k}]
+\|\bsx_k\|^2_{\frac{1}{2}L_f^2\eta_k\bsK}+L_f\sigma^2\eta^2_k.
\label{sgu:v4kspeed-diminishing-3}
\end{align}

From $\kappa_0<\nu\kappa_2/4$, we have
\begin{align}\label{sgu:vkLya2-pl-a1-3-bounded}
\frac{\nu\kappa_2}{2\kappa_0}>2.
\end{align}

Similar to the way to prove \eqref{zerosg:serise:lemma:sequence-equ5}, from \eqref{sgu:vkLya4-bound-brevez}--\eqref{sgu:vkLya2-pl-a1-3-bounded}, we have
\begin{align}\label{sgu:thm-sg-diminishing-equ2bounded-proof}
\mathbf{E}[f(\bar{x}_{T})-f^*]
\le\frac{\varepsilon_{16}}{n(T+t_1)}+\mathcal{O}(\frac{1}{(T+t_1)^{2}}),
\end{align}
where $\varepsilon_{16}$ is determined by the last terms in \eqref{zerosg:serise:lemma:sequence-equ5-phi3} and \eqref{sgu:v4kspeed-diminishing-3}.

From $\kappa_0\ge\hat{c}_0\nu\kappa_2/4$, we have
\begin{align}\label{sgu:thm-sg-diminishing-equ2bounded-proof2}
\varepsilon_{16}&=\frac{4L_f\sigma^2\kappa_2^2}{\kappa_0^2(\frac{\nu\kappa_2}{2\kappa_0}-1)}
\le\frac{4L_f\sigma^2\kappa_2^2}{\kappa_0^2(\frac{\nu\kappa_2}{2\kappa_0}
-\frac{\nu\kappa_2}{4\kappa_0})}=\frac{16L_f\sigma^2\kappa_2}{\nu\kappa_0}\le\frac{64L_f\sigma^2}{\hat{c}_0\nu^2}.
\end{align}

From \eqref{sgu:thm-sg-diminishing-equ2bounded-proof} and \eqref{sgu:thm-sg-diminishing-equ2bounded-proof2}, we have \eqref{sgu:thm-sg-diminishing-equ2bounded}.

\subsection{Proof of Theorem~\ref{sg:thm-sg-fixed-a5}}\label{sg:proof-thm-sg-fixed-a5}
In addition to the notations defined in Appendices~\ref{sguT:proof-thm-sg-smT} and \ref{sguT:proof-thm-sg-diminishingT},
we also denote the following notations.
\begin{align*}
\varepsilon&=\frac{1}{\kappa_5}\min\Big\{\frac{\varepsilon_3}{\eta},~\frac{\varepsilon_4}{\eta},
~\frac{\nu}{2}\Big\},\\
c_4&=\frac{W_0}{n\kappa_6},\\
c_5&=\frac{\varepsilon_5+3n}{n\varepsilon\kappa_6}.
\end{align*}

From the conditions in Theorem~\ref{sg:thm-sg-fixed-a5}, we know that \eqref{sguT:vkLya2-plT-constant} holds. Thus,
\begin{align}\label{sg:sgproof-vkLya4-fixed-sg-ft-a5}
\mathbf{E}_{\mathfrak{F}_k}[W_{k+1}]
\le W_{k}-\eta\varepsilon W_{k}
+(\varepsilon_5+3n)\sigma^2\eta^2.
\end{align}

Similar to the way to get \eqref{sguT:vkLya2-pl-r1.1T}, we have
\begin{align}\label{sg:sgproof-epsilon-a5}
0<\eta\varepsilon<1.
\end{align}

From \eqref{sg:sgproof-vkLya4-fixed-sg-ft-a5} and \eqref{sg:sgproof-epsilon-a5}, we have
\begin{align}
\mathbf{E}[W_{k+1}]
&\le (1-\eta\varepsilon)\mathbf{E}[W_k]+(\varepsilon_5+3n)\sigma^2\eta^2\nonumber\\
&\le(1-\eta\varepsilon)^{k+1}W_0
+(\varepsilon_5+3n)\sigma^2\eta^2\sum_{\tau=0}^{k}(1-\eta\varepsilon)^\tau\nonumber\\
&\le(1-\eta\varepsilon)^{k+1}W_0
+\frac{\eta(\varepsilon_5+3n)\sigma^2}{\varepsilon}.\label{sg:sgproof-vkLya4-a5}
\end{align}

Hence, \eqref{sg:sgproof-vkLya4-a5} and \eqref{sguT:vkLya5} give \eqref{sg:thm-sg-fixed-equ1-a5}.

\subsection{Proof of Theorem~\ref{sg:thm-sg-fixed}}\label{sg:proof-thm-sg-fixed}
In addition to the notations defined in Appendices~\ref{sguT:proof-thm-sg-diminishingT}, \ref{sguT:proof-thm-sg-smT}, and \eqref{sg:proof-thm-sg-fixed-a5},
we also denote the following notations.
\begin{align*}
\breve{c}_0(\kappa_1,\kappa_2)&=\max\{4\kappa_2\varepsilon_5,~\breve{\varepsilon}_{6}\},\\
\breve{\varepsilon}_6&=\max\{1+3L_f^2,~\kappa_3\},\\
\breve{c}_5&=\frac{3+5\eta}{\varepsilon\kappa_6}
\end{align*}

Without unbiased assumption, we know that \eqref{sguT:v2k2} still holds.
Similar to the way to get \eqref{sguT:v1k}, \eqref{sguT:v3k2}, and \eqref{sguT:v4k},  we have
\begin{subequations}
\begin{align}
\mathbf{E}_{\mathfrak{F}_k}[W_{1,k+1}]
&\le W_{1,k}-\|\bsx_k\|^2_{\eta\alpha\bsL-\frac{\eta}{2}\bsK
-\frac{3\eta^2\alpha^2}{2}\bsL^2-\eta(1+3\eta)L_f^2\bsK}\nonumber\\
&\quad-\eta\beta\bsx^\top_k\bsK\Big(\bm{v}_k+\frac{1}{\beta}\bsg_k^0\Big)
+\Big\|\bm{v}_k+\frac{1}{\beta}\bsg_k^0\Big\|^2_{\frac{3\eta^2\beta^2}{2}\bsK}
+\eta(1+3\eta)n\sigma^2,\label{sg:v1k}\\
\mathbf{E}_{\mathfrak{F}_k}[W_{3,k+1}]
&\le W_{3,k}-\eta\alpha\bm{x}_k^\top\bsL\Big(\bm{v}_k+\frac{1}{\beta}\bsg_{k}^0\Big)
+\|\bm{x}_k\|^2_{\eta(\beta\bsL+\frac{1}{2}\bsK)
+\eta^2(\frac{\alpha^2}{2}-\alpha\beta+\beta^2)\bsL^2+\eta(1+2\eta)L_f^2\bsK}\nonumber\\
&\quad+\frac{\eta}{2\beta^2}(1+3\eta)L_f^2
\mathbf{E}_{\mathfrak{F}_k}[\|\bar{\bsg}^u_{k}\|^2]+\eta(1+2\eta)n\sigma^2
-\Big\|\bm{v}_k+\frac{1}{\beta}\bsg_{k}^0\Big\|^2_{\eta(\beta-\frac{1}{2}
-\eta\beta^2)\bsK},\label{sg:v3k}\\
\mathbf{E}_{\mathfrak{F}_k}[W_{4,k+1}]
&\le W_{4,k}-\frac{\eta}{4}(1-2\eta L_f)\mathbf{E}_{\mathfrak{F}_k}[\|\bar{\bsg}^u_{k}\|^2]
+\|\bsx_k\|^2_{\eta L_f^2\bsK}-\frac{\eta}{4}\|\bar{\bsg}_{k}^0\|^2+n\sigma^2\eta.\label{sg:v4k}
\end{align}
\end{subequations}

Then, similar to the way to get \eqref{sguT:vkLya2}, from \eqref{sguT:v2k2} and \eqref{sg:v1k}--\eqref{sg:v4k}, we have
\begin{align}\label{sg:sgproof-vkLya2}
\mathbf{E}_{\mathfrak{F}_k}[W_{k+1}]
&\le   W_{k}-\|\bsx_k\|^2_{\varepsilon_3\bsK}
-\Big\|\bm{v}_k+\frac{1}{\beta}\bsg_{k}^0\Big\|^2_{\varepsilon_4\bsK}-\frac{1}{4}\eta\|\bar{\bsg}^0_{k}\|^2
+\eta(3+5\eta)n\sigma^2.
\end{align}

Then, similar to the way to get \eqref{sg:thm-sg-fixed-equ1-a5}, from \eqref{sg:sgproof-vkLya2} and \eqref{nonconvex:gg3}, we have \eqref{sg:thm-sg-fixed-equ1}.

\end{document}